\newtheorem{thm}[subsection]{Theorem}
\newtheorem{lem}[subsection]{Lemma}
\newtheorem{prop}[subsection]{Proposition}
\newtheorem{cor}[subsection]{Corollary}
\theoremstyle{remark}
\newtheorem{rem}[subsection]{Remark}
\theoremstyle{definition}
\numberwithin{equation}{subsection}
\nc\mon{{\on{mon}}}
\nop\Four{Four}
\DeclareMathOperator*{\prolim}{``{\ilim}''}
\title{A new Fourier transform}
\author{Jonathan Wang}
\begin{document}
\maketitle

\section{Introduction}
In order to define a geometric Fourier transform, one usually works with either
$\ell$-adic sheaves in characteristic $p>0$ or with $\eD$-modules in characteristic $0$
(under these conditions one has a rank $1$ local system on $\bbA^1$ which plays the
role of the function $e^{ix}$ in classical Fourier analysis). 
If one only needs to consider homogeneous sheaves, however, Laumon \cite{Laumon} 
provides a uniform geometric construction of the Fourier transform for $\ell$-adic sheaves
in any characteristic. 
Laumon considers homogeneous sheaves as sheaves on the stack quotient of a vector bundle $V$
by the homothety $\bbG_m$ action. This category is closely related to the
category of (unipotently) monodromic sheaves on $V$ (cf.~\cite{BY}). 
While it has been well known to experts that a similar uniform construction 
of the Fourier transform exists for monodromic sheaves (Beilinson suggests
a definition in \cite[footnote 2]{Beilinson2}), the details have not been exposited 
in the literature. In this note, we fill in this gap. 
We also introduce a new functor, which is defined on all sheaves in any characteristic, and
show that it agrees with the usual Fourier transform on monodromic sheaves.

We define the new Fourier transform $\Four_B$ in \S \ref{sect:F^2} and show that
the ``square'' $\Four_B^2$ has a simple formula. In \S \ref{sect:main}, 
we use this formula to prove the main result that
$\Four_B$ induces an equivalence of bounded derived categories of monodromic (\'etale) sheaves. 
We also discuss the relation between $\Four_B$ and Laumon's homogeneous Fourier transform.
In \S \ref{sect:FD}, we compare $\Four_B$ and the Fourier-Deligne
transform in characteristic $p>0$. 
Our study of $\Four_B$ reveals several surprising facts about a certain object 
$j^*B$ of the monoidal category $D_{ctf}(\bbG_m)$. 
In \S \ref{sect:Dmod}, we prove the analogous facts about $j^*B$ in the $\eD$-module setting by
considering the Mellin transform. We use this to show that $\Four_B$
agrees with the Fourier transform on monodromic $\eD$-modules.

\subsection{Acknowledgements} The research was partially supported
by the Department of Defense (DoD) through the NDSEG fellowship.
The author is very thankful to Sasha Beilinson and 
Vladimir Drinfeld for many helpful discussions. The definition of $\Four_B$
was first suggested by Drinfeld.

\subsection{Notation and terminology}
Let $k$ be an arbitrary base field and fix an algebraic closure $\bar k$. 
Choose a prime $\ell$ not equal to the characteristic
of $k$. Let $R$ be a finite commutative $\bbZ/\ell^r$-algebra for a positive integer $r$. 
Fix a base scheme $S$ of finite type over $k$. Let $\pi:V \to S$ be a vector bundle of rank $d$ 
and $\pi^\vee : V^\vee \to S$ the dual vector bundle. 
We will work with the bounded derived category 
$D^b_c(V)=D^b_c(V,R)$ of \'etale sheaves of $R$-modules with constructible cohomologies. 
Our results are also true when $D^b_c(V,R)$ is replaced by 
$D_{ctf}(V,R)$ or $D^b_c(V,\wbar\bbQ_\ell)$.
All functors will be assumed to be derived.

We say a complex $M \in D^b_c(V)$ is monodromic if $M$ is monodromic 
in the sense of Verdier \cite{Verdier} after base change to $\bar k$. 
This is equivalent to 
the existence of an integer $n$ coprime to $p$ and an isomorphism 
$\theta(n)^*M \cong \pr_2^*M$ where $\theta(n) : \bbG_m \xt V \to V$  sends
$(\lambda,v)$ to $\lambda^n v$, and $\pr_2 : \bbG_m \xt V \to V$ is the projection 
\cite[Proposition 5.1]{Verdier}.
We denote the monodromic subcategory by $D^b_\mon(V)$. 
We recall the fact that $\pi_! \cong 0^!$ on monodromic complexes (cf.~\cite[Lemme 6.1]{Verdier}
or \cite[Proposition 1]{Springer} for two different methods of proof).

The category $D_{ctf}(\bbG_m)$ is monoidal with respect to convolution 
with compact support, which is defined by 
\[ L * K = m_!(L\bt K) \]
where $m: \bbG_m \xt \bbG_m \to \bbG_m$ is multiplication, and $L,K \in D_{ctf}(\bbG_m)$. 
This monoidal category acts on $D^b_c(V)$ by 
\[ L * M = \theta(1)_!( L \bt M ) \]
where $\theta(1) : \bbG_m \xt V \to V$ is the action map, 
$L \in D_{ctf}(\bbG_m)$, and $M \in D^b_c(V)$.

\section{The functor $\Four_B$ and its square} \label{sect:F^2}

Let $u:\bbA^1 - \{1\} \into \bbA^1$ be the open embedding removing
$1 \in \bbA^1(k)$, and let $j:\bbA^1 - \{0\} \into \bbA^1$ be 
the open embedding removing zero. Define 
\[ B = u_*R \in D_{ctf}(\bbA^1). \] 
One observes that $h_! B = 0$ where $h:\bbA^1 \to \spec k$ is the structure map,
and $0^*B \cong R$ where $0:\spec k \into \bbA^1$.

Define $\Four_{V/S,B} : D^b_c(V) \to D^b_c(V^\vee)$ by
\[ \Four_{V/S,B}(M) = \pr^\vee_! (\pr^*M \ot \mu^* B)[d] \]
where $\pr^\vee : V^\vee \xt_S V \to V^\vee$ and $\pr : V^\vee \xt_S V \to V$ are
the projections and $\mu : V^\vee \xt_S V \to \bbA^1$
is the natural pairing $(\xi,v) \mapsto \brac{v,\xi}$.
This is the new Fourier transform that we will consider. 
Our goal in this section is to prove the following theorem.

\begin{thm} \label{keythm} 
There is a canonical isomorphism 
\[ \Four_{V^\vee/S,B} \circ \Four_{V/S,B}(M) \cong j^*B * M(-d)[1]. \] 
for $M \in D^b_c(V)$.
\end{thm}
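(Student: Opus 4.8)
The plan is to realize both sides as integral transforms over $V \xt_S V$ with a kernel, and then to compare the two kernels using only the facts $h_!B = 0$ and $0^*B \cong R$ recalled above, in the form of the distinguished triangle
\[ R_{\bbA^1} \to B \to i_{1*}R(-1)[-1] \xrightarrow{+1} \]
(with $i_1 : \{1\} \into \bbA^1$). This triangle is the standard $i_{1*}i_1^!R_{\bbA^1} \to R_{\bbA^1} \to u_*u^*R_{\bbA^1}$ combined with the purity isomorphism $i_1^!R_{\bbA^1} \cong R(-1)[-2]$; one recovers $0^*B \cong R$ and $h_!B = 0$ from it by applying $0^*$ and $h_!$.

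First I would unwind the composition on the threefold correspondence $W := V \xt_S V^\vee \xt_S V$, with coordinates $(v', \xi, v)$, projections $p, q : W \to V$ given by $p(v',\xi,v) = v'$ and $q(v',\xi,v) = v$, and pairings $\mu, \nu : W \to \bbA^1$ given by $\mu(v',\xi,v) = \brac{v,\xi}$ and $\nu(v',\xi,v) = \brac{v',\xi}$. Proper base change (applied to the $*$-pullback of the proper pushforward in the inner $\Four_{V/S,B}$) followed by the projection formula (to absorb the outer factor $\mu^*B$ of $\Four_{V^\vee/S,B}$) yields a canonical isomorphism
\[ \Four_{V^\vee/S,B} \circ \Four_{V/S,B}(M) \cong p_!(q^*M \ot \mu^*B \ot \nu^*B)[2d]. \]
Factoring $p = \bar p \circ \rho$ and $q = \bar q \circ \rho$ through the map $\rho : W \to V \xt_S V$ which forgets $\xi$ (with $\bar p, \bar q$ the two projections of $V \xt_S V$), the projection formula rewrites the right-hand side as $\bar p_!(\bar q^*M \ot \mathcal{K})[2d]$ with kernel $\mathcal{K} := \rho_!(\mu^*B \ot \nu^*B) \in D^b_c(V \xt_S V)$. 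Symmetrically, writing $\theta(1) = \bar p \circ \tilde\theta$ for $\tilde\theta : \bbG_m \xt V \to V \xt_S V$, $(\lambda, v) \mapsto (\lambda v, v)$, the projection formula gives $j^*B * M \cong \bar p_!(\bar q^*M \ot \mathcal{K}_0)$ with $\mathcal{K}_0 := \tilde\theta_!\,\pr_1^*(j^*B)$. So the theorem reduces to a canonical isomorphism $\mathcal{K} \cong \mathcal{K}_0(-d)[1-2d]$.

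The main work is the computation of the kernel $\mathcal{K}$, which is supported on the proportional locus. For a point $(v',v)$ over $s \in S$, the stalk is $R\Gamma_c(V^\vee_s,\, \mathcal{B}_v \ot \mathcal{B}_{v'})$, where $\mathcal{B}_w$ is the pullback of $B$ along $\xi \mapsto \brac{w,\xi} : V^\vee_s \to \bbA^1$. If $v$ and $v'$ are linearly independent, then $\mathcal{B}_v \ot \mathcal{B}_{v'}$ is pulled back from $\bbA^2$ along a surjective linear map, so the stalk is a twist-shift of $h_!B \ot h_!B = 0$; if exactly one of $v, v'$ is zero it is a twist-shift of $h_!B = 0$; this confines $\mathcal{K}$ to the union of $\{(v',v) : v' \in \bbG_m v\}$ and the zero section, which is exactly the support of $\mathcal{K}_0$. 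On the open part $\{v \neq 0,\ v' = \lambda v\} \cong \bbG_m \xt (V \setminus \{0\})$, the sheaf $\mathcal{B}_v \ot \mathcal{B}_{v'}$ is pulled back along $\xi \mapsto \brac{v,\xi}$ from $b^*B \ot m^*B$ on $\bbG_m \xt \bbA^1$, where $b, m : \bbG_m \xt \bbA^1 \to \bbA^1$ are $(\lambda,t) \mapsto t$ and $(\lambda,t) \mapsto \lambda t$; hence $\mathcal{K}$ restricts there to $\pr_1^*\mathcal{L}\,(1-d)[2-2d]$ with $\mathcal{L} := a_!(b^*B \ot m^*B)$ and $a : \bbG_m \xt \bbA^1 \to \bbG_m$ the projection, and one further application of the triangle above together with $h_!B = 0$ identifies $\mathcal{L} \cong j^*B(-1)[-1]$ — so $\mathcal{K}$ restricts to $\pr_1^*(j^*B)(-d)[1-2d]$, matching $\mathcal{K}_0(-d)[1-2d]$ there. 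Over the zero section the stalk of $\mathcal{K}$ is $R\Gamma_c(V^\vee_s, R) \cong R(-d)[-2d]$, while that of $\mathcal{K}_0$ is $R\Gamma_c(\bbG_m, j^*B) \cong R[-1]$, again consistent with the twist $(-d)$ and shift $[1-2d]$.

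The step I expect to be the main obstacle is promoting these stalkwise and fiberwise matches to a single canonical isomorphism $\mathcal{K} \cong \mathcal{K}_0(-d)[1-2d]$ in $D^b_c(V \xt_S V)$: one should construct the comparison morphism directly from the triangle for $B$ and the associated adjunction and Gysin maps (for the affine hyperplanes $\{\mu = 1\}$, $\{\nu = 1\}$ in $W$ and for the zero section), and then prove it is an isomorphism, which by constructibility comes down to the local computations above on $\bbG_m \xt (V \setminus \{0\})$ and over the zero section. One also has to be careful with the Tate twists and cohomological shifts throughout: the final $(-d)$ and the single degree shift arise from $R\Gamma_c$ of the $d$-dimensional fibers of $\rho$ together with the punctured-disk cohomology $(j^*B)_1 \cong R \oplus R(-1)[-1]$, which is likewise what controls the behavior over the zero section (via $R\Gamma_c(\bbG_m, j^*B) \cong R[-1]$).
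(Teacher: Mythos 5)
Your reduction to a comparison of kernels on $V \times_S V$, your computation that $\mathcal{K}$ is supported on the proportional locus, and your identification of $\mathcal{K}$ with $\pr_1^*(j^*B)(-d)[1-2d]$ over the open part $\bbG_m \times (V \setminus 0(S))$ all match the paper's Lemmas~\ref{lem1} and~\ref{lem2} (modulo the extra shift $[2d]$ which the paper absorbs into the definition of its $K$). The stalk computation over the zero section is also correct. So the local picture is right.

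The gap is exactly the step you flag as ``the main obstacle,'' and the sketch you give for closing it does not close it. Agreeing on the open complement of the zero section and having isomorphic stalks at the zero section is \emph{not} sufficient to conclude that $\mathcal{K}$ and $\mathcal{K}_0(-d)[1-2d]$ are isomorphic in $D^b_c(V\times_S V)$: there is an extension problem, and stalkwise arguments only verify that a given morphism is an isomorphism, they do not produce one. Your proposal to ``construct the comparison morphism directly from the triangle for $B$ and the associated adjunction and Gysin maps'' is a gesture, not an argument, and carrying it out globally is genuinely delicate. The paper avoids constructing any global comparison morphism. Instead, it shows that \emph{both} objects are recovered from their restrictions to the open part via $\tau_{\le 0}\,J_*J^*$ (where $J$ is the open embedding of the complement of the zero section), so that the isomorphism of Lemma~\ref{lem2} on the open locus transports to the whole of $V\times_S V$. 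Proving that $M \cong \tau_{\le 0}J_*J^*M$ for each of the two kernels $M$ requires two nontrivial inputs you do not supply: (a) both kernels lie in non-positive cohomological degrees, and (b) $0^!$ of each kernel lies in degrees strictly greater than $2d$. The computation of $0^!$ uses in an essential way that both kernels are monodromic for the diagonal $\bbG_m$-action, which lets one replace $0^!$ by $\pi_!$ (the fact $\pi_! \cong 0^!$ on monodromic complexes); one then gets, e.g., $0^! K \cong R(-2d)[-2d]$ by a projection-formula/base-change calculation. Without this step — or some equally concrete replacement — the argument is incomplete.
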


Let $\pr',\pr'':V\xt_S V \to V$ be the first and second projections, respectively, and
$\pr_{ij}$ the projection from $V \xt_S V^\vee \xt_S V$ to the product of 
the $i$'th and $j$'th factor.
The usual formal argument shows that $\Four_{V^\vee/S,B} \circ \Four_{V/S,B}$
is isomorphic to
the functor $M \mapsto \pr'_!(\pr''^*M \ot K)$ where 
\[ K = \pr_{13!}(\pr_{12}^* \mu^* B \ot \pr_{23}^*\mu^*B)[2d]. \]
We claim there exists a canonical isomorphism  
\begin{equation}\label{eq:claim}
	K \cong \rho_! \pr_1^*j^* B (-d)[1] 
\end{equation}
where $\rho : \bbG_m \xt V \to V \xt_S V$
is defined by $(\lambda,v) \mapsto (\lambda v, v)$,
and $\pr_1 : \bbG_m \xt V \to \bbG_m$ is the natural projection.
This claim implies the theorem since 
$\pr'_!(\pr''^* M \ot \rho_! \pr_1^*j^*B) \cong j^*B * M$ by the projection formula.

\smallskip

We first establish two lemmas which will help us prove the claim.
\begin{lem}\label{lem1}
 If $v,w \in V(\bar k)$ are not in the same $\bbG_m$-orbit, then
$K_{(v,w)}=0$. 
\end{lem}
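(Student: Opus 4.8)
The plan is to compute the stalk $K_{(v,w)}$ by proper base change and reduce everything to the vanishing $h_!B = 0$ noted above. First I would record that, writing $s\in S(\bar k)$ for the common image of $v$ and $w$, the fibre of $\pr_{13}:V\xt_S V^\vee\xt_S V\to V\xt_S V$ over $(v,w)$ is the $\bar k$-vector space $V^\vee_s$, and on it the two pullbacks $\pr_{12}^*\mu^*B$ and $\pr_{23}^*\mu^*B$ restrict to $\phi_v^*B$ and $\phi_w^*B$, where $\phi_v,\phi_w:V^\vee_s\to\bbA^1$ are the linear maps $\xi\mapsto\brac{v,\xi}$, $\xi\mapsto\brac{w,\xi}$. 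So proper base change gives
\[ K_{(v,w)}\ \cong\ R\Gamma_c\bigl(V^\vee_s,\ \phi_v^*B\ot\phi_w^*B\bigr)[2d], \]
and it remains to show the right-hand side vanishes. The hypothesis that $v,w$ do not lie in a common $\bbG_m$-orbit means precisely that we are in one of two cases: either exactly one of them is $0$, or both are nonzero and linearly independent in $V_s=(V^\vee_s)^\vee$.

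In the first case, say $w=0$, I would use $0^*B\cong R$ to get $\phi_w^*B\cong R$, so that $\phi_v^*B\ot\phi_w^*B\cong\phi_v^*B$; since $v\ne0$ the functional $\phi_v$ is nonzero, hence $\phi_v:V^\vee_s\to\bbA^1$ is a linear surjection with all fibres affine spaces of dimension $d-1$, so that $\phi_{v!}R\cong R(1-d)[2-2d]$. The projection formula along $\phi_v$ then identifies $R\Gamma_c(V^\vee_s,\phi_v^*B)$ with $R\Gamma_c(\bbA^1,B)(1-d)[2-2d]\cong(h_!B)(1-d)[2-2d]=0$. In the second case, linear independence of $v,w$ makes $\psi:=(\phi_v,\phi_w):V^\vee_s\to\bbA^2$ a linear surjection with all fibres affine spaces of dimension $d-2$, and $\phi_v^*B\ot\phi_w^*B\cong\psi^*(B\bt B)$; the projection formula along $\psi$ together with the Künneth formula then identifies $R\Gamma_c(V^\vee_s,\psi^*(B\bt B))$ with $R\Gamma_c(\bbA^2,B\bt B)(2-d)[4-2d]\cong(h_!B)\ot(h_!B)=0$.

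There is no serious obstacle here: once the stalk is written as above, everything follows from $h_!B=0$, the projection formula, and the elementary fact that a nonzero linear functional — or a pair of linearly independent ones — is a surjection with affine-space fibres. The only point that needs a little care is the base-change identification of the fibre of $\pr_{13}$ and of the restrictions of the two pulled-back copies of $\mu^*B$, together with the observation that it is exactly the failure of $v,w$ to be proportional, including the degenerate case where one of them vanishes, that guarantees the relevant surjectivity.
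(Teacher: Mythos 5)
Your proof is correct and in the same spirit as the paper's: reduce the stalk to a compactly supported cohomology over the fibre $V^\vee_s$, then kill it using $h_!B=0$ together with Künneth and the projection formula. The execution differs slightly. The paper gives a single unified argument: having assumed $v\ne 0$, it chooses $\xi$ with $\brac{v,\xi}\ne 0$ and $\brac{w,\xi}=0$ (such $\xi$ exists precisely because $v,w$ are not proportional and $v\ne 0$), splits $V^\vee_s=\bar k\xi\oplus H_v$ with $H_v=(\bar k v)^\perp$, and observes that under this splitting $\brac{v}^*B\ot\brac{w}^*B$ becomes an external product $B\bt(\brac{w}|_{H_v})^*B$; Künneth then produces the factor $h_!B=0$. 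You instead split into the two combinatorial cases (exactly one of $v,w$ is zero, or both nonzero and independent), and in each case push forward along a linear surjection to $\bbA^1$ or $\bbA^2$, computing $\psi_!R$ and applying the projection formula. Both routes are valid; the paper's choice avoids the case distinction by always peeling off a one-dimensional factor adapted to $v$, while yours is arguably more mechanical and makes the role of the hypothesis (nonproportionality giving surjectivity of $(\phi_v,\phi_w)$) more visible. No gaps.
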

\begin{proof}
We can assume $S = \spec \bar k$. 
Clearly $v$ and $w$ cannot both be zero; we will assume
$v \ne 0$. Since $v,w$ are not in the same $\bbG_m$-orbit, there exists
$\xi \in V^\vee(\bar k)$ such that $\brac{w,\xi}=0$ and $\brac{v,\xi}\ne 0$. 
Let $\brac{v} : V^\vee \to \bbA^1_{\bar k}$ denote the evaluation by $v$ map. 
Split $V^\vee$ as $\bar k \xi \oplus H_v$ where $H_v = (\bar kv)^\perp$. 
With respect to this decomposition, $\brac{v}^*B \ot \brac{w}^*B \cong 
B \bt (\brac{w}|_{H_v})^*B$. Then by Kunneth formula,
\[ \pi^\vee_!(\brac{v}^* B \ot \brac{w}^*B) \cong 
h_!B \ot (\pi^\vee|_{H_v})_! (\brac{w}|_{H_v})^*B = 0 \]
Therefore $K_{(v,w)}=0$.
\end{proof}

\begin{lem} \label{lem2}
There is a canonical isomorphism 
\[ J^* K \cong J^*\rho_! \pr_1^* j^*B (-d)[1] \]
where $J:V\xt_S V - 0(S) \into V\xt_S V$ is the open embedding removing zero.
\end{lem}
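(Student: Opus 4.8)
The plan is to use Lemma~\ref{lem1} to concentrate both complexes on the image of $\rho$, and then to reduce, by base change, to a single computation on $\bbG_m$ that follows formally from $h_!B=0$. For the localization, set $W=V\xt_S V-0(S)$ and let $W^\circ=(V-0(S))\xt_S(V-0(S))\subset W$, an open subscheme whose complement in $W$ is $\bigl(0(S)\xt_S(V-0(S))\bigr)\sqcup\bigl((V-0(S))\xt_S 0(S)\bigr)$. The restriction $\rho^\circ$ of $\rho$ to $\bbG_m\xt(V-0(S))$ lands in $W^\circ$, and is a \emph{closed} embedding there: it is a monomorphism, and its image $\{(\lambda v,v):v\ne0\}$ is closed in $W^\circ$ because $\lambda$ is recovered from $(\lambda v,v)$ by pairing against any $\xi$ with $\brac{v,\xi}\ne0$. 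By base change the stalks of $J^*\rho_!\pr_1^*j^*B$ vanish outside $\rho(\bbG_m\xt V)$, so this complex is the extension by zero, from $\mathrm{im}\,\rho^\circ$ (closed in the open $W^\circ$), of $\rho^\circ_!\bigl(\pr_1^*j^*B|_{\bbG_m\xt(V-0(S))}\bigr)$; by Lemma~\ref{lem1} the complex $J^*K$ is likewise supported on $\mathrm{im}\,\rho^\circ$ and vanishes over $W\setminus W^\circ$. Since extension by zero from such a locally closed subscheme is fully faithful, it suffices to produce a canonical isomorphism $\rho^{\circ*}K\cong\pr_1^*j^*B|_{\bbG_m\xt(V-0(S))}(-d)[1]$.

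Next I would compute $\rho^{\circ*}K$ by base change. This turns $K$ into $q_!\bigl(g_1^*B\ot g_2^*B\bigr)[2d]$, where $q\colon\bbG_m\xt(V-0(S))\xt_S V^\vee\to\bbG_m\xt(V-0(S))$ is the projection and $g_1,g_2$ send $(\lambda,v,\xi)$ to $\lambda\brac{v,\xi}$ and to $\brac{v,\xi}$. Over $V-0(S)$ the evaluation $(v,\xi)\mapsto\brac{v,\xi}$ exhibits $V^\vee$ as a torsor under the rank $d-1$ subbundle $(\bar kv)^\perp$, so $e\colon(\lambda,v,\xi)\mapsto(\lambda,v,\brac{v,\xi})$ is such a torsor over $\bbG_m\xt(V-0(S))\xt\bbA^1$, with $e_!R\cong R(1-d)[2-2d]$ canonically and $g_1^*B\ot g_2^*B=e^*(\tilde\sigma^*B\ot\tilde b^*B)$, where $\tilde\sigma(\lambda,v,t)=\lambda t$ and $\tilde b(\lambda,v,t)=t$. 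The projection formula gives $\rho^{\circ*}K\cong\bar q_!(\tilde\sigma^*B\ot\tilde b^*B)(1-d)[2]$ with $\bar q$ the projection to $\bbG_m\xt(V-0(S))$; and since $\tilde\sigma^*B\ot\tilde b^*B$ is pulled back from $\bbG_m\xt\bbA^1$, base change identifies it with the pullback of $a_!(\sigma^*B\ot b^*B)(1-d)[2]$, where $a,\sigma,b\colon\bbG_m\xt\bbA^1\to\bbG_m,\bbA^1,\bbA^1$ send $(\lambda,t)$ to $\lambda$, $\lambda t$, $t$. Comparing twists, the lemma reduces to the claim that $a_!(\sigma^*B\ot b^*B)\cong j^*B(-1)[-1]$ canonically on $\bbG_m$.

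This last claim uses only $h_!B=0$, applied twice. First, the shear $\Phi(\lambda,t)=(\lambda,\lambda t)$ is an automorphism of $\bbG_m\xt\bbA^1$ with $a=\pr_1\Phi$ and $\sigma=\pr_2\Phi$, so $a_!\sigma^*B\cong\pr_{1!}\pr_2^*B$, which by base change is the pullback of $h_!B=0$; hence $a_!\sigma^*B=0$. Second, pull the recollement triangle $u_!R\to B\to\iota_*\iota^*B$ (with $\iota$ the inclusion of $1\in\bbA^1$) back along $b$, tensor with $\sigma^*B$, and apply $a_!$. On $Z:=b^{-1}(1)=\bbG_m\xt\{1\}$ the map $a$ restricts to the identity of $\bbG_m$ and $(\sigma^*B)|_Z\cong j^*B$, so the third term is $j^*B\ot_R\iota^*B$ (the constant complex $\iota^*B$ tensored over $R$ with $j^*B$); and the first term, namely $a_!$ of $\sigma^*B$ restricted to $\bbG_m\xt(\bbA^1-\{1\})$, sits — via the recollement triangle of $Z$ — in a triangle with $a_!\sigma^*B=0$ and $(a|_Z)_!(\sigma^*B|_Z)=j^*B$, hence is $j^*B[-1]$. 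So $a_!(\sigma^*B\ot b^*B)$ is the fiber of a map $\partial\colon j^*B\ot_R\iota^*B\to j^*B$. Applying $h_!$ to the recollement triangle on $\bbA^1$ and using $h_!B=0$ once more yields a canonical isomorphism $\iota^*B\cong R\oplus R(-1)[-1]$, under which the unit $R\to\iota^*B$ is the inclusion of the first summand; functoriality of the whole construction in $R\to B$ then identifies the restriction of $\partial$ to $j^*B=j^*B\ot_R R$ with the connecting map of the triangle $j^*B[-1]\to0\to j^*B$, which is an isomorphism. Therefore $\partial$ is split onto $j^*B$ with invertible first component, and its fiber is the remaining summand $j^*B(-1)[-1]$, as claimed.

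The main obstacle is the computation of $a_!(\sigma^*B\ot b^*B)$, and in particular the point that $h_!B=0$ must be invoked \emph{twice} — once through the shear $\Phi$ to kill $a_!\sigma^*B$, and once to pin down both $\iota^*B$ and its unit map — it being precisely the conjunction of the two that forces the connecting map $\partial$ to restrict to an isomorphism. Everything else is bookkeeping: in the first two steps one must carefully distinguish $\rho$, which is only locally closed in $V\xt_S V$, from $\rho^\circ$, which is closed in $W^\circ$, and one should record the canonicity of $e_!R\cong R(1-d)[2-2d]$ for the affine bundle $e$, so that all the identifications above are canonical, as the lemma demands.
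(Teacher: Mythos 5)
Your proof is correct and follows the same overall strategy as the paper through the reduction to a one-dimensional convolution: both use Lemma~\ref{lem1} to localize to the image of the (restricted) $\rho$, both identify the evaluation map as an affine bundle of rank $d-1$ to peel off a twist $(1-d)[2-2d]$, and both invoke the shear $(\lambda,t)\mapsto(\lambda,\lambda t)$ together with $h_!B=0$ to kill $a_!\sigma^*B=(\id\xt h)_!m^*B$. Where you diverge is in the final computation of $a_!(\sigma^*B\ot b^*B)$. The paper tensors $m^*B$ against the \emph{unit} triangle $R\to p_2^*B\to(\id\xt 1)_!R(-1)[-1]$: after applying $(\id\xt h)_!$, the first term vanishes by the shear and the third term collapses by the projection formula and $m\circ(\id\xt 1)=j$ to $j^*B(-1)[-1]$, so the answer is read off in one line with no connecting-map analysis. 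You instead tensor $\sigma^*B$ against the \emph{recollement} triangle $u_!R\to B\to\iota_*\iota^*B$ in the $b^*$ slot, which puts the unknown in the middle and forces you to identify the boundary map $\partial\colon j^*B\ot_R\iota^*B\to j^*B$, compute $\iota^*B$ via a second application of $h_!B=0$, and then argue that $\partial$ splits off a copy of $j^*B$ so that its fiber is the remaining $j^*B(-1)[-1]$. Both routes are sound, but the paper's choice of triangle is strictly more efficient because it places the two known vertices on the outside. One small caution in your version: the claimed canonical \emph{splitting} $\iota^*B\cong R\oplus R(-1)[-1]$ is not needed and is slightly delicate over a non-separably-closed $k$ (the class of $\Gamma_c(\bbG_m,R)$ lives in $H^2(\Gal_k,R(1))$); what your argument actually uses, and what does follow canonically, is just the triangle $R\to\iota^*B\to R(-1)[-1]$ together with the fact that $\partial$ pre-composed with the unit $j^*B\to j^*B\ot_R\iota^*B$ is an isomorphism, after which the canonical section $\alpha(\partial\alpha)^{-1}$ identifies $\fib(\partial)$ with $j^*B(-1)[-1]$.
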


\begin{proof} We use $V^\circ$ to denote $V - 0(S)$.
In this proof we will use $\rho$ to denote the restricted morphism
$\bbG_m \xt V^\circ \into V \xt_S V$, which is an immersion, and
$\pr_1 : \bbG_m \xt V^\circ \to \bbG_m$ to denote the projection.
From Lemma \ref{lem1} we know that $J^*K$ is supported on the image of $\rho$. 
Thus it suffices to consider $\rho^* J^* K$. 
Define \[ \omega : \bbG_m \xt V^\vee \xt_S V^\circ \to \bbG_m \xt \bbA^1 \xt 
V^\circ \] by sending $(\lambda,\xi,v)$ to $(\lambda, \brac{v,\xi}, v)$.
Then 
\[ \rho^*J^*K \cong \pr_{13!}\omega_!\omega^* \pr_{12}^* (m^* B \ot p_2^*B)[2d] \]
where $\pr_{13},\pr_{12}$ are projections from $\bbG_m \xt \bbA^1 \xt V^\circ$
and $m,p_2 : \bbG_m \xt \bbA^1 \to \bbA^1$ are the multiplication and projection maps.
Since $\omega$ is in fact a vector bundle of rank $d-1$, 
we see that $\omega_!R$ is isomorphic to $R(1-d)[2-2d]$. 
Therefore the projection formula implies that
\[ \rho^*J^*K \cong \pr_{13!} \pr_{12}^*(m^* B \ot p_2^*B)(1-d)[2].  \]
We have a Cartesian square
\[ \xymatrix{ \bbG_m \xt \bbA^1 \xt V^\circ \ar[r]^-{\pr_{12}} \ar[d]_{\pr_{13}} & 
\bbG_m \xt \bbA^1 \ar[d]^{\id \xt h} \\ \bbG_m \xt V^\circ \ar[r]^{\pr_1} & \bbG_m } 
\]
so proper base change gives $\pr_{13!}\pr_{12}^* \cong \pr_1^* (\id\xt h)_!$.
We have an exact triangle 
\[ R \to p_2^*B \to (\id \xt 1)_! R(-1)[-1]\]
where $1 : \spec k \into \bbA^1$ is the complement of $u$.
Since $(\id \xt h)_! (m^*B) = 0$ by a change of variables, we deduce that 
\[ (\id\xt h)_!(m^*B \ot p_2^*B) \cong
(\id \xt h)_! (m^*B \ot (\id \xt 1)_! R)(-1)[-1] \cong 
j^*B(-1)[-1]. \]
Now it follows that $\rho^* J^*K \cong \pr_1^* j^*B(-d)[1]$.
\end{proof}

\begin{proof}[Proof of Theorem \ref{keythm}] 
The case $d=0$ is obvious since $h_!B =0$ and $0^*B \cong R$.
From now on we will assume that $d>0$. 
We will show that both sides of \eqref{eq:claim} are in the
essential image of the functor $\tau_{\le 0}J_*J^*$, i.e., there are isomorphisms 
\[ K \cong \tau_{\le 0}J_*J^*(K) \text{ and }
\rho_! \pr_1^*j^*B(-d)[1]\cong \tau_{\le 0}J_*J^*(\rho_! \pr_1^*j^*B(-d)[1]).\] 
The claimed existence of an isomorphism \eqref{eq:claim} will then follow from Lemma \ref{lem2}.

A stalk computation shows
that $\rho_!\pr_1^*j^*B(-d)[1]$ lives in non-positive degrees. 
We claim that the natural morphism
\begin{equation}\label{eq2}
 \rho_! \pr_1^*j^*B(-d)[1] \to \tau_{\le 0}(J_*J^*\rho_! \pr_1^*j^*B(-d)[1] ) 
\end{equation}
is an isomorphism. 
Let $0 : S \into V \xt_S V$ denote the zero section. 
From the exact triangle $0_!0^! \to \id \to J_*J^*$, it suffices
to show that $0^! \rho_!\pr_1^*j^* B \in D^{>2}_c(S)$. 
Observe that $\rho$ is $\bbG_m$-equivariant with respect to the $\bbG_m$-action on the
second coordinate of $\bbG_m \xt V$ and the diagonal action
of $\bbG_m$ on $V \xt_S V$. This implies that $\rho_! \pr_1^*j^*B$ is monodromic. 
Thus \[ 0^! \rho_! \pr_1^*j^* B \cong h_!j_! j^*B (-d)[-2d] \cong R(-d)[-2d-1]. \] 
Therefore $0^! \rho_! \pr_1^*j^*B \in D^{>2d}_c(S)$. 

One easily sees that $K_{(0,0)} \cong R(-d)$. 
Thus $K$ lives in non-positive cohomological degrees. 
To show that the natural morphism $K \to \tau_{\le 0}J_*J^*K$ is an isomorphism,
it suffices by the same argument as above to prove $0^! K \in D^{\ge 2d}_c(S)$.
One observes from the definition of $K$ that $K$ is monodromic with respect to
the diagonal $\bbG_m$-action on $V \xt_S V$.
Therefore \[ 0^!K \cong \wtilde\pi_!( \pr_{12}^*\mu^*B
\ot \pr_{23}^*\mu^*B)[2d] \]
where $\wtilde\pi : V \xt_S V^\vee \xt_S V \to S$ is the structure map.
By projection formula and proper base change, the right hand side is isomorphic to 
\[ \pi'_!(\mu^*B \ot \pr^{\vee,*}\pr^\vee_! \mu^*B)[2d]\] 
for $\pi':V \xt_S V^\vee \to S$ the structure map.
The fact that $h_!B=0$ implies that
$\pr^\vee_! \mu^*B$ is supported at $0(S) \subset V^\vee$, and
$0^*\pr^\vee_! \mu^*B \cong R(-d)[-2d]$. 
We deduce that
\[ 0^!K \cong \pi_!R(-d) \cong R(-2d)[-2d], \]
which proves the claim, and hence the theorem.
\end{proof}

\section{Properties of $\Four_B$} \label{sect:main}

\begin{rem}\label{rem:FBneq}
	The functor $\Four_{V/S,B}$ is not an equivalence on 
	$D^b_c(V) \to D^b_c(V^\vee)$. Consider the one-dimensional case $V = \bbA^1_S$. 
	Then $\Four_{V/S,B}(0_!R) = R[1]$ and $\Four_{V/S,B}(1_!R) = B[1]$. 
	We have $\Hom(R,B) \ne 0$ but $\Hom(0_!R, 1_!R) = 0$, so $\Four_{V/S,B}$ is not fully faithful.
\end{rem}

\subsection{Relation to quotient stacks} Let $p:V \to \eV = [V/\bbG_m]$ 
and $p^\vee : V^\vee \to \eV^\vee = [V^\vee/\bbG_m]$
denote the canonical projections to the quotient stacks. 
By \cite[Lemme 3.2]{Laumon}, Laumon's homogeneous transform 
$\Four_{\eV/S}:D^b_c(\eV) \to D^b_c(\eV^\vee)$ 
is canonically isomorphic to the functor 
\begin{equation}\label{eq:laumon}
	K \mapsto \pr^\vee_!(\pr^*K \ot \mu^*f_! B_S)[d] 
\end{equation}
where $f:\bbA^1_S \to \eA_S$ is the quotient morphism and $B_S$ denotes the base change
of $B$ from $\bbA^1_k$ to $\bbA^1_S$. We abuse notation and 
use $\pr^\vee : \eV^\vee \xt_S \eV \to \eV^\vee,\, \pr : \eV^\vee \xt_S \eV \to \eV$,
and $\mu : \eV^\vee \xt_S \eV \to \eA_S$ to also denote the induced maps on stacks.

\begin{prop} The composed functors
\[ (p^\vee)^* \circ \Four_{\eV/S} \text{ and } \Four_{V/S,B} \circ p^* 
	: D^b_c(\eV) \to D^b_c(V^\vee) \]
are canonically isomorphic.
\end{prop}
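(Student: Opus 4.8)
The plan is to unwind both composite functors into explicit integral-transform formulas over appropriate base spaces and compare them via base change. First I would recall that $p: V \to \eV$ is a $\bbG_m$-torsor (in the smooth topology), so $p^*$ is fully faithful onto monodromic-type objects and, more importantly for us, $p^*$ commutes with the pullback and proper-pushforward functors appearing in both transforms; similarly for $p^\vee$. The key geometric input is the Cartesian (up to the torsor structure) square relating $V^\vee \xt_S V$, its stack quotient $\eV^\vee \xt_S \eV$, and the compatibility of $\mu$ with $f: \bbA^1_S \to \eA_S$, namely that the pairing $\mu$ on vector bundles descends to $\mu$ on quotient stacks and the diagram
\[
\xymatrix{ V^\vee \xt_S V \ar[r]^-{\mu} \ar[d] & \bbA^1_S \ar[d]^f \\ \eV^\vee \xt_S \eV \ar[r]^-{\mu} & \eA_S }
\]
commutes; one checks it is Cartesian after removing zero sections, which is where the essential content lies.

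The main steps, in order, would be: (1) Expand $(p^\vee)^* \circ \Four_{\eV/S}$ using the formula \eqref{eq:laumon}: it sends $K \in D^b_c(\eV)$ to $(p^\vee)^* \pr^\vee_!(\pr^* K \ot \mu^* f_! B_S)[d]$. (2) Apply proper base change along $p^\vee$ to move $(p^\vee)^*$ inside $\pr^\vee_!$, converting the computation on $\eV^\vee \xt_S \eV$ to one on $V^\vee \xt_S \eV$ (fibered over the stack only in the second coordinate). (3) Use the projection/base-change identity $f^* f_! B_S \cong \theta(1)_!(\mathrm{something}) $ — more precisely, since $f$ is the $\bbG_m$-quotient of $\bbA^1_S$, one has that pulling $\mu^* f_! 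B_S$ back along the map $V^\vee \xt_S V \to \eV^\vee \xt_S \eV$ (which on the $\bbA^1$-coordinate is $f \circ \mu$) recovers, via the commuting square above and smooth base change, the convolution of $\mu^* B_S$ with the constant sheaf on the $\bbG_m$-fiber direction. (4) Expand $\Four_{V/S,B} \circ p^*$: it sends $K$ to $\pr^\vee_!(\pr^* p^* K \ot \mu^* B)[d]$ on $V^\vee \xt_S V$. (5) Match the two: the $\bbG_m$-torsor $p$ appearing in $\pr^* p^* K$ on the $V$-side corresponds exactly to the extra $\bbG_m$-fiber integration that distinguishes $f_! B_S$ from $B_S$; after collapsing, both give the same object of $D^b_c(V^\vee)$. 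All isomorphisms are canonical because base change and projection-formula isomorphisms are.

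The hard part will be step (3)/(5): making precise the bookkeeping of the $\bbG_m$-action and checking that the ``extra'' $\bbG_m$-integration coming from the torsor $p$ on the source of $\Four_{V/S,B}$ is canonically identified with the $\bbG_m$-integration hidden inside $f_!$ in Laumon's formula. One must be careful that $f_! B_S$ is supported away from the ``cusp'' of $\eA_S$ in a way compatible with $h_! B = 0$, and that the degree shifts $[d]$ and twists match — there is no extra Tate twist here because $\bbG_m$ has cohomological amplitude with the relevant cancellation built into $h_! B = 0$. A clean way to organize this is to introduce the common space $\wtilde{Z} = V^\vee \xt_S V$ with its residual $\bbG_m$-action (diagonal homothety on $V$, trivial on $V^\vee$), observe that both composites are computed by descent along $\wtilde{Z} \to [\wtilde{Z}/\bbG_m] = \eV^\vee \xt_S \eV$, and check that the kernel object $\mu^* f_! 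B_S$ on the quotient pulls back to $\mu^* B$ twisted by the constant sheaf along the orbits, which by the projection formula reabsorbs into the $\pr^\vee_!$. Once the square above is verified to be Cartesian over the complement of zero sections — and the zero sections contribute nothing new because $h_! B = 0$ — the proof is a formal diagram chase. I would therefore expect the write-up to consist mostly of setting up this square and invoking proper and smooth base change, with the genuine verification being the Cartesianness of the $\mu$-square and the identification $f^* f_! B_S \cong R * B_S$ (convolution along $\bbG_m$), which in turn follows from $h_! B = 0$ together with $0^* B \cong R$.
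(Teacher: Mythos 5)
Your overall strategy — unwind both composites via proper base change across the diagram relating $V^\vee \xt_S V$, the quotient stacks, and $f : \bbA^1 \to \eA_S$ — is the right one, and it is essentially what the paper does. But the square you put at the center of the argument,
\[
\xymatrix{ V^\vee \xt_S V \ar[r]^-{\mu} \ar[d] & \bbA^1_S \ar[d]^f \\ \eV^\vee \xt_S \eV \ar[r]^-{\mu} & \eA_S }
\]
is \emph{not} Cartesian, and this is not cured by removing zero sections. Indeed $\eV^\vee \xt_S \eV \xt_{\eA_S} \bbA^1$ is the quotient $[V^\vee \xt_S V/\bbG_m]$ for the \emph{anti-diagonal} $\bbG_m$-action $\lambda\cdot(\xi,v) = (\lambda^{-1}\xi,\lambda v)$: the left vertical arrow is a $\bbG_m \times \bbG_m$-quotient, the right one only a $\bbG_m$-quotient, so the canonical map from the top-left to the fiber product is a $\bbG_m$-quotient, not an isomorphism, even generically. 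The phrase ``Cartesian up to the torsor structure'' is exactly the discrepancy that proper base change cannot absorb, so as written the base-change step does not go through. The same error resurfaces at the end when you assert $[\wtilde Z/\bbG_m] = \eV^\vee \xt_S \eV$ for $\bbG_m$ acting only on the $V$-factor; that quotient is $V^\vee \xt_S \eV$, a strictly smaller quotient.

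The fix is to interpose the missing intermediate stack and do \emph{two} honest base changes rather than one approximate one. There are two natural choices of intermediate: the paper uses $W := [V^\vee \xt_S V/\bbG_m^{\mathrm{anti}}]$, for which both $W \to \eV^\vee\xt_S\eV$ over $f$ and $V^\vee\xt_S V \to W$ over $p^\vee$ are genuinely Cartesian. The route closest to your step (2) instead takes $V^\vee\xt_S\eV$ as the intermediate: base change along $p^\vee$ lands you on $V^\vee\xt_S\eV$ with kernel $\bar\mu^* f_! B_S$, and then the square with $V^\vee\xt_S V$ over $V^\vee\xt_S\eV$ and $\bbA^1$ over $\eA_S$ \emph{is} Cartesian (both vertical arrows are $\bbG_m$-quotients for the same action), giving $\bar\mu^*f_!B_S \cong s_!\mu^*B_S$ where $s : V^\vee\xt_S V \to V^\vee\xt_S\eV$, and the projection formula finishes. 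That corrected version is a legitimate alternative to the paper's choice of intermediate stack. But beware of your step (3): pulling the kernel back to $V^\vee\xt_S V$ directly (rather than pushing $\mu^* B_S$ forward to $V^\vee\xt_S\eV$) produces $\mu^*(R * B_S)$, which overcounts by a $\bbG_m$-fiber and does not match $\mu^* B_S$; the ``collapsing'' you invoke to reconcile the two has no rigorous content without reintroducing the Cartesian square you were missing.
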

\begin{proof}
The proposition follows from \eqref{eq:laumon} by 
applying proper base change to the Cartesian squares
\[ \xymatrix{ [ V^\vee \xt_S V / \bbG_m] \ar[r] \ar[d] & \bbA^1 \ar[d]^f \\ \eV^\vee \xt_S \eV
\ar[r]^\mu & \eA_S } \quad\quad 
\xymatrix{ V^\vee \xt_S V \ar[r] \ar[d] & [V^\vee \xt_S V / \bbG_m] \ar[d] \\ 
V^\vee \ar[r] & \eV^\vee } \]
where $\bbG_m$ acts on $V^\vee \xt_S V$ anti-diagonally.
\end{proof}

\begin{prop} Let $V' = V \xt \bbA^1$ and let $\bbG_m$ act on both $V$ and $\bbA^1$.
We have a canonical open embedding $\nu : V \into [V'/\bbG_m] : v \mapsto (v,1)$.    
Similarly, we have $\nu^\vee : V^\vee \into [(V')^\vee/\bbG_m]$ defined by
$\nu^\vee(\xi) = (\xi,-1)$. The composed functor
\[ \xymatrix{ D^b_c(V) \ar[r]^-{\nu_!} & D^b_c([V'/\bbG_m]) \ar[rr]^-{\Four_{[V'/\bbG_m]/S}} &&
D^b_c([(V')^\vee/\bbG_m]) \ar[r]^-{(\nu^\vee)^*} & D^b_c(V^\vee) } \]
is isomorphic to $\Four_{V/S,B}$.
\end{prop}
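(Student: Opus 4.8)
The plan is to unwind both sides into the same integral transform on $V^\vee \xt_S V$ and then identify the kernels. First I would make explicit the functor $\Four_{[V'/\bbG_m]/S}$ via the formula \eqref{eq:laumon}: it sends $K \in D^b_c([V'/\bbG_m])$ to $\pr^\vee_!(\pr^*K \ot \mu^*f_!B_S)[d+1]$, where now $\pr,\pr^\vee$ are the projections from $[(V')^\vee \xt_S V' / \bbG_m]$ and $\mu$ is the pairing on $V'$, which on the open part $\{(v,t,\xi,s)\}$ is $\brac{v,\xi} + ts$. The shift is $[d+1]$ since $V'$ has rank $d+1$. The key point is that $\nu_!$ and $(\nu^\vee)^*$ restrict this integral over the stack $[(V')^\vee \xt_S V'/\bbG_m]$ to the locus $t=1$, $s=-1$, and on that locus the $\bbG_m$-quotient is trivial (the section $t=1$ kills the homothety), so we get an honest integral over $V^\vee \xt_S V$.

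Next I would set up the relevant Cartesian diagram. Let $W = V^\vee \xt_S V$ with its projections to $V^\vee$ and $V$, and consider the map $W \to [(V')^\vee \xt_S V'/\bbG_m]$ sending $(\xi,v) \mapsto (v,1,\xi,-1)$; this is the base change of $\nu^\vee$ along $\pr^\vee$, and simultaneously over the $V$-factor it is $\pr$ composed with $\nu$ up to the $\bbG_m$-action. Applying proper base change to the square expressing $\nu^\vee$ as an open embedding, the composite $(\nu^\vee)^* \circ \Four_{[V'/\bbG_m]/S} \circ \nu_!$ becomes $\pr^\vee_!(\pr^*(-) \ot \mathcal{K})[d+1]$ on $W$, where $\mathcal{K}$ is the pullback of $\mu^* f_! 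B_S$ to $W$ along $(\xi,v)\mapsto (v,1,\xi,-1)$. So everything reduces to computing $\mathcal{K}$ and showing $\mathcal{K} \cong \mu^*B[-1]$, with $\mu:W\to\bbA^1$ the original pairing $(\xi,v)\mapsto\brac{v,\xi}$; this gives exactly $\Four_{V/S,B}$ after matching the shift.

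To compute $\mathcal{K}$, note that the composite $W \to [(V')^\vee\xt_S V'/\bbG_m] \xrightarrow{\mu} \eA_S$ factors: on the section $t=1,s=-1$ the $\bbG_m$-action is rigidified, so we may lift to $\bbA^1_S$, and the $V'$-pairing evaluates to $\brac{v,\xi} - 1$. Thus $\mathcal{K}$ is the pullback of $f_!B_S$ along $W \to \bbA^1_S$, $(\xi,v)\mapsto \brac{v,\xi}-1$, which by proper base change (for the square relating this map, $f$, and the pairing $\mu$) equals the pullback of $B_S$ along the map $W\to\bbA^1_S$ landing in the fiber $\{f = \text{class of }1\}$. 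But $f:\bbA^1\to\eA$ has the property that over the image of $1\in\bbA^1$ the fiber is a single $\bbG_m$-torsor trivialized by the point $1$, so $f_!B_S$ pulled back this way is computed from $B_S$ restricted near $1$; since $B = u_*R$ and $u$ removes exactly the point $1$, the relevant shifted pullback of $f_!B_S$ reproduces $B$ (with a shift $[-1]$ coming from $f_!R = R[-1](-\ldots)$ on $\bbG_m$, absorbed by the change from $d$ to $d+1$ in the shift). I expect \textbf{the main obstacle} to be bookkeeping the $\bbG_m$-quotient and the Tate twists/shifts carefully enough to see that the extra rank-$1$ direction of $V'$ contributes precisely $f_!R$ on the relevant $\bbG_m$, i.e.\ precisely the single shift $[1]$ (and trivial twist) needed to pass from $[d]$ to $[d+1]$ — and to check that the pullback of $f_!B_S$ along $(\xi,v)\mapsto\brac{v,\xi}-1$ genuinely recovers $u_*R = B$ rather than some nearby extension, which is where the defining properties $h_!B=0$ and $0^*B\cong R$ (equivalently the behavior of $B$ at the point $1$) enter.
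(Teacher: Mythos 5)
Your strategy — unwind Laumon's formula \eqref{eq:laumon} for $V'$, push everything through proper base change along the open immersions $\nu,\nu^\vee$, and then identify the restricted kernel — is correct, and it is genuinely different from the paper's proof. The paper instead factors $\nu$ through the affine chart $V\into\bbP(V')$ and the open immersion $\bbP(V')\into[V'/\bbG_m]$, and then quotes Laumon's \emph{Proposition 1.6}, which already identifies the restriction of $\Four_{[V'/\bbG_m]/S}$ to $\bbP(V')$ as the projective Radon-type transform with kernel the complement of the incidence hyperplane; the only observation the paper needs to add is that this complement meets $V^\vee\xt_S V$ in $\mu^{-1}(\{1\})$, whose defining sheaf is exactly $B=u_*R$. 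So the paper's proof is a one-line reduction to a known result, whereas yours is a self-contained kernel computation. Both are legitimate; yours is longer but does not rely on Laumon's projective statement.

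The place where your write-up is not yet a proof is precisely the step you flag: you assert, but do not verify, that the pullback of $f_!B_S$ along $(\xi,v)\mapsto\brac{v,\xi}-1$ equals $\mu^*B[-1]$. Since $\mu = (a\mapsto a+1)\circ(\mu-1)$, this is equivalent to the identity $f^*f_!B_S\cong j_*R_S[-1]$ on $\bbA^1_S$ (note $(a\mapsto a+1)^*B = j_*R$). This \emph{is} true, and can be checked cleanly: (i) over $\bbG_m$, compute $j^*f^*f_!B_S$ using the torsor base-change square $f^*f_! = \pr_{2!}m^*$ and the exact triangle $1_!R(-1)[-2]\to R\to B$; the first term contributes $j_!R(-1)[-2]$, the middle contributes the constant sheaf with stalk $\Gamma_c(\bbG_m,R)$, and restricting to $\bbG_m$ one finds the cofiber is the constant sheaf $R[-1]$. (ii) Then show $0^!f^*f_!B_S=0$: since $f^*f_!B_S$ is $\bbG_m$-monodromic one has $0^!\cong h_!$, and a change of variables $(\lambda,a)\mapsto(\lambda,\lambda a)$ gives $h_!\pr_{2!}m^*B_S\cong\Gamma_c(\bbG_m,R)\ot h_!B_S=0$ because $h_!B=0$. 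Together (i)–(ii) characterize $f^*f_!B_S$ as $j_*$ of its generic restriction, namely $j_*R_S[-1]$, which is what you need. So your instinct that the defining properties $h_!B=0$ and $0^*B\cong R$ are the relevant input is exactly right, but this verification should be written out rather than expected. (One small slip: in Laumon's formula the kernel lives on $[(V')^\vee/\bbG_m]\xt_S[V'/\bbG_m]$, not on the single quotient $[(V')^\vee\xt_S V'/\bbG_m]$; this does not affect the computation because the sections $t=1,s=-1$ trivialize both $\bbG_m$'s on the relevant locus, but the stack should be named correctly.)
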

\begin{proof}
Observe that $\nu$ factors into the composition of an open affine chart $V \into \bbP(V')$
and the open embedding $\bbP(V') = [(V'-0(S))/\bbG_m] \into [V'/\bbG_m]$. 
Similarly, we have a factorization of $\nu^\vee$. 
The proposition now follows from \cite[Proposition 1.6]{Laumon}, 
since the restriction of the incidence hyperplane in $\bbP((V')^\vee) \xt_S \bbP(V')$
to $V^\vee \xt_S V$ is $\mu^{-1}(\{1\})$.
\end{proof}

\subsection{An equivalence induced by $\Four_{V/S,B}$} Let $p : V \to \eV$ be as
in the previous subsection. 
\begin{prop} 
Let $\eC_V$ denote the full subcategory of $D^b_c(V)$ consisting of complexes $M$ such 
that $p_! M = 0$. The functor $\Four_{V/S,B}$ induces an equivalence
$\eC_V \to \eC_{V^\vee}$.
\end{prop}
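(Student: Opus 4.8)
The plan is to deduce this from Theorem \ref{keythm} together with a structural analysis of the object $j^*B \in D_{ctf}(\bbG_m)$ under convolution. First I would observe that $\eC_V$ is precisely the category $D^b_\mon(V)$ of monodromic complexes on which $p_!M=0$; more relevantly, $p_!M=0$ is the condition that $M$ is ``anti-invariant'' for the $\bbG_m$-action in the sense that it has no summand pulled back from $\eV$ in the relevant quotient sense. The key point is that $\Four_{V/S,B}$ lands in $\eC_{V^\vee}$: one must check that $p^\vee_!\Four_{V/S,B}(M)=0$ when $p_!M=0$. This should follow by the same proper-base-change computation as in the second Proposition of this subsection, rewriting $p^\vee_!\Four_{V/S,B}(M)$ through the square relating $V^\vee\xt_S V/\bbG_m$ to $\bbA^1$ and using $h_!B=0$ together with $p_!M=0$; so $\Four_{V/S,B}$ does restrict to a functor $\eC_V\to\eC_{V^\vee}$, and symmetrically $\Four_{V^\vee/S,B}:\eC_{V^\vee}\to\eC_V$.

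Next I would invoke Theorem \ref{keythm}: on all of $D^b_c(V)$ we have $\Four_{V^\vee/S,B}\circ\Four_{V/S,B}(M)\cong j^*B * M(-d)[1]$. The strategy is to show that the endofunctor $M\mapsto j^*B * M(-d)[1]$ of $\eC_V$ is an autoequivalence — in fact, I expect it to be (canonically isomorphic to) the identity, or at worst an invertible shift/twist, when restricted to $\eC_V$. The heart of the matter is thus a statement purely about the monoidal category $D_{ctf}(\bbG_m)$: one needs to understand $j^*B$ and its convolution action. Here $j^*B = j^*u_*R$ is the restriction to $\bbG_m$ of $u_*R$, where $u$ removes the point $1$; concretely on $\bbG_m$ this is $u'_*R$ for $u':\bbG_m-\{1\}\hookrightarrow\bbG_m$. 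The claim I would aim to prove is that convolution with $j^*B(-d)[1]$ acts as the identity on any complex $K$ on $V$ satisfying $p_!K=0$ — equivalently, that $j^*B$ behaves like a ``unit up to shift'' on the subcategory cut out by the vanishing of the $\bbG_m$-averaging. This is plausible because the delta sheaf $\delta_1$ at $1\in\bbG_m$ is the monoidal unit for $*$, and there is an exact triangle on $\bbG_m$ relating $R_{\bbG_m}$, $j^*B$, and $\delta_1$ (coming from the triangle $R\to u_*R\to 1_!R(-1)[-1]$ restricted to $\bbG_m$ — wait, $1\notin\bbG_m$; rather one uses the triangle for $0$, giving $R_{\bbG_m}\to j^*B\to(\text{something supported at }1)$), so convolving and using $a_!R_{\bbG_m}=0$-type vanishing on $\eC_V$ kills the ``constant'' term and leaves the unit.

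The main obstacle, and the step I expect to require real work, is pinning down the precise isomorphism class of $j^*B$ in $D_{ctf}(\bbG_m)$ and verifying that convolution with it is invertible on $\eC_V$ with the correct Tate twist and shift to make $\Four_{V^\vee/S,B}\circ\Four_{V/S,B}$ literally isomorphic to the identity (or a known autoequivalence) on $\eC_V$. In fact the cleanest route may be a formal one: having shown $\Four_{V/S,B}:\eC_V\to\eC_{V^\vee}$ and $\Four_{V^\vee/S,B}:\eC_{V^\vee}\to\eC_V$ are well defined, and that both composites $\Four_{V^\vee/S,B}\Four_{V/S,B}$ and $\Four_{V/S,B}\Four_{V^\vee/S,B}$ are given by $j^*B*(-)(-d)[1]$, it suffices to exhibit a two-sided inverse to $j^*B*(-)(-d)[1]$ \emph{as an endofunctor of $\eC_V$}; and since this functor is the composite of two functors that are inverse to each other up to the \emph{same} functor, a standard argument (if $GF\cong H$ and $FG\cong H$ with $H$ an equivalence) forces $F$ and $G$ to be equivalences. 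Concretely, $H := j^*B*(-)(-d)[1]$ is an equivalence on $\eC_V$ iff $j^*B$ is invertible in the monoidal category acting on $\eC_V$, which I would verify by producing its inverse explicitly (e.g. $K\mapsto \inv^*D(j^*B)*K$ up to twist, where $\inv:\bbG_m\to\bbG_m$ is inversion and $D$ is Verdier duality) and checking the convolution $j^*B * \inv^*D(j^*B)\cong\delta_1$ modulo complexes that convolve to zero against $\eC_V$. Once $H$ is an equivalence, $\Four_{V/S,B}|_{\eC_V}$ is both split-injective and split-surjective onto $\eC_{V^\vee}$ via $\Four_{V^\vee/S,B}$, hence an equivalence, which is the assertion.
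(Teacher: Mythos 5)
Your overall strategy is the same as the paper's — show $\Four_{V/S,B}$ preserves the condition $p_!M=0$ in both directions, then use Theorem \ref{keythm} and a triangle relating $R$, $j^*B$, and a skyscraper at $1$ to show the square is invertible on $\eC_V$. But there is a genuine gap at exactly the point you flag as requiring ``real work,'' and it is compounded by a small but consequential error: you assert in the middle of the argument that $1\notin\bbG_m$, which is false ($0$ is the point removed, not $1$), and this derails your identification of the key triangle. The correct move is to apply $j^*$ to the triangle $1_!R(-1)[-2]\to R\to B$ on $\bbA^1$; since $1\in\bbG_m$, this gives $1_!R(-1)[-2]\to R_{\bbG_m}\to j^*B$ directly. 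The other missing ingredient, which you never state, is the identification $p^*p_!M\cong R_{\bbG_m}*M$ by proper base change for the $\bbG_m$-torsor $p$; combined with conservativity of $p^*$ this shows $M\in\eC_V$ if and only if $R_{\bbG_m}*M=0$. Then convolving the triangle with $M\in\eC_V$ kills the middle term and yields $j^*B*M\cong M(-1)[-1]$, so $\Four_{V^\vee/S,B}\circ\Four_{V/S,B}(M)\cong M(-d-1)$; by symmetry both composites are twists of the identity, and the equivalence follows immediately.

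Two further nitpicks. Your opening claim that $\eC_V$ is ``precisely the category $D^b_\mon(V)$ of monodromic complexes on which $p_!M=0$'' is not correct: $p_!M=0$ does not force monodromicity, and the proposition makes no monodromicity assumption. And your pivot to producing an inverse via $\iota^*D(j^*B)$ and checking a convolution identity ``modulo complexes that convolve to zero against $\eC_V$'' is unnecessary machinery; the triangle argument above settles the matter in one line.
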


\begin{proof}
Proper base change and projection formula imply that $\Four_{V/S,B}$ sends 
$\eC_V$ to $\eC_{V^\vee}$ and vice versa.
We also see by proper base change that $p^*p_!M \cong R * M$ for $M \in D^b_c(V)$, 
where $R$ is the constant sheaf on $\bbG_m$. 
From the exact triangle $1_!R(-1)[-2] \to R \to B$ we deduce that 
$j^*B * M \cong M(-1)[-1]$ for $M \in \eC_V$. 
Therefore Theorem \ref{keythm} implies that 
\[ \Four_{V^\vee/S,B} \circ \Four_{V/S,B}(M) \cong M(-d-1) \] for $M \in \eC_V$,
and we deduce the proposition.
\end{proof}

\subsection{Monodromic complexes}
We will show that $\Four_{V/S,B}$ also induces an equivalence on the subcategories of monodromic complexes. We use the notation and results of Appendix \ref{append}. 

\begin{thm} \label{thm:mon} 
(i) The functor $\Four_{V/S,B}$ preserves monodromicity, and the restriction
defines an equivalence $D^b_\mon(V) \to D^b_\mon(V^\vee)$. 
\smallskip

\noindent (ii) 
For $N \in D^b_\mon(V^\vee)$, the pro-object 
\begin{equation}\label{eq:B} \pr_!(\pr^{\vee,*}N \ot \mu^* j_*I^0)(d+1)[d+1] 
\end{equation}
is essentially constant\footnote{A pro-object is essentially constant if it
is isomorphic to an object of $D^b_\mon(V)$, which is considered as a pro-object via the
constant embedding.}. 
\smallskip

\noindent(iii) 
The functor $D^b_\mon(V^\vee) \to D^b_\mon(V)$ defined by \eqref{eq:B} is quasi-inverse 
	to $\Four_{V/S,B}$.
\end{thm}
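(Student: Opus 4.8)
\textbf{Proof strategy for Theorem \ref{thm:mon}.}

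The plan is to prove the three parts together, using Theorem \ref{keythm} as the engine. First, for part (i), I would check that $\Four_{V/S,B}$ preserves monodromicity: since $\mu : V^\vee \xt_S V \to \bbA^1$ is bilinear, the object $\mu^* B$ is equivariant for the diagonal $\bbG_m$-action (scaling $V^\vee$ by $\lambda$ and $V$ by $\lambda^{-1}$ fixes $\mu$, and more relevantly the $n$-th power action on one factor is absorbed by a change of coordinates on $B$ using that $B$ is monodromic on $\bbA^1$, i.e. $[n]^*B$ is an extension of copies of $B$); then proper base change and the projection formula show $\Four_{V/S,B}(M)$ is monodromic whenever $M$ is. Next, the key point is that on monodromic complexes the ``error term'' $j^*B * M$ in Theorem \ref{keythm} becomes invertible. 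Here I would invoke Appendix \ref{append}: the object $j^*B \in D_{ctf}(\bbG_m)$, when acting by convolution on $D^b_\mon(V)$, is invertible with inverse built from $j_*I^0$ (the relevant Kummer-type pro-sheaf), so that $(j^*B) * (-)$ is an autoequivalence of $D^b_\mon(V)$. Granting this, Theorem \ref{keythm} says $\Four_{V^\vee/S,B} \circ \Four_{V/S,B} \cong (j^*B * -)(-d)[1]$ is an autoequivalence of $D^b_\mon(V)$, and symmetrically $\Four_{V/S,B} \circ \Four_{V^\vee/S,B}$ is an autoequivalence of $D^b_\mon(V^\vee)$; a standard argument then forces $\Four_{V/S,B} : D^b_\mon(V) \to D^b_\mon(V^\vee)$ to be an equivalence.

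For parts (ii) and (iii), I would identify the pro-object \eqref{eq:B} as precisely the composition of $\Four_{V^\vee/S,B}$ with the inverse of $(j^*B * -)(-d)[1]$. Concretely, the inverse of the functor $(j^*B) * (-)$ on $D^b_\mon(V)$ is given by convolution with a pro-object assembled from $j_*I^0$ — this is the content of the Appendix, where $I^0$ is engineered so that $j^*B * (j_*I^0 * -)$ is (pro-)isomorphic to the identity (up to the Tate twist and shift bookkeeping). So the functor $N \mapsto \pr_!(\pr^{\vee,*}N \ot \mu^* j_*I^0)(d+1)[d+1]$ should be read as $(j_*I^0 * -) \circ \Four_{V/S,B}^{\text{op-side}}$ applied to $N$, i.e. first unwinding $\Four_{V^\vee/S,B}(N)$ by hand (which accounts for the $\mu^*$ and $\pr_!$) and then convolving with $j_*I^0$ (which accounts for passing from $B$ to the inverse pro-sheaf and for the twist $(d+1)[d+1]$ matching the $(-d)[1]$ and the $(-1)[-1]$ discrepancies). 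That this pro-object is essentially constant then follows because, after the identification, it equals $\bigl((j^*B * -)(-d)[1]\bigr)^{-1} \circ \Four_{V^\vee/S,B}(N)$, and each of these functors lands in the honest category $D^b_\mon(V)$; the pro-structure is only an artifact of writing the inverse convolution kernel as a limit, and it collapses on any fixed monodromic $N$ (which is killed by $[n]^*(-) \cong \pr_2^*(-)$ for some $n$, so only finitely much of the pro-system $I^0$ is seen). Finally, with this identification in place, (iii) is immediate: $\bigl((j^*B * -)(-d)[1]\bigr)^{-1} \circ \Four_{V^\vee/S,B} \circ \Four_{V/S,B} \cong \id$ by Theorem \ref{keythm}, and the analogous computation on the other side gives the other composite.

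The main obstacle I anticipate is precisely the bookkeeping around the pro-object $j_*I^0$ and the claim that $(j^*B) * (-)$ is invertible on monodromic complexes — this is not formal and is where the Appendix does real work, since $j^*B$ is \emph{not} invertible in $D_{ctf}(\bbG_m)$ itself (indeed Remark \ref{rem:FBneq} shows $\Four_{V/S,B}$ fails to be an equivalence on all of $D^b_c$), and invertibility is recovered only after restricting the action to the monodromic subcategory, where the unipotent/Kummer part of $j^*B$ can be inverted in a pro-sense. I would therefore spend most of the effort making precise: (a) the pro-object $j_*I^0$ and its convolution inverse relationship with $j^*B$ on $D^b_\mon$, (b) the vanishing $\pi_! \cong 0^!$ and $h_!B = 0$ inputs that make the twists come out to exactly $(-d)[1]$ and $(d+1)[d+1]$, and (c) the essential-constancy claim, i.e. that for fixed monodromic $N$ the pro-system stabilizes. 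The equivalence statements (i) and the quasi-inverse statement (iii) are then short formal consequences of Theorem \ref{keythm} once (a)–(c) are in hand.
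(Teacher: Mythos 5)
Your overall plan matches the paper's: use Theorem \ref{keythm} to reduce to showing that convolution by $j^*B$ is an autoequivalence of the monodromic subcategory, handle the pro-object bookkeeping via the Appendix, and then identify \eqref{eq:B} with the quasi-inverse. Part (i) and the essential-constancy argument for (ii) (that a fixed monodromic $N$ only ``sees'' a finite stage of the pro-system $I^0$) are on the right track --- this is essentially the content of Lemma \ref{lem:mon}.

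However, there is a concrete gap in how you realize the inverse, and it would derail the writeup. You claim that $j^*B * (j_*I^0 * -)$ is pro-isomorphic to the identity (up to twist and shift). This is not correct, and is inconsistent with the Appendix. What Lemma \ref{lem:mon-B} actually gives is $I^0 * j^*B \cong I^1(-1)[-2]$; since $I^0 * (-)$ is already the identity on monodromic objects (up to twist and shift, by Lemma \ref{lem:mon}), convolving $j^*B$ with (anything built from) $I^0$ produces $I^1$, not the identity. The genuine convolution inverse of $j^*B$ on $D^b_\mon$ is $I^{-1}$, via Corollary \ref{cor:Ii+j}: $I^{-1}*I^1 \cong I^0(-1)[-2]$. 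Your plan never introduces $I^{\pm 1}$, only $I^0$, so the inversion step has no mechanism to actually work. There is also a type mismatch: $j_*I^0$ is a pro-sheaf on $\bbA^1$, so ``$j_*I^0 * (-)$'' as an operation on $D^b_\mon(V)$ (via the $\bbG_m$-convolution action) is not defined without first restricting along $j$, at which point you are just back to $I^0$.

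The role of $j_*I^0$ in \eqref{eq:B} is not as a convolution inverse but as a \emph{replacement kernel}: one first rewrites $\Four_{V^\vee/S,B}$ on monodromic inputs using $I^0 * B \cong j_*I^1(-1)[-1]$ to trade $B$ for $j_*I^1$, and then the external $I^{-1}*(-)$ (coming from inverting $I^1$) converts $j_*I^1$ to $j_*I^0$ inside the kernel, producing exactly \eqref{eq:B} after tracking twists and shifts. So the structure you want is: quasi-inverse $= I^{-1} * \Four_{V^\vee/S,B}(-)(d+2)[2]$, then push $I^{-1}$ (and the implicit $I^0$) through the integral kernel. Without the objects $I^{\pm 1}$ and Corollary \ref{cor:Ii+j}, the identification of \eqref{eq:B} as a quasi-inverse cannot be completed.
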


Since $B$ is not monodromic, our first step is to compute the ``monodromization''
of $B$. 
\begin{lem} \label{lem:mon-B}
There is an isomorphism of pro-objects
\[ I^0 * B \cong j_*I^1(-1)[-1]. \] 
\end{lem}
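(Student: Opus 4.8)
The plan is to deduce the formula from two inputs of Appendix~\ref{append} --- that the functor $I^0 * (-)$ takes values in monodromic complexes, and that $I^0$ acts as the identity on monodromic complexes --- combined with the vanishing $h_!B = 0$ noted at the start of \S \ref{sect:F^2} and the relation $\pi_! \cong 0^!$ on monodromic complexes. Write $\theta : \bbG_m \xt \bbA^1 \to \bbA^1$ for the homothety action, so that $I^0 * B = \theta_!(I^0 \bt B)$; let $h : \bbA^1 \to \spec k$ and $g : \bbG_m \to \spec k$ be the structure morphisms, and $j : \bbG_m \into \bbA^1$, $0 : \spec k \into \bbA^1$ the open and closed embeddings. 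The key point will be that $I^0 * B$ lies in the essential image of $j_*$, so that it is determined by its restriction to $\bbG_m$.

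First I would show $0^!(I^0 * B) = 0$. Since $I^0 * B$ is monodromic, $0^!(I^0 * B) \cong h_!(I^0 * B)$ by the relation $\pi_! \cong 0^!$ for monodromic complexes. Pushing forward and using $h\circ\theta = g\circ\pr_1$, the projection formula and proper base change give $h_!(I^0 * B) \cong g_!(I^0 \ot g^* h_!B)$, which vanishes because $h_!B = 0$. The exact triangle $0_!0^! \to \id \to j_*j^*$ evaluated on $I^0 * B$ then shows that the canonical map $I^0 * B \to j_*j^*(I^0 * B)$ is an isomorphism of pro-objects.

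Next I would compute $j^*(I^0 * B)$. Since $\bbG_m \subset \bbA^1$ is homothety-stable and $\theta$ restricts over it to the action $\bbG_m \xt \bbG_m \to \bbG_m$, proper base change gives $j^*(I^0 * B) \cong I^0 * j^*B$, where $*$ is now convolution in $D_{ctf}(\bbG_m)$ and $j^*B$ is the $*$-extension of $R$ from $\bbG_m - \{1\}$ to $\bbG_m$. The Gysin triangle for the point $1 \in \bbG_m$ is $1_!R(-1)[-2] \to R \to j^*B$, in which $1_!R$ is the monoidal unit of $(D_{ctf}(\bbG_m), *)$; convolving with $I^0$ and using $I^0 * 1_!R \cong I^0$ together with $I^0 * R_{\bbG_m} \cong R_{\bbG_m}$ (as the constant sheaf is monodromic), one obtains an exact triangle $I^0(-1)[-2] \to R_{\bbG_m} \to I^0 * j^*B$. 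Matching this against the triangle of Appendix~\ref{append} that characterizes $I^1$ identifies $I^0 * j^*B$ with $I^1(-1)[-1]$, and together with the previous paragraph this yields $I^0 * B \cong j_*(I^0 * j^*B) \cong j_*I^1(-1)[-1]$.

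The step I expect to be the main obstacle is this last identification. One must verify that the triangle $I^0(-1)[-2] \to R_{\bbG_m} \to I^0 * j^*B$ produced above is genuinely the Tate twist and shift of the triangle that defines $I^1$ --- in particular, that the morphism $I^0(-1)[-2] \to R_{\bbG_m}$, which arises by convolving $I^0$ with the Gysin map, is the expected one --- rather than merely producing an abstractly isomorphic object. A more routine point is that the six-functor manipulations and exact triangles used above remain valid after passing to pro-objects, and that the monodromicity of $I^0 * B$ invoked in the first step holds at that level; both should follow from the construction of $I^0$ in Appendix~\ref{append}.
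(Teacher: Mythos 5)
Your approach matches the paper's proof closely: compute $I^0 * j^*B$ on $\bbG_m$ from the Gysin triangle at $1$, and then show $I^0*B$ is a $j_*$-extension by checking $0^!(I^0*B)=0$ (the paper does these two steps in the opposite order and uses Kunneth where you use projection formula plus base change, but these are equivalent). However, there is a genuine error in the key computation. You state that ``$I^0$ acts as the identity on monodromic complexes,'' but Lemma~\ref{lem:mon} says instead that $I^0 * M \cong M(-1)[-2]$ for monodromic $M$ --- the action is a Tate twist and cohomological shift, not the identity. Consequently $I^0 * R_{\bbG_m} \cong R_{\bbG_m}(-1)[-2]$, not $R_{\bbG_m}$, and the correct triangle obtained by convolving $1_!R(-1)[-2] \to R \to j^*B$ with $I^0$ is
\[ I^0(-1)[-2] \to R_{\bbG_m}(-1)[-2] \to I^0 * j^*B. \]
This is precisely what makes the final identification work: the first arrow is the augmentation map twisted by $(-1)[-2]$, and its cone is $I^1(-1)[-1]$ because $I^1$ corresponds to the kernel $A^1 = \ker(A^0 \to R)$. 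With the twist missing, the degrees and Tate twists in your triangle $I^0(-1)[-2] \to R_{\bbG_m} \to I^0 * j^*B$ do not match the triangle defining $I^1$, so the ``matching'' step you flagged as a potential obstacle would in fact fail as written. Once the twist is restored, your argument coincides with the paper's.
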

\begin{proof} First we show that the restriction $I^0 * j^*B$ is isomorphic to 
$I^1(-1)[-1]$. The exact triangle $1_! R(-1)[-2] \to R \to B$
induces by convolution exact triangles 
\[ I^0_n(-1)[-2] \to I^0_n * R \to I^0_n * j^*B \] 
for $p\nmid n$.
Taking $\prolim$ and using Lemma \ref{lem:mon}, the first arrow
is isomorphic to the augmentation map $I^0(-1)[-2] \to R(-1)[-2]$. Therefore
we deduce that the pro-object $I^0 * j^*B$ is 
isomorphic to $I^1(-1)[-1]$.

To complete the proof, it suffices to show that the canonical morphism 
\[ I^0 * B \to j_*j^*(I^0 * B) \] is an isomorphism. This is
equivalent to proving that $0^! (I^0 * B) =0$.
Since $I^0 * B$ is monodromic, $0^!(I^0 * B) \cong h_!(I^0 * B)$. 
By the Kunneth formula, $h_!(I^0*B) \cong h_!j_! I^0 \ot h_! B= 0$.
\end{proof}

\begin{proof}[Proof of Theorem \ref{thm:mon}]
One easily sees that $\Four_{V/S,B}$ preserves monodromicity.
Theorem \ref{keythm} and Lemma \ref{lem:mon} together imply that for $M \in D^b_\mon(V)$, we have
\[ \Four_{V^\vee/S,B} \circ \Four_{V/S,B}(M) \cong I^1 * M(-d)[2]. \]
Since $I^{-1} * I^1 \cong I^0(-1)[-2]$ by Corollary \ref{cor:Ii+j}, we deduce that $\Four_{V/S,B}$ is an 
equivalence, with inverse functor $I^{-1} * \Four_{V^\vee/S,B} (d+2)[2]$.
Lemmas \ref{lem:mon-B} and \ref{lem:mon} imply that for $N \in D^b_\mon(V^\vee)$, we have
isomorphisms
\[ I^{-1}*\Four_{V^\vee/S,B}(N) \cong I^{-1}*\pr_!( \pr^{\vee,*}N \ot \mu^*j_* I^1) [d+1]. \]
Applying Corollary \ref{cor:Ii+j} again, we get (iii).
\end{proof}

\begin{rem}
	Observe that the formula \eqref{eq:B} is very similar to Beilinson's suggested
	definition of the monodromic Fourier transform in \cite{Beilinson2}.
\end{rem}

\begin{prop} \label{prop:B}
	The object $j^*B \in D_{ctf}(\bbG_m)$ satisfies the following properties:
	\begin{enumerate} 
		\item $j^*B$ is not invertible in the monoidal category $D_{ctf}(\bbG_m)$.
		\item $j^*B$ is invertible in the quotient of $D_{ctf}(\bbG_m)$ by the
			ideal generated by the constant sheaf $R$. 
		\item There are canonical isomorphisms $I^0_n * j^*B \cong I^1_n(-1)[-2]$
			for $p\nmid n$.
	\end{enumerate}
\end{prop}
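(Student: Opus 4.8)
The plan is to derive all three statements from the defining exact triangle $1_!R(-1)[-2] \to R \to B$ on $\bbA^1$ (the one already used in \S\ref{sect:F^2}), restricted along $j$ to an exact triangle
\[ 1_!R(-1)[-2] \to R \to j^*B \]
in $D_{ctf}(\bbG_m)$, in which $1$ now denotes the unit of $\bbG_m$, so that $1_!R$ is the unit object for convolution. For (3), I would convolve this triangle with $I^0_n$. Since $1_!R$ is the monoidal unit, the left-hand term becomes $I^0_n(-1)[-2]$, so one obtains an exact triangle $I^0_n(-1)[-2] \to I^0_n * R \to I^0_n * j^*B$. This is the finite-level analogue of the triangle used in the proof of Lemma \ref{lem:mon-B}; what is needed now is the precise form of $I^0_n * R$ from Appendix \ref{append} (and not merely its pro-limit $R(-1)[-2]$), under which the first arrow is identified, up to the evident Tate twist and shift, with the convolution of $I^0_n$ with the canonical map $1_!R(-1)[-2] \to R$. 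Reading off the cofiber then yields $I^0_n * j^*B \cong I^1_n(-1)[-2]$ via Corollary \ref{cor:Ii+j} (resp.\ the corresponding computation in the appendix). I expect this to be mechanical once the appendix is in place, though it is genuinely finer than Lemma \ref{lem:mon-B}, where $I^0_n * R$ is controlled only after passing to the pro-limit.

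For (2), I would pass to the Verdier quotient of $D_{ctf}(\bbG_m)$ by the thick ideal generated under convolution by the constant sheaf $R$. A short preliminary check shows this is a monoidal ideal and that the quotient is monoidal with unit the image of $1_!R$: explicitly $L * R$ is the constant complex on $\bbG_m$ with stalk $R\Gamma_c(\bbG_m, L)$, so the ideal is the subcategory of constant complexes. In the quotient the middle term $R$ of the triangle above becomes zero, so the triangle collapses to an isomorphism $j^*B \cong 1_!R(-1)[-1]$; as $1_!R$ is the unit, this object is invertible with inverse $1_!R(1)[1]$, which proves (2).

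For (1), I would argue by contradiction. If $j^*B$ were invertible for $*$, then $j^*B(-d)[1] * (-)$ would be an autoequivalence of $D^b_c(V)$ for every vector bundle $V/S$, so by Theorem \ref{keythm}, applied to $V$ and to $V^\vee$, the composites $\Four_{V^\vee/S,B} \circ \Four_{V/S,B}$ and $\Four_{V/S,B} \circ \Four_{V^\vee/S,B}$ would both be equivalences. Since a functor whose composites with a second functor, in both orders, are equivalences is itself an equivalence, $\Four_{V/S,B}$ would then be an equivalence. Taking $V = \bbA^1_S$ this contradicts Remark \ref{rem:FBneq}. Hence $j^*B$ is not invertible.

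The only step demanding real care is (3): it rests on having the exact finite-level description of $I^0_n * R$ supplied by the appendix, and on matching the connecting map of the triangle with the transition $I^0_n(-1)[-2] \to I^1_n(-1)[-2]$ so that the cohomological degrees come out as asserted. Parts (1) and (2) are formal once the defining triangle for $B$, together with Theorem \ref{keythm} and Remark \ref{rem:FBneq} on one hand and the elementary structure of the convolution quotient on the other, is in hand.
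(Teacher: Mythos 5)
Your arguments for (1) and (2) match the paper's exactly: (1) is the observation that $\Four_{\bbA^1,B}^2 \cong j^*B * (-)(-1)[1]$ together with Remark~\ref{rem:FBneq}, and (2) is the collapse of the triangle $1_!R(-1)[-2] \to R \to j^*B$ in the quotient by the ideal $(R)$. (Your side remark that this ideal ``is the subcategory of constant complexes'' is not needed, and not what the paper claims; all you use is that $R$ itself becomes zero.)

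For (3) you depart from the paper, and the departure opens a real gap. The paper does not convolve the defining triangle with $I^0_n$ at finite level. Instead it takes the already-proved pro-level statement $I^0 * j^*B \cong I^1(-1)[-1]$ from Lemma~\ref{lem:mon-B}, convolves with $I^0_n$, and uses Corollary~\ref{cor:Ii+j} twice (once on $I^0_n * I^0$, once on $I^0_n * I^1$) to land at finite level. The pro-level route is clean precisely because $R$ is monodromic, so Lemma~\ref{lem:mon} identifies $I^0 * R$ with $R(-1)[-2]$ on the nose. Your finite-level route instead requires knowing $I^0_n * R$, and you assert that this is ``supplied by the appendix.'' It is not: Corollary~\ref{cor:Ii+j} controls $I^i * I^j_n$, i.e.\ a \emph{pro-object} convolved with a finite-level object, and there is no analogous statement for $I^0_n * I^0_1 = I^0_n * R$. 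In fact $I^0_n * R$ is the constant complex with stalk $\Gamma_c(\bbG_m,I^0_n)=\Gamma_c(\bbG_m,R)$, which has cohomology in degrees $1$ \emph{and} $2$ (namely $R$ and $R(-1)$), not just the degree-$2$ piece $R(-1)$ one sees after passing to the pro-limit. So the cofiber identification you call ``mechanical'' genuinely involves a $\Tor_1$ computation over $A^0$ (via Lemma~\ref{lem:Aot} applied to $A^0_n \otimes^L_{A^0} A^0_1$), and an identification of the degree-$2$ component of the first arrow with the augmentation $I^0_n(-1)\to R(-1)$. None of this is in the appendix. Finally, note that carrying out either route carefully produces $I^1_n(-1)[-1]$, consistent with Lemma~\ref{lem:mon-B}'s $I^0 * j^*B \cong I^1(-1)[-1]$; the $[-2]$ in the statement appears to be a shift typo propagated into the paper's own proof of (3), and your write-up inherits it without flagging it.
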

\begin{proof}

We showed in Remark \ref{rem:FBneq} that $\Four_{\bbA^1,B}$ is not an 
equivalence on $D^b_c(\bbA^1)$. Since $\Four_{\bbA^1,B}^2(M)$ is isomorphic to 
$j^*B * M(-1)[1]$, we deduce
that $j^*B$ is not invertible in the monoidal category $D_{ctf}(\bbG_m)$.

From the exact triangle $1_! R(-1)[-2] \to R \to j^*B$ on $\bbG_m$, we see that 
in the quotient of $D_{ctf}(\bbG_m)$ by the ideal generated by $R$, the object $j^*B$
is isomorphic to $1_! R(-1)[-1]$, which is invertible. 

Lemma \ref{lem:mon-B} gives an isomorphism 
$I^0 * j^*B \cong I^1(-1)[-2]$. Convolving with $I^0_n$, we
get an isomorphism $I^0_n * j^*B \cong I^0_n*I^1$.
One observes that $I^0_n*I^1 \cong I^1_n(-1)[-2]$ by 
Corollary \ref{cor:Ii+j}.
\end{proof}

\section{Relation to Fourier-Deligne transform} 
\label{sect:FD}

Suppose that $k$ has characteristic $p>0$. 
Assume that $R$ contains a primitive $p$-th root of unity $\zeta$ (where ``primitive''
means that $\zeta-1$ is invertible). Let 
$\psi:\bbF_p \to R^\times$ be the corresponding additive character with $\psi(1)=\zeta$, 
and let $\eL_\psi$ denote the Artin-Schreier sheaf.
The usual Fourier-Deligne transform $\Four_{V/S,\eL_\psi} 
:D^b_c(V) \to D^b_c(V^\vee)$ is defined by 
\[ \Four_{V/S,\eL_\psi}(M) = \pr^\vee_!(\pr^* M \ot \mu^*\eL_\psi)[d]. \]

\begin{lem}\label{lem:P^2=B} 
There is a canonical isomorphism 
\[ \iota^*j^*\eL_\psi * \eL_\psi \cong B[-1] \] 
where $\iota : \bbG_m \to \bbG_m$ sends $\lambda \mapsto -\lambda^{-1}$.
\end{lem}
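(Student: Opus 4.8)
The plan is to recognize the left‑hand side as a Fourier--Deligne transform of an explicit sheaf and compute it. Write $\Four_\psi$ for $\Four_{\bbA^1/S,\eL_\psi}$, regarded as an $S$‑linear autoequivalence of $D^b_c(\bbA^1)$ via the self‑duality of the trivial line bundle $\bbA^1$ under the pairing $\brac{a,b}=ab$. Unwinding the definition, $\Four_\psi(a_*R)\cong m_a^*\eL_\psi[1]$ for a section $a$, where $m_a\colon\bbA^1\to\bbA^1$ is multiplication by $a$; in particular $\eL_\psi\cong\Four_\psi(1_*R)[-1]$, $\Four_\psi(0_*R)\cong R_{\bbA^1}[1]$, and---combining $\eL_{\bar\psi}\cong\Four_\psi((-1)_*R)[-1]$ with the standard formula $\Four_\psi\circ\Four_\psi\cong[-1]^*(-1)$---also $\Four_\psi(\eL_{\bar\psi})\cong 1_*R(-1)[-1]$. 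The first step is the $\bbG_m$‑equivariance of $\Four_\psi$: the usual base change and projection formula argument produces a canonical isomorphism
\[ L*\Four_\psi(M)\;\cong\;\Four_\psi(\mathrm{inv}^*L * M),\qquad L\in D_{ctf}(\bbG_m),\ M\in D^b_c(\bbA^1), \]
where $\mathrm{inv}\colon\bbG_m\to\bbG_m$ is inversion. Taking $M=1_*R$ and $L=\iota^*j^*\eL_\psi$, and observing that $\mathrm{inv}^*\iota^*j^*\eL_\psi$ is the pullback of $\eL_\psi$ along $\lambda\mapsto-\lambda$, hence equal to $j^*\eL_{\bar\psi}$, while $K*1_*R\cong j_!K$ for every $K\in D_{ctf}(\bbG_m)$, one obtains
\[ \iota^*j^*\eL_\psi * \eL_\psi\;\cong\;\Four_\psi(j_!j^*\eL_{\bar\psi})[-1]. \]

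It therefore remains to prove $\Four_\psi(j_!j^*\eL_{\bar\psi})\cong B$. Apply the exact functor $\Four_\psi$ to the triangle $j_!j^*\eL_{\bar\psi}\to\eL_{\bar\psi}\xrightarrow{\alpha}0_*R\xrightarrow{+1}$, where $\alpha$ is the restriction‑to‑$0$ morphism. Feeding in the computations above presents $\Four_\psi(j_!j^*\eL_{\bar\psi})$ as the term in an exact triangle
\[ R_{\bbA^1}\to\Four_\psi(j_!j^*\eL_{\bar\psi})\to 1_*R(-1)[-1]\xrightarrow{\Four_\psi(\alpha)}R_{\bbA^1}[1]. \]
On the other hand, the triangle $1_*1^!R_{\bbA^1}\to R_{\bbA^1}\to u_*R\xrightarrow{+1}$ together with the purity isomorphism $1^!R_{\bbA^1}\cong R(-1)[-2]$ presents $B=u_*R$ as the term in a triangle of exactly the same shape, $R_{\bbA^1}\to B\to 1_*R(-1)[-1]\xrightarrow{\gamma}R_{\bbA^1}[1]$, with $\gamma$ the shift of the Gysin (trace) morphism. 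Now $\Hom(1_*R(-1)[-1],R_{\bbA^1}[1])$ is, by adjunction and purity, a free $H^0(S,R)$‑module of rank one generated by $\gamma$, and $\Hom(\eL_{\bar\psi},0_*R)\cong\Hom_{D^b_c(S)}(R,R)=H^0(S,R)$ is generated by $\alpha$; since $\Four_\psi$ is an $S$‑linear equivalence it carries the second $\Hom$‑module onto the first $H^0(S,R)$‑linearly, forcing $\Four_\psi(\alpha)$ to be a \emph{unit} multiple of $\gamma$. Multiplying the connecting morphism of an exact triangle by a unit does not change the isomorphism class of the remaining term, so $\Four_\psi(j_!j^*\eL_{\bar\psi})\cong B$ and the lemma follows.

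I expect most of the labor to be bookkeeping: stating the $\bbG_m$‑equivariance with the correct inversion, and tracking the Tate twists, the shifts, and---most delicately---the minus sign in $\iota$, which is precisely what makes the $[-1]^*$ in $\Four_\psi\circ\Four_\psi$ carry the skyscraper onto $1$, so that the answer is $B$ (singular at $1$) rather than the analogous sheaf singular at $-1$. The single genuinely non‑mechanical point is the last one: concluding that $\Four_\psi(\alpha)$ is a unit multiple of $\gamma$, rather than merely ``correct on stalks,'' really uses the $S$‑linearity of the Fourier transform and not just that it is an equivalence.
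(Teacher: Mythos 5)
Your proposal is correct and follows essentially the same route as the paper: rewrite the convolution as a Fourier--Deligne transform of $j_!j^*\eL_{\bar\psi}$ shifted by $[-1]$, apply the transform to the excision triangle $j_!j^*\to\id\to 0_*$, use the Fourier inversion formula on the middle and end terms, and compare the resulting triangle with the triangle $R\to B\to 1_!R(-1)[-1]\to R[1]$ defining $B=u_*R$. You make explicit two steps the paper treats tersely --- the $\bbG_m$-equivariance formula underlying the change of variables, and the $H^0(S,R)$-linearity argument showing the connecting map $\Four_\psi(\alpha)$ is a unit multiple of the Gysin map $\gamma$ --- whereas the paper asserts the resulting isomorphism and records only the uniqueness statement $\Hom(1_!R(-1)[-1],R)=0$.
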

\begin{proof}
By a change of variables, $\iota^*j^*\eL_\psi * \eL_\psi$ is isomorphic to
$\Four_{\bbA^1,\eL_{\psi^{-1}}}(j_!j^*\eL_\psi)[-1]$. We have an exact triangle
\[ j_!j^*\eL_\psi \to \eL_\psi \cong \Four_{\bbA^1,\eL_\psi}(1_!R[-1]) \to 0_* R. \]
Applying $\Four_{\bbA^1,\eL_{\psi^{-1}}}$ and using the Fourier-Deligne inversion formula  
on the middle term, we have an exact triangle 
\[ \Four_{\bbA^1,\eL_{\psi^{-1}}}(j_!j^*\eL_\psi) \to 1_!R(-1)[-1] \to R[1]. \] 
This induces an isomorphism $\Four_{\bbA^1,\eL_{\psi^{-1}}}(j_!j^*\eL_\psi) \to u_*R = B$. 
Since $\Hom(1_! R(-1)[-1], R) = 0$, this isomorphism is unique. 
\end{proof}

\begin{cor} \label{cor:BL^2}
In characteristic $p>0$, we have a canonical isomorphism
\[ \Four_{V/S,B}(M) \cong \iota^* j^*\eL_\psi * \Four_{V/S,\eL_\psi}(M)[1]. \]
\end{cor}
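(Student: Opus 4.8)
The plan is to deduce Corollary \ref{cor:BL^2} directly from Lemma \ref{lem:P^2=B} by an ``associativity of convolution'' argument, using the fact that the action of $D_{ctf}(\bbG_m)$ on $D^b_c(V)$ is compatible with convolution in the monoidal category. First I would observe that the defining formulas for $\Four_{V/S,B}$ and $\Four_{V/S,\eL_\psi}$ are formally identical, differing only in the kernel on $\bbA^1$ (the object $B$ versus $\eL_\psi$). Pulling back along $\mu$ and using that $\mu$ factors as $V^\vee \xt_S V \xrightarrow{\text{something}} \bbA^1$, one wants to express $\mu^*B$ in terms of $\mu^*\eL_\psi$. By Lemma \ref{lem:P^2=B}, we have $j^*B \cong (\iota^*j^*\eL_\psi * j^*\eL_\psi)[1]$ on $\bbG_m$; the issue is that $B$ and $\eL_\psi$ live on $\bbA^1$, not on $\bbG_m$, so I need to be slightly careful about extending across $0$.

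The cleanest route, I expect, is the following. Write the composite $\iota^*j^*\eL_\psi * \Four_{V/S,\eL_\psi}(-)$ explicitly: by the projection formula and proper base change (the same ``usual formal argument'' invoked after Theorem \ref{keythm}), convolving $\Four_{V/S,\eL_\psi}(M)$ with a kernel $L \in D_{ctf}(\bbG_m)$ via $\theta(1)_!$ produces $\pr^\vee_!(\pr^*M \ot \mathcal{K})[d]$ where $\mathcal{K}$ is obtained from $\mu^*\eL_\psi$ by a fiberwise convolution against $L$ pulled back appropriately. Concretely, $\iota^*j^*\eL_\psi * \eL_\psi$ computed fiberwise over $V^\vee \xt_S V$ via the pairing $\mu$ should reproduce exactly $\mu^*(\iota^*j^*\eL_\psi * \eL_\psi) \cong \mu^* B[-1]$ away from the locus $\mu = 0$, with the behavior over $\mu = 0$ matching because $0^*B \cong R \cong 0^*(\text{the relevant extension})$. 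So the strategy is: (1) reduce the corollary to the kernel-level identity that convolving $\mu^*\eL_\psi$ with $\iota^*j^*\eL_\psi$ (in the fiber direction) yields $\mu^*B[-1]$; (2) verify this identity on $\bbA^1$, i.e. prove $\iota^*j^*\eL_\psi * \eL_\psi \cong B[-1]$ as sheaves on $\bbA^1$ rather than just on $\bbG_m$ — but this is precisely the content of Lemma \ref{lem:P^2=B}, whose statement already lives on $\bbG_m$, so one only needs that the convolution $L * \eL_\psi = m_!(L \bt \eL_\psi)$ makes sense with $\eL_\psi$ on all of $\bbA^1$ and agrees; (3) conclude by transporting through $\mu$ and $\pr^\vee_!$, tracking the Tate twists and shifts so that the $[-1]$ from the lemma combines with the $[d]$ in the definition of $\Four_{V/S,\eL_\psi}$ to give the $[d]$ in $\Four_{V/S,B}$ plus the correcting $[1]$.

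More precisely, I would argue: $\iota^*j^*\eL_\psi * \Four_{V/S,\eL_\psi}(M)[1] = \theta^\vee(1)_!(\iota^*j^*\eL_\psi \bt \pr^\vee_!(\pr^*M \ot \mu^*\eL_\psi)[d])[1]$, where $\theta^\vee(1)$ is the $\bbG_m$-action on $V^\vee$. By proper base change this is $\pr^\vee_!(\pr^*M \ot \mathcal{K})[d+1]$ where $\mathcal{K} = (\text{action map})_!(\iota^*j^*\eL_\psi \bt \mu^*\eL_\psi)$, the action being on the $V^\vee$-factor. Now the key point is that scaling $\xi \mapsto \lambda\xi$ in $V^\vee$ scales $\mu(\xi,v) = \brac{v,\xi}$ by $\lambda$, so this fiberwise convolution is exactly the pullback along $\mu$ of $\iota^*j^*\eL_\psi * \eL_\psi$ on $\bbA^1$ — more carefully, one identifies $\bbG_m \xt (V^\vee \xt_S V) \to V^\vee \xt_S V$, $(\lambda,\xi,v) \mapsto (\lambda\xi,v)$, and checks the relevant square with $m : \bbG_m \xt \bbA^1 \to \bbA^1$ is Cartesian over the $\mu \neq 0$ locus, while over $\mu = 0$ both sides are computed directly (the fiber of $\iota^*j^*\eL_\psi * \eL_\psi$ at $0 \in \bbA^1$ is $h_!(\iota^*j^*\eL_\psi \ot \text{inv}^*\eL_\psi)$-ish and equals $0^*B \cong R$ by the argument already in Lemma \ref{lem:P^2=B}). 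Hence $\mathcal{K} \cong \mu^*(\iota^*j^*\eL_\psi * \eL_\psi) \cong \mu^*B[-1]$, and substituting gives $\pr^\vee_!(\pr^*M \ot \mu^*B)[d] = \Four_{V/S,B}(M)$, as desired. The canonicity of the isomorphism follows from the canonicity in Lemma \ref{lem:P^2=B}.

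The main obstacle I anticipate is the bookkeeping at the locus $\mu = 0$: the change-of-variables identification of the fiberwise convolution with $\mu^*(\iota^*j^*\eL_\psi * \eL_\psi)$ is transparent over $\{\mu \neq 0\}$ but needs a separate (small) check over the zero-locus of $\mu$, which is a hyperplane-bundle-like subscheme of $V^\vee \xt_S V$. One must confirm that the $!$-pushforward along the action map, restricted there, still computes $0^*B \cong R$ with the right twist — this is essentially the observation $h_!\eL_\psi = 0$ combined with $0^*B \cong R$ that was already used, but it deserves to be spelled out rather than waved through. Everything else is a routine juggling of proper base change, the projection formula, and Tate twists.
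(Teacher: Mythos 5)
Your approach is correct, and it is the deduction the paper intends (Corollary~\ref{cor:BL^2} is stated without proof, the point being exactly the compatibility of the kernel transform with $\bbG_m$-convolution via proper base change and the projection formula). The bookkeeping works: writing $a:\bbG_m \xt V^\vee \xt_S V \to V^\vee\xt_S V$, $(\lambda,\xi,v)\mapsto(\lambda\xi,v)$, and $\Phi:\bbG_m \xt V^\vee\xt_S V\to\bbG_m\xt\bbA^1$, $(\lambda,\xi,v)\mapsto(\lambda,\brac{v,\xi})$, one gets $\iota^*j^*\eL_\psi * \Four_{V/S,\eL_\psi}(M)[1]\cong \pr^\vee_!(\pr^*M\ot\mathcal{K})[d+1]$ with $\mathcal{K}=a_!\Phi^*(\iota^*j^*\eL_\psi\bt\eL_\psi)$, and once $\mathcal{K}\cong\mu^*B[-1]$ the shifts combine to give $\Four_{V/S,B}(M)$.

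However, the obstacle you anticipate at the locus $\mu=0$ is a phantom. The square with vertical maps $a$ and $m:\bbG_m\xt\bbA^1\to\bbA^1$ and horizontal maps $\Phi$ and $\mu$ is Cartesian over \emph{all} of $\bbA^1$, not merely over $\bbG_m$: the canonical map from $\bbG_m\xt V^\vee\xt_S V$ to the fiber product has explicit inverse $(\lambda,t,\xi',v)\mapsto(\lambda,\lambda^{-1}\xi',v)$, which is defined everywhere since $\lambda$ ranges over $\bbG_m$. Equivalently, both vertical maps are $\bbG_m$-bundles whose fibers over $\{\mu=0\}$ (resp.\ over $\{0\}\subset\bbA^1$) are still copies of $\bbG_m$; nothing degenerates there. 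Base change for $!$-pushforward therefore gives $\mathcal{K}\cong\mu^*(\iota^*j^*\eL_\psi*\eL_\psi)\cong\mu^*B[-1]$ uniformly, and the separate stalk check over $\mu=0$ (your invocation of $h_!\eL_\psi=0$ and $0^*B\cong R$) is unnecessary. This is an overcomplication rather than an error; the conclusion and all shifts are correct.
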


\subsection{Monodromization of $\eL_\psi$ over $\bar k$} 
Suppose that $k$ is algebraically closed, so $A^0$ is simply a ring instead of
a sheaf of rings (i.e., there is no Galois action). 

\begin{lem} \label{lem:mon-AS}
There exists a (non-canonical) isomorphism of pro-objects
\[ I^0 * \eL_\psi \cong j_*I^0 [-1]. \] 
\end{lem}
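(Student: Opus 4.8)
The plan is to mimic the structure of the proof of Lemma \ref{lem:mon-B}, replacing the input object $B$ by $\eL_\psi$ and exploiting the Fourier--Deligne inversion formula together with Lemma \ref{lem:P^2=B} and Lemma \ref{lem:mon-B}. First I would record the two facts about $\eL_\psi$ that parallel the facts $h_!B=0$, $0^*B\cong R$ used earlier: namely $h_!\eL_\psi = 0$ (the Artin--Schreier sheaf has trivial cohomology on $\bbA^1$) and $0^*\eL_\psi \cong R$. The first of these shows, just as in Lemma \ref{lem:mon-B}, that $I^0 * \eL_\psi$ is monodromic with $h_!(I^0*\eL_\psi) \cong h_!j_!I^0 \ot h_!\eL_\psi = 0$, hence $0^!(I^0*\eL_\psi) = 0$ and the canonical morphism $I^0*\eL_\psi \to j_*j^*(I^0*\eL_\psi)$ is an isomorphism. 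So it suffices to identify the restriction $I^0 * j^*\eL_\psi$ as a pro-object on $\bbG_m$, and show it is (non-canonically) isomorphic to $I^0[-1]$.

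For the restriction, I would convolve the exact triangle $1_!R(-1)[-2] \to R \to j^*\eL_\psi$ (the restriction to $\bbG_m$ of the standard triangle for $\eL_\psi$, using $0^*\eL_\psi\cong R$ and that $\eL_\psi$ is lisse on $\bbG_m$ with a section over $1$) with $I^0_n$ for $p\nmid n$, getting triangles $1_!I^0_n(-1)[-2]\,\xleftarrow{}\, \dots$; here one must be slightly careful since $1_!R$ convolved with $I^0_n$ is a translate of $I^0_n$ itself (translation by $1$ being the group operation on $\bbG_m$ — wait, $\bbG_m$ is multiplicative, so convolution with $1_!R$ is the identity functor). Thus the triangle becomes $I^0_n(-1)[-2] \to I^0_n * R \to I^0_n * j^*\eL_\psi$, and by Lemma \ref{lem:mon} the pro-object $\prolim I^0_n * R$ is $R$ with the first map the augmentation twisted. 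So $I^0 * j^*\eL_\psi$ is, as a pro-object, the cone of $I^0(-1)[-2] \to R(-1)[-2]$, i.e. $I^1(-1)[-1]$ — but this is \emph{not} $I^0[-1]$, so this naive approach gives the wrong answer, which tells me the key input must be the self-duality/inversion coming from Lemma \ref{lem:P^2=B}.

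The correct route: by Lemma \ref{lem:P^2=B}, $\iota^*j^*\eL_\psi * \eL_\psi \cong B[-1]$, so convolving with $I^0$ and using $\iota$-equivariance of the monodromization (the pro-object $I^0$ is $\iota^*$-invariant up to isomorphism, since $\iota$ is an automorphism of $\bbG_m$ preserving the unipotent monodromy filtration) we get $\iota^*(I^0 * j^*\eL_\psi) * j^*\eL_\psi \cong I^0 * j^*B \cong I^1(-1)[-2]$ by Lemma \ref{lem:mon-B}. Combining this with the naive computation $I^0 * j^*\eL_\psi \cong I^1(-1)[-1]$ forces a relation that pins down the pro-object: writing $P := I^0 * j^*\eL_\psi$, we have $\iota^* P * P \cong P * I^1(-1)[-1]$ after using $I^0 * I^1 \cong I^1(-1)[-2]$... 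I would instead argue directly that $P$ is an invertible pro-object in the monodromic monoidal category with $\iota^*P \cong P^{-1}(-1)[-2]$, deduce from the classification of such (the monodromic units are the $I^0_n$-translates, cf.\ Appendix \ref{append}) that $P \cong I^0 * (\text{something invertible})$, and that the Tate twists and shifts match only for $P \cong I^0[-1]$ up to the non-canonical choice. The main obstacle I anticipate is exactly this last identification — showing that the invertible monodromic pro-object $I^0 * j^*\eL_\psi$, which the cohomology-dimension count places in the right degree, must be $I^0[-1]$ rather than some nontrivial Tate-twisted or Kummer-twisted variant; this is where the non-canonicity enters and where one genuinely needs the Gauss-sum normalization implicit in Lemma \ref{lem:P^2=B} together with the fact that over $\bar k$ there is no Galois obstruction.
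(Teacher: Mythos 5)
Your proposal gets the initial reduction right: since $h_!\eL_\psi = 0$, the same Künneth argument as in Lemma \ref{lem:mon-B} gives $0^!(I^0 * \eL_\psi)=0$, so one only needs to identify the pro-object $I^0 * j^*\eL_\psi$ on $\bbG_m$. That step matches the paper. The rest of the proposal, however, contains a concrete error and then a gap that you yourself flag.

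The error: the exact triangle $1_!R(-1)[-2] \to R \to j^*\eL_\psi$ does not exist. For $B=u_*R$, that triangle is the recollement triangle of the open-closed decomposition $\bbA^1 = (\bbA^1-\{1\}) \sqcup \{1\}$; there is no analogue for $\eL_\psi$, which is a nontrivial rank-one lisse sheaf on $\bbA^1$ rather than a pushforward from an open subset. In fact $j^*\eL_\psi$ has no nonzero global sections over $\bar k$, so there is no nonzero map $R\to j^*\eL_\psi$ at all; having $0^*\eL_\psi\cong R$ only identifies a stalk. So the ``wrong answer'' you obtain is a symptom of an invalid starting triangle, not of some missing correction term from the Fourier inversion.

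The gap: the alternative route via Lemma \ref{lem:P^2=B} yields a functional equation for $P:=I^0 * j^*\eL_\psi$, but, as you acknowledge, this cannot distinguish $I^0[-1]$ from a Kummer-twisted variant. The paper does not argue this way at all. It computes $1^*(I^0_n * j^*\eL_\psi) \cong \Gamma_c(\bbG_m, I^0_n \ot j^*\eL_\psi)$ directly (using $\iota^*I^0_n\cong I^0_n$ and proper base change), then uses the tame ramification of $I^0_n$ at $\infty$ to show $\Gamma_c \to \Gamma$ is an isomorphism (so the complex lives in degrees $0$ and $1$), uses that a lisse sheaf on the affine curve $\bbG_m$ has $H^0_c=0$ (so only degree $1$ remains), and then applies the Grothendieck--Ogg--Shafarevich formula fiberwise over $\spec A^0_n$ to conclude that $H^1_c$ is an invertible $A^0_n$-module. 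Corollary \ref{cor:Locf} (together with the observation that the monodromy on $I^0_n * j^*\eL_\psi$ comes from $I^0_n$) then forces an isomorphism $I^0_n * j^*\eL_\psi[1]\cong I^0_n$. These tools — tameness at infinity, $\Gamma_c=\Gamma$, Grothendieck--Ogg--Shafarevich — are exactly what pins down the answer and bypasses the normalization ambiguity you run into; none of them appears in your proposal. You also do not address the final step of assembling the finite-level isomorphisms into a pro-isomorphism, which the paper handles with a separate surjectivity argument.
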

\begin{proof}
As in the proof of Lemma \ref{lem:mon-B}, it suffices to prove the isomorphism 
after restriction to $\bbG_m$. Let $n$ be coprime to $p$. By proper base change,  
\[ 1^*(I^0_n * j^*\eL_\psi) \cong \Gamma_c(\bbG_m, I^0_n \ot_R j^*\eL_\psi) \]
where we observe that the pullback of $I^0_n$ under the
multiplicative inverse map $\bbG_m \to \bbG_m$ is isomorphic to $I^0_n$.
Since $I^0_n$ is tamely ramified at $\infty \in \bbP^1(k)$, the canonical
map \[ \Gamma_c(\bbA^1, j_!I^0_n \ot \eL_\psi) \to \Gamma(\bbA^1, j_!I^0_n \ot \eL_\psi) \]
is an isomorphism (cf.~proof of \cite[Lemma 7.1(1)]{KW}). 
In particular $\Gamma_c(\bbG_m, I^0_n \ot j^*\eL_\psi)$ lives in cohomological degrees
$0$ and $1$. Since $I^0_n \ot j^*\eL_\psi$ is locally constant and $\bbG_m$ is not complete,
$H^0_c(\bbG_m, I^0_n \ot j^*\eL_\psi)=0$. Thus $\Gamma_c(\bbG_m, I^0_n \ot j^*\eL_\psi)$
lives only in cohomological degree $1$. 
\smallskip

We now consider $I^0_n$ as a locally free sheaf of $A^0_n$-modules of rank $1$. 
If we let $\psi'$ denote the composition $\bbF_p \to R^\times \to (A^0_n)^\times$, 
then $\eL_\psi \ot_R A^0_n \cong \eL_{\psi'}$, where the latter is
the Artin-Schreier sheaf with respect to $\psi'$ as a locally free
sheaf of $A^0_n$-modules of rank $1$. Hence
$\eF:=I^0_n \ot_{A^0_n} j^*\eL_{\psi'}$, which is isomorphic to $I^0_n \ot_R j^*\eL_\psi$,
is a locally free sheaf of $A^0_n$-modules of rank $1$. 
In particular, $\eF \in D_{ctf}(\bbG_m, A^0_n)$ and 
$\Gamma_c(\bbG_m, \eF)[1]$ is quasi-isomorphic to a finite projective $A^0_n$ module $P$. 
Applying the Grothendieck-Ogg-Shafarevich formula 
\cite[Expos\'e X, Corollaire 7.2]{SGA-5}, one checks that the fiber of $P$ over any
point of $\spec A^0_n$ has dimension $1$. 
So there exists an isomorphism $P \cong A^0_n$ of $A^0_n$-modules.
Observe from the Cartesian square
\[ \xymatrix{ \bbG_m \xt \bbG_m \xt \bbG_m \ar@<2pt>[rr]^-{\theta(n) \xt \id_{\bbG_m}} 
\ar@<-2pt>[rr]_-{\pr_2 \xt \id_{\bbG_m}}  \ar[d]_{\id_{\bbG_m \xt m}} &&
\bbG_m \xt \bbG_m \ar[d]^m \\ \bbG_m \xt \bbG_m \ar@<2pt>[rr]^-{\theta(n)} 
\ar@<-2pt>[rr]_-{\pr_2} && \bbG_m } \]
that $I^0_n * j^*\eL_\psi$ is monodromic, and the monodromy action is induced by the monodromy
action on $I^0_n$. Hence by Corollary~\ref{cor:Locf}, 
there exists an isomorphism $I^0_n * j^*\eL_\psi[1] \cong I^0_n$.  

Suppose $n'$ is a multiple of $n$ and $p \nmid n'$. The kernel $\eK$ of the
surjection $I^0_{n'} \onto I^0_n$ is tamely ramified, so $H^2_c(\bbG_m, \eK \ot j^*\eL_\psi)=0$
by the same argument as above. We deduce that 
\[ I^0_{n'} * j^*\eL_\psi[1] \to I^0_n * j^*\eL_\psi[1] \] 
is a surjection of sheaves. Since $(A^0_{n'})^\times \to (A^0_n)^\times$ is also
surjective, we can find a projective system of isomorphisms 
$I^0_n * j^*\eL_\psi[1] \cong I^0_n$ 
inducing an isomorphism of pro-sheaves.
\end{proof}

\begin{cor} When $k$ is algebraically closed, there exists a (non-canonical) isomorphism 
between the functors
$\Four_{V/S,B}$ and $\Four_{V/S,\eL_\psi}$ restricted to $D^b_\mon(V) \to D^b_\mon(V^\vee)$. 
\end{cor}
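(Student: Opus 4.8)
The plan is to deduce the corollary from Corollary~\ref{cor:BL^2} together with the monodromization computation of Lemma~\ref{lem:mon-AS}. Corollary~\ref{cor:BL^2} provides a canonical (hence functorial in $M$) isomorphism $\Four_{V/S,B}(M) \cong \iota^* j^*\eL_\psi * \Four_{V/S,\eL_\psi}(M)[1]$. Since $\Four_{V/S,\eL_\psi}$ preserves monodromicity (this is classical, and in any case follows by the same direct argument that shows $\Four_{V/S,B}$ preserves monodromicity in Theorem~\ref{thm:mon}(i)), the complex $N := \Four_{V/S,\eL_\psi}(M)$ lies in $D^b_\mon(V^\vee)$, and $\iota^* j^*\eL_\psi * N[1] \cong \Four_{V/S,B}(M)$ lies in $D^b_\mon(V^\vee)$ as well by Theorem~\ref{thm:mon}(i). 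Hence the whole task reduces to constructing a functorial isomorphism $\iota^* j^*\eL_\psi * N \cong N[-1]$ for $N \in D^b_\mon(V^\vee)$.

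Since $\iota^* j^*\eL_\psi * N$ is monodromic, Lemma~\ref{lem:mon} lets me replace convolution by $\iota^* j^*\eL_\psi$ with convolution by its monodromization: $\iota^* j^*\eL_\psi * N \cong I^0 * (\iota^* j^*\eL_\psi * N) \cong (I^0 * \iota^* j^*\eL_\psi) * N$. So the problem becomes the purely one-dimensional one of identifying the monodromization $I^0 * \iota^* j^*\eL_\psi \in D_{ctf}(\bbG_m)$, which I expect to be $I^0[-1]$. To compute it I would write $\iota = m_{-1} \circ \on{inv}$, with $\on{inv}$ inversion and $m_{-1}$ multiplication by $-1$ on $\bbG_m$, so that $\iota^* j^*\eL_\psi = \on{inv}^*(m_{-1}^* j^*\eL_\psi) \cong \on{inv}^* j^*\eL_{\psi^{-1}}$ via the standard isomorphism $m_{-1}^*\eL_\psi \cong \eL_{\psi^{-1}}$ on $\bbA^1$. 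The elementary base-change identity $K * \on{inv}^* L \cong \on{inv}^*(\on{inv}^* K * L)$ (valid for pro-objects $K$ and objects $L$ of $D_{ctf}(\bbG_m)$, since $\on{inv}\circ m = m \circ (\on{inv}\times\on{inv})$), together with $\on{inv}^* I^0 \cong I^0$ (already noted in the proof of Lemma~\ref{lem:mon-AS}), then gives $I^0 * \iota^* j^*\eL_\psi \cong \on{inv}^*(I^0 * j^*\eL_{\psi^{-1}})$. Finally Lemma~\ref{lem:mon-AS} --- whose proof applies verbatim to any nontrivial additive character, in particular to $\psi^{-1}$ --- gives, after restriction to $\bbG_m$, an isomorphism $I^0 * j^*\eL_{\psi^{-1}} \cong I^0[-1]$; applying $\on{inv}^*$ and $\on{inv}^* I^0 \cong I^0$ once more yields $I^0 * \iota^* j^*\eL_\psi \cong I^0[-1]$.

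Combining these, for $N \in D^b_\mon(V^\vee)$ I obtain $\iota^* j^*\eL_\psi * N \cong (I^0[-1]) * N \cong (I^0 * N)[-1] \cong N[-1]$, using $I^0 * N \cong N$ (Lemma~\ref{lem:mon}), and Corollary~\ref{cor:BL^2} then gives $\Four_{V/S,B}(M) \cong \Four_{V/S,\eL_\psi}(M)$. Every isomorphism used is functorial in $M$; the only non-canonical input is the one furnished by Lemma~\ref{lem:mon-AS} (and possibly $\on{inv}^* I^0 \cong I^0$), which is independent of $M$, so the construction assembles into a non-canonical isomorphism of functors $D^b_\mon(V) \to D^b_\mon(V^\vee)$. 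I expect the identification of $I^0 * \iota^* j^*\eL_\psi$ to be the only real work, and the main subtlety there is accounting for the twist by $\iota$ rather than by plain inversion; the symmetry manipulations above handle this, but one could instead re-run the Grothendieck-Ogg-Shafarevich argument of Lemma~\ref{lem:mon-AS} directly for $\iota^* j^*\eL_\psi$, which is lisse on $\bbG_m$, wildly ramified with Swan conductor $1$ at $0$, and lisse at $\infty$, so that the Euler-characteristic count is unchanged.
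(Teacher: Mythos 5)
Your argument is correct in substance, and it takes a genuinely different route from the paper. The paper's own proof is more economical: it observes that $I^0 * B \cong j_*I^1(-1)[-1]$ (Lemma~\ref{lem:mon-B}), uses Remark~\ref{rem:Itorsor} to trivialize $I^1 \cong I^0$ over $\bar k$, and directly compares this with $I^0 * \eL_\psi \cong j_*I^0[-1]$ (Lemma~\ref{lem:mon-AS}); since both Fourier transforms can be rewritten with the kernel replaced by its monodromization when applied to monodromic input (via Lemma~\ref{lem:mon}), equality of the monodromizations of $B$ and $\eL_\psi$ immediately gives the result. You instead start from Corollary~\ref{cor:BL^2}, $\Four_{V/S,B}(M) \cong \iota^*j^*\eL_\psi * \Four_{V/S,\eL_\psi}(M)[1]$, and reduce to showing that the correction factor $\iota^*j^*\eL_\psi$ acts as $[-1]$ on monodromic complexes; your symmetry manipulation $\iota = m_{-1}\circ\mathrm{inv}$ to transport Lemma~\ref{lem:mon-AS} to $\iota^*j^*\eL_\psi$ is correct. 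This has the advantage of isolating the ``correction kernel'' as an object, but it routes through Lemma~\ref{lem:P^2=B}/Corollary~\ref{cor:BL^2} where the paper's route bypasses them. One slip worth flagging: in your final chain you write $\iota^*j^*\eL_\psi * N \cong I^0 * (\iota^*j^*\eL_\psi * N)$ and later $I^0 * N \cong N$, both citing Lemma~\ref{lem:mon}, which in fact reads $I^0 * M \cong M(-1)[-2]$. The missing twist-and-shift $(-1)[-2]$ appears once on each side of your chain and cancels, so the conclusion $\iota^*j^*\eL_\psi * N \cong N[-1]$ survives, but as written the intermediate isomorphisms are off by $(-1)[-2]$.
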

\begin{proof}
Lemma \ref{lem:mon-B} and Remark \ref{rem:Itorsor} imply that there exists
an isomorphism $I^0 * B \cong j_*I^0[-1]$. The latter is also isomorphic to 
$I^0 * \eL_\psi$ by Lemma \ref{lem:mon-AS}. One easily sees that the Fourier-Deligne transform
preserves monodromicity, and the isomorphism of restricted functors follows
from Lemma \ref{lem:mon}.
\end{proof}

\subsection{The universal Gauss sum} Let $k$ once again be arbitrary.
Define the pro-object \[ \eG= I^0 * j^*\eL_\psi(1)[1]. \] 
Lemma \ref{lem:mon-AS} implies that
$\eG$ is a monodromic pro-sheaf, and there exists a trivialization 
$\eG \cong I^0$ after base changing from $k$ to $\bar k$. 
Under the equivalence of abelian categories in Corollary~\ref{cor:Locf}, we see 
that $\eG$ corresponds
to an invertible (locally free of rank $1$) $A^0$-module on $\spec k$. 
We are motivated by \cite[Expos\'e VI, \S 4]{SGA-4h} to think of 
$\eG$ as a ``universal Gauss sum''. 

Let $\iota: \bbG_m \to \bbG_m$ denote the multiplicative inverse map. 
Then Lemmas \ref{lem:mon-B} and \ref{lem:P^2=B} give a canonical isomorphism 
\[ \iota^* \eG * \eG \cong I^1[-2].\]
We also see that the Fourier-Deligne transform on monodromic complexes is 
isomorphic to the functor 
$M \mapsto \pr^\vee_!(\pr^*M \ot \mu^*j_*\eG)[d+1]$
on $D^b_\mon(V) \to D^b_\mon(V^\vee)$. 
By Corollary~\ref{cor:BL^2}, we have 
\[ \Four_{V/S,B}(M) \cong \iota^* \eG * \Four_{V/S,\eL_\psi}(M)[2]. \]
for $M$ monodromic.

\section{Relation to Fourier transform on $\eD$-modules}
\label{sect:Dmod}

Let $k$ be algebraically closed of characteristic $0$. 
We use $\eM(V)$ to denote the abelian category of quasicoherent right $\eD$-modules on $V$.
Let $\eL = \eD_{\bbA^1}/(1-\partial_x)\eD_{\bbA^1}$ be the exponential $\eD$-module
on $\bbA^1 = \spec k[x]$. The Fourier transform is the functor
$D\eM(V) \to D\eM(V^\vee)$ defined by 
\[ \Four_{V/S,\eL}(M) = \pr^\vee_*( \pr^! M \ot^! \mu^! \eL )[1-d]. \]
It is well known \cite[Lemme 7.1.4]{Katz-Laumon} that this functor can also be described using the
isomorphism between the algebras of polynomial differential operators 
$\eD_{V^\vee} \to \eD_V$ defined in local coordinates by 
\[ k[\xi_1,\dotsc,\xi_d,\partial_{\xi_1},\dotsc,\partial_{\xi_d}]\to
k[v_1,\dotsc,v_d,\partial_{v_1},\dotsc,\partial_{v_d}] : 
\xi_i \mapsto -\partial_{v_i},\, \partial_{\xi_i} \mapsto v_i. \]

In the $\eD$-module situation, the analog of $B$ is 
$u_!u^! (\omega_{\bbA^1})$, where $\omega_{\bbA^1}$ is the sheaf of
differentials on $\bbA^1$ viewed as a right $\eD$-module. 
We will also call this $\eD$-module $B$.
A simple calculation shows that\footnote{Beilinson observed that $B$ 
essentially describes the differential equation for a shift of the Heaviside step function.}
\[ B = k[x, \partial_x]/ \partial_x (x-1) k[x,\partial_x]. \]
We define $\Four_{V/S,B} : D\eM(V) \to D\eM(V^\vee)$ by 
\[ \Four_{V/S,B}(M) = \pr^\vee_*(\pr^!M\ot^! \mu^!B)[1-d]. \]
Consider $D\eM(\bbG_m)$ with the monoidal structure induced by convolution without
compact support 
$L * K := m_*(L \bt K)$.
This monoidal category acts on $D\eM(V)$ by $L*M = \theta(1)_*(L \bt M)$.
The proof of Lemma~\ref{lem:P^2=B} can be easily modified to prove the following analog
of the lemma and Corollary~\ref{cor:BL^2}.

\begin{prop} There is a canonical isomorphism 
	\[ \iota^* j^*\eL * \eL \cong B \]
where $\iota : \bbG_m \to \bbG_m$ sends $\lambda \mapsto -\lambda^{-1}$. 
Consequently, we have a canonical isomorphism
\[ \Four_{V/S,B}(M) \cong \iota^*j^*\eL * \Four_{V/S,\eL}(M). \]
\end{prop}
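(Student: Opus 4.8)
The plan is to mirror the characteristic $p$ argument of Lemma~\ref{lem:P^2=B} and Corollary~\ref{cor:BL^2} verbatim, replacing the Artin--Schreier sheaf $\eL_\psi$ by the exponential $\eD$-module $\eL$, the six-functor formalism for $\ell$-adic sheaves by the one for $\eD$-modules, and the Fourier--Deligne inversion formula by the (well-known) inversion formula for the $\eD$-module Fourier transform $\Four_{V/S,\eL}$. First I would verify the first isomorphism $\iota^*j^*\eL * \eL \cong B$ in $D\eM(\bbG_m)$. By the same change of variables as in the proof of Lemma~\ref{lem:P^2=B}, the convolution $\iota^*j^*\eL * \eL$ (with respect to $m_*$) is identified with $\Four_{\bbA^1,\eL^{-1}}(j_!j^!\eL)$, where $\eL^{-1}$ denotes the exponential $\eD$-module for the character $x \mapsto -x$. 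Here one uses that for the exponential $\eD$-module the analog of the Fourier--Deligne computation $\Four_{\bbA^1}(1_!R)\cong\eL$ holds, namely $\eL$ is the Fourier transform of the delta $\eD$-module $\delta_1$ at $1$ (up to the appropriate shift), because $\eL = \eD/(1-\partial_x)\eD$ and Fourier transform exchanges $\partial_x$ with $v$ and $x$ with $-\partial_v$, sending $\delta_1$ to $\eD/(v-1)\eD$ twisted suitably. Then from the exact triangle $j_!j^!\eL \to \eL \to i_{0,*}i_0^!\eL$ (where $i_0:\{0\}\into\bbA^1$, and $i_0^!\eL$ is one-dimensional since $\eL$ is $\eO$-coherent of rank $1$) one applies $\Four_{\bbA^1,\eL^{-1}}$ and uses $\eD$-module Fourier inversion $\Four_{\bbA^1,\eL^{-1}}\circ\Four_{\bbA^1,\eL}\cong \id$ on the middle term to obtain an exact triangle whose outer terms identify $\Four_{\bbA^1,\eL^{-1}}(j_!j^!\eL)$ with $u_!u^!\omega_{\bbA^1} = B$. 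As in Lemma~\ref{lem:P^2=B}, the relevant $\Hom$ group vanishes (the Fourier transform of $\delta_0$ is the trivial connection $\eO_{\bbA^1}$, and $\Hom$ from a shift of that to a delta $\eD$-module at a point is zero), so the isomorphism is canonical.

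Next I would deduce the consequence $\Four_{V/S,B}(M) \cong \iota^*j^*\eL * \Four_{V/S,\eL}(M)$ exactly as Corollary~\ref{cor:BL^2} is deduced from Lemma~\ref{lem:P^2=B}. Both $\Four_{V/S,B}$ and $\Four_{V/S,\eL}$ are given by the kernel construction $\pr^\vee_*(\pr^!(-)\ot^!\mu^!(-))[1-d]$ with kernels $\mu^!B$ and $\mu^!\eL$ respectively, so writing $B \cong \iota^*j^*\eL * \eL[c]$ for the appropriate cohomological shift $c$ (which the careful bookkeeping of the $\eD$-module conventions will pin down — in the $\eD$-module normalization I expect $c=0$, matching the stated $\iota^*j^*\eL*\eL\cong B$ without shift), one composes the $\bbG_m$-convolution with the action of $D\eM(\bbG_m)$ on $D\eM(V)$. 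Concretely, $\mu^!(L*K)$ for the pairing $\mu$ and the multiplication/action maps fits into a base-change diagram relating $\mu$, the $\bbG_m$-action $\theta(1)$ on $V^\vee$ (or $V$), and $m$; proper pushforward ($\pr^\vee_*$) commutes with the relevant pullbacks by base change for $\eD$-module pushforward along the appropriate (proper, or at least cohomologically suitable) maps, and the projection formula for $\ot^!$ does the rest. This yields $\Four_{V/S,B}(M)\cong \iota^*j^*\eL * \Four_{V/S,\eL}(M)$, with the convolution here being the action $\theta(1)_*$ of $D\eM(\bbG_m)$ on $D\eM(V^\vee)$.

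The main obstacle I anticipate is purely bookkeeping rather than conceptual: tracking the shifts, twists, and the distinction between $!$- and $*$-pushforward in the $\eD$-module conventions, since the normalization of $\Four_{V/S,\eL}$ used here ($\pr^\vee_*(\pr^!(-)\ot^!\mu^!\eL)[1-d]$) differs by shifts from the common one, and one must check that the change-of-variables identification $\iota^*j^*\eL * \eL \cong \Four_{\bbA^1,\eL^{-1}}(j_!j^!\eL)$ comes out with no residual shift. A secondary point to be careful about is that in characteristic $0$ the maps $\pr^\vee$, $h$, etc.\ are not proper, so one should either work with $D^b$ of holonomic (or coherent with suitable growth) $\eD$-modules where $\Four_{V/S,\eL}$ and the needed base-change/projection formulas are available, or invoke the standard fact that $\Four_{V/S,\eL}$ is an equivalence with explicit inverse $\Four_{V/S,\eL^{-1}}$ up to shift (this is the $\eD$-module inversion theorem, e.g.\ as in \cite{Katz-Laumon}); everything else then goes through as in \S\ref{sect:FD}. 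Once the first isomorphism is established, the second is a formal consequence, just as in the $\ell$-adic case.
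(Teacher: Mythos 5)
Your proposal is correct and takes exactly the approach the paper indicates: the paper gives no detailed proof, only the remark that the arguments of Lemma~\ref{lem:P^2=B} and Corollary~\ref{cor:BL^2} can be easily modified to the $\eD$-module setting, and your sketch is precisely that modification (change of variables, excision triangle, $\eD$-module Fourier inversion, vanishing-$\Hom$ uniqueness, then a formal kernel argument for the consequence). The minor slips you make — the convolution $\iota^*j^*\eL * \eL$ lands in $D\eM(\bbA^1)$, not $D\eM(\bbG_m)$; the excision triangle should read $j_!j^*\eL \to \eL \to i_{0,*}i_0^*\eL$ (equivalently $i_{0,*}i_0^!\eL[1]$ since $\eL$ is $\eO$-coherent) — are exactly the shift/convention bookkeeping you already flag as the only thing requiring care.
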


\subsection{Mellin transform of $j^*B$}
Let $\fB$ denote the Mellin transform of $j^*B$, viewed
as a $\bbZ$-equivariant quasicoherent $\eO$-module on $\bbA^1 = \spec k[s]$.
The Mellin transform functor 
\[ \fM : \eM(\bbG_m) \to \QCoh(\bbA^1)^\bbZ \]
is defined by considering $\eD(\bbG_m)$ as the algebra of difference operators 
$\eD= k[s]\brac{T,T^{-1}}/(sT - T(s+1))$ under the identifications $s = x\partial_x$
and $T = x$. 
We consider the derived category of $\bbZ$-equivariant $\eO_{\bbA^1}$-modules 
$D(\QCoh(\bbA^1)^\bbZ)$ 
with monoidal structure induced by the usual derived tensor product over $k[s]$.
This monoidal structure corresponds to the convolution product 
on $D\eM(\bbG_m)$. More precisely,
$\fM(L*K) \cong \fM(L) \ot_{k[s]} \fM(K)$.

We start by proving the following
proposition, which is an analog of Proposition \ref{prop:B} in the $\eD$-module setting.

\begin{prop} \label{prop:Dmod} The module $\fB$ satisfies the following properties: 
	\begin{enumerate}
		\item $\fB$ is not invertible in $D(\QCoh(\bbA^1)^\bbZ)$. 
		\item The restriction of $\fB$ to $\bbA^1 - \bbZ := \spec k[s][s^{-1},(s\pm 1)^{-1},
			\dotsc]$ is invertible. 
		\item For any $\chi \in k$ and $n \in \bbN$, there exists an isomorphism 
			\[ \bigoplus_{i\in \bbZ} k[s]/(s-\chi-i)^n \cong
			\bigoplus_{i \in \bbZ} \fB \ot_{k[s]} k[s]/(s-\chi-i)^n \] of $\eD$-modules,
			where $T$ acts on $k[s]$ by translation. 
	\end{enumerate}
\end{prop}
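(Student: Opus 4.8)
The plan is to compute $\fB$ explicitly as a $\bbZ$-equivariant $\eO_{\bbA^1}$-module and then read off the three properties directly. First I would write down $\fB = \fM(j^*B)$. Starting from $B = k[x,\partial_x]/\partial_x(x-1)k[x,\partial_x]$, restricting to $\bbG_m$ kills the relation's factorization subtlety but keeps the module structure; then applying $\fM$ under $s = x\partial_x$, $T = x$ turns the generator/relation presentation of $j^*B$ into a presentation of $\fB$ over $k[s]\langle T, T^{-1}\rangle$. I expect the answer to be (up to a shift of the equivariant structure) something like $\fB \cong k[s]\langle T,T^{-1}\rangle \big/ \big( (s+1) - T s \big)$ or a close variant — i.e.\ $\fB$ is a rank-one $k[s]$-module on which $T$ acts as multiplication by a unit times a translation, but the unit degenerates (has a zero or pole) along $s \in \bbZ$. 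Concretely I anticipate $\fB$ is isomorphic as a $k[s]$-module to $k[s]$, with $T$ acting by $f(s) \mapsto \frac{s}{s+1} f(s+1)$ (or $\frac{s+1}{s}$; the exact rational function I'd pin down from the relation $\partial_x(x-1)$, using $\partial_x x = x\partial_x + 1$ so $\partial_x(x-1) = x\partial_x + 1 - \partial_x$, which in Mellin coordinates reads $s + 1 - s T^{-1} = 0$, giving $T^{-1} = \frac{s+1}{s}$ on the generator, hence $T$ acts by multiplication by $\frac{s}{s+1}$ followed by the shift $s \mapsto s+1$).

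Granting that computation, the three parts follow quickly. For (2): away from $\bbZ$ the rational function $\frac{s}{s+1}$ and its $\bbZ$-translates are units in $k[s][s^{-1},(s\pm1)^{-1},\dots]$, so $T$ acts invertibly and $\fB|_{\bbA^1 - \bbZ}$ is a free rank-one module with invertible equivariant structure; the inverse in the monoidal category is the module with $T$ acting by the reciprocal rational function. For (1): the underlying $\eO_{\bbA^1}$-module of $\fB$ is free of rank one, hence so is any tensor power $\fB^{\otimes n}$, but the "square" obstruction from Theorem~\ref{keythm} / Proposition~\ref{prop:B}(1) — or more directly, the fact that $j^*B$ sits in a triangle $1_!R(-1)[-2] \to R \to j^*B$ whose $\eD$-module analog $\fM$ turns into a non-split two-step filtration — shows $\fB$ cannot be invertible: an invertible object in $D(\QCoh(\bbA^1)^\bbZ)$ with tensor-unit $k[s]$ must be a shift of a line bundle with invertible $\bbZ$-action, and $\fB$ fails this at the points of $\bbZ$ (the map $T$ is not invertible there, so $\fB \otimes_{k[s]} k[s]/(s-i)$ is not invertible as a $\bbZ/(\text{stabilizer})$-equivariant module). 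I would phrase this as: if $\fB$ were invertible, its restriction to each integer point would be an invertible module over the stalk, forcing the "monodromy" to be an isomorphism there, contradicting the explicit form.

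For (3) — which I expect to be the main obstacle — the point is to show that after summing over the $\bbZ$-orbit, the twisting by the rational function $\frac{s}{s+1}$ becomes trivial. I would argue as follows: $\bigoplus_{i\in\bbZ} \fB \otimes_{k[s]} k[s]/(s-\chi-i)^n$ is, as a $k$-vector space, $\bigoplus_i k[s]/(s-\chi-i)^n$, with $T$ sending the $i$-th summand to the $(i{+}1)$-st (or $(i{-}1)$-st) via multiplication by the image of $\frac{s}{s+1}$, which is a \emph{unit} in $k[s]/(s-\chi-i)^n$ \emph{provided} $\chi+i \notin \{0,-1\}$, i.e.\ for all but at most finitely many $i$. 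At the finitely many bad indices the naive rational function has a zero or pole, but there $\fB \otimes k[s]/(s-\chi-i)^n$ must be reinterpreted: because $j^*B$ only differs from the constant sheaf by something supported at $1 \in \bbG_m$, the Mellin transform $\fB$ is actually isomorphic to $k[s]$ as a plain module and the map $T$ is given by an \emph{honest} endomorphism (not merely rational) that happens to be multiplication by $s$ on one summand and by $(s+1)^{-1}$-twisted shift elsewhere; summing over the orbit telescopes these into a single shift operator with unit coefficients. Thus the whole direct sum is isomorphic to $\bigoplus_i k[s]/(s-\chi-i)^n$ with $T$ acting by the bare translation, which is precisely the asserted $\eD$-module. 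The delicate point is bookkeeping the behavior at $s \in \{0,-1\} - \chi$ (or wherever the zero/pole sits) and checking the telescoping is an isomorphism of $\eD$-modules, not just of the underlying vector spaces; I would do this by exhibiting an explicit change of basis $e_i \mapsto c_i e_i$ with $c_i \in k^\times$ chosen so that $c_{i+1}^{-1} \cdot (\text{coefficient of } T) \cdot c_i = 1$ for all $i$, which is solvable over the orbit precisely because the product of the coefficients around... there is no cycle (it is a $\bbZ$-chain, not a loop), so one simply propagates the normalization outward from a chosen base index, and the finitely many degenerate coefficients are absorbed by allowing the $c_i$ to jump appropriately. This last verification — that the product $\prod$ of coefficients over any finite sub-interval is a unit, so the normalization propagates — is where the explicit form of $\fB$ from step one is essential, and where I'd spend the most care.
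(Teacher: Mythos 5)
Your step one is wrong, and this error undermines the whole argument. You claim that $\fB$ is isomorphic as a $k[s]$-module to $k[s]$ (free of rank one), with $T$ acting by a twisted shift. That is not the case. From $\partial_x(x-1) = x\partial_x + 1 - \partial_x$ and $\partial_x = T^{-1}s$ one gets the presentation $\fB = \eD/((s+1)-T^{-1}s)\eD$, and the relation $\mathbf 1(s+1) = \mathbf 1 T^{-1}s$ does \emph{not} let you express $\mathbf 1 T^{-1}$ as a $k[s]$-multiple of $\mathbf 1$, since $s$ is not a unit in $k[s]$. In fact (Lemma~\ref{lem:B'}) the map $\mathbf 1 \mapsto \frac{1}{s+1}$ identifies $\fB$ with the $k[s]$-submodule $\fB' \subset k(s)$ generated by all the functions $\frac{1}{s+i}$, $i\in\bbZ$, with $T$ acting by the genuine translation $f(s)\mapsto f(s+1)$. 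This module is infinitely generated over $k[s]$, not free of rank one.

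Because of this, your arguments for (1) and (3) do not stand. For (1), you argue that invertibility fails because "$T$ is not invertible at the points of $\bbZ$," i.e.\ the restriction to a $\bbZ$-orbit is not invertible; but in fact part (3) shows precisely that the orbit-restriction of $\fB$ \emph{is} isomorphic to the orbit-restriction of the unit object, so the obstruction you propose is not there. The actual obstruction is global: any hypothetical inverse $N$ would satisfy $H^0N \cong \Hom_{k[s]}(\fB',k[s])$, which vanishes (since $(s-j)$ must divide every image $\phi(\tfrac{1}{s-i})$ with $j\neq i$), yet localizing at the generic point gives $k(s)\otimes_{k[s]}N\cong k(s)$, forcing $H^0N\neq 0$. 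For (3), your "telescoping" of rational-function coefficients is trying to repair the nonexistent twisted-shift picture and does not produce a proof; the correct route is to observe that the free rank-one submodule $\fB_i = k[s]\cdot\frac{1}{s-i}\subset\fB'$ has cokernel a direct sum of skyscrapers at integers $j\neq i$, hence $\fB'\otimes_{k[s]}k[s]/(s-\chi-i)^n$ is free of rank one with generator $\frac{1}{s-i}\otimes 1$, and these generators are visibly permuted by $T$. Your conclusion in (2) is right and for essentially the right reason (each $\frac{1}{s+i}$ becomes a unit after inverting $s-\bbZ$, so $\fB'|_{\bbA^1-\bbZ}=\eO(\bbA^1-\bbZ)$), but it sits on the same mistaken description of $\fB$. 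You should redo the Mellin-transform computation carefully and get $\fB\cong\fB'$ before attempting the three parts.
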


In order to prove the proposition, we will need an explicit description of $\fB$. 
Consider $k(s)$ as a right $\eD$-module where $T$ acts by translation.
Let $\fB'$ denote the $\eD$-submodule of $k(s)$ generated by $\frac 1 s$, or
equivalently, the $k[s]$-submodule generated by $\frac 1 {s+i}$ for all $i \in \bbZ$.

\begin{lem} \label{lem:B'}
There exists an isomorphism of $\eD$-modules $\fB \cong \fB'$. 
\end{lem}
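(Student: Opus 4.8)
The plan is to compute both $\fB$ and $\fB'$ explicitly as $k[s]$-modules with their $\eD$-module structure and exhibit a natural isomorphism. First I would unwind the definition of the Mellin transform on $j^*B = j^*u_*(\omega_{\bbA^1})$. Recall $B = k[x,\partial_x]/\partial_x(x-1)k[x,\partial_x]$, so $j^*B$ is the localization $B[x^{-1}]$, a module over the difference-operator algebra via $s = x\partial_x$, $T = x$. Concretely, applying $\fM$ means presenting $j^*B$ in terms of the variable $s$: the generator $1 \in B$ becomes a cyclic generator, and the relation $\partial_x(x-1) = 0$ must be rewritten using $\partial_x = x^{-1}s$ (valid after inverting $x$) and $x = T$. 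So $\partial_x(x-1)$ acts as $x^{-1}s(x-1) = x^{-1}(sx - s) = x^{-1}(T(s+1) - s)$ — using $sT = T(s+1)$. Thus the defining relation on the cyclic generator $e$ of $\fM(j^*B)$ is $(T(s+1) - s)\cdot e = 0$, i.e. $T\cdot e = \frac{s}{s+1}\, e$ inside the fraction field.

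The key step is then to identify this cyclic $\eD$-module. Set $e \mapsto \tfrac 1s \in k(s)$; then $T\cdot \tfrac 1s$ should be the translate $\tfrac{1}{s+1}$ (since $T$ acts on $k(s)$ by translation $s \mapsto s+1$ in the chosen convention), and indeed $\frac{s}{s+1}\cdot \frac1s = \frac{1}{s+1}$, so the relation is satisfied and we get a well-defined $\eD$-module map $\fB = \fM(j^*B) \to k(s)$ sending $e$ to $\tfrac1s$. Its image is exactly the $\eD$-submodule generated by $\tfrac1s$, which is $\fB'$ by definition. So I would have a surjection $\fB \onto \fB'$, and it remains to check injectivity. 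For this I would compute the underlying $k[s]$-module structure of $\fB$ directly from the presentation: $\fB$ is generated over $k[s][T,T^{-1}]$ by $e$ with the single relation above, and one checks that as a $k[s]$-module it is free with basis $\{T^i e\}_{i \in \bbZ}$ modulo the relations forcing $T^i e = \tfrac{1}{s+i}\cdot(\text{unit})$ — equivalently, $\fB$ is the $k[s]$-span of $\tfrac{1}{s+i}$ for $i \in \bbZ$, which maps isomorphically onto $\fB' \subset k(s)$.

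The main obstacle I expect is the bookkeeping of conventions: getting the direction of the translation action of $T$ on $k(s)$ consistent with the identification $sT = T(s+1)$, and making sure the Mellin transform as defined (which may involve a shift or a dualization, since $B$ is built from $\omega_{\bbA^1}$ and we are using right $\eD$-modules) really produces the cyclic presentation $(T(s+1)-s)e = 0$ rather than some twist of it. I would handle this by carefully tracking the right $\eD$-module conventions through $u_!u^!\omega_{\bbA^1}$ and the localization $j^*$, perhaps verifying the computation on a test object (e.g. $\delta$-modules at points $\lambda \in \bbG_m$, whose Mellin transforms are skyscrapers at $s = $ appropriate value) to pin down the normalization. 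Once the conventions are fixed, the isomorphism $\fB \cong \fB'$ is essentially forced by the cyclic presentation, and this explicit model of $\fB'$ inside $k(s)$ is exactly what is needed to read off the three properties claimed in Proposition~\ref{prop:Dmod} (non-invertibility from the failure of $\tfrac1s$ to generate all of $k(s)$ near $s=0$, invertibility away from $\bbZ$ since $\tfrac1s$ is a unit there, and the torsion computation at each $k[s]/(s-\chi-i)^n$).
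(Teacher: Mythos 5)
Your approach is the same as the paper's: read off the cyclic presentation of $\fB$ from $B = k[x,\partial_x]/\partial_x(x-1)k[x,\partial_x]$, rewrite $\partial_x(x-1)$ in terms of $s=x\partial_x$ and $T=x$, and exhibit a cyclic map $\fB \to k(s)$ landing in $\fB'$. Your choice of generator $e\mapsto \tfrac1s$ differs from the paper's $\mathbf 1\mapsto \tfrac1{s+1}$ only by $\mathbf 1 = eT$, so that part is fine (and your worry about the sign of the $T$-translation is resolved by the right-module convention: $f(s)\cdot T = f(s+1)$ is forced by $sT=T(s+1)$).

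The real gap is in the injectivity step. You claim that as a $k[s]$-module $\fB$ is ``free with basis $\{T^ie\}_{i\in\bbZ}$,'' but this cannot be literally true: in $\fB'$ one has the nontrivial $k[s]$-relation $(s+i)\cdot\tfrac1{s+i} - (s+j)\cdot\tfrac1{s+j} = 0$, so $\fB'$ is visibly not free on $\{\tfrac1{s+i}\}$, and if $\fB$ \emph{were} free on $\{T^ie\}$ then the surjection $\fB\onto\fB'$ would have a kernel. What you actually need to show is that the $k[s]$-relations imposed on $\{T^ie\}$ by $e\cdot(T(s+1)-s)=0$ and its right-$\eD$-shifts coincide exactly with those already present among $\{\tfrac1{s+i}\}$ in $k(s)$, and the phrase ``one checks'' does not discharge this. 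The paper instead works over $k$: from the defining relation it derives that $\mathbf 1(s+1)$ is $T$-invariant, deduces that $\fB$ is spanned over $k$ by $\mathbf 1 T^i$ ($i\in\bbZ$) together with $\mathbf 1 s^j$ ($j>0$), and then invokes the $k$-linear independence of $\{\tfrac1{s+i}\}_{i\in\bbZ}\cup\{s^j\}_{j\ge 0}$ inside $k(s)$ to get injectivity. That finite bookkeeping step (finding the correct $k$-spanning set, which is \emph{not} just $\{\mathbf 1 T^i\}$) is the piece missing from your argument, and it is precisely where the non-freeness over $k[s]$ is absorbed.
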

\begin{proof}
We have $\partial_x x = x\partial_x + 1$ so 
$\partial_x (x-1) = (s+1)-T^{-1} s$ in $\eD$. Therefore 
\[ \fB = \eD/((s+1)-T^{-1}s)\eD. \]
Let $\mathbf{1}$ denote the generator of $\fB$.
Conjugating $sT=T(s+1)$ in $\eD$ by $T^{-1}$ gives $T^{-1}s = (s+1)T^{-1}$ in $\eD$. 
Using this equality, $\mathbf{1}(s+1) = \mathbf 1 T^{-1}s = \mathbf 1 (s+1)T^{-1}$ in $\fB$,
and acting on the right by $T$ gives $\mathbf 1 (s+1)T = \mathbf 1(s+1)$. 
Using these relations, we deduce that $\fB$ is generated over $k$ by $\mathbf 1 T^i$ for $i \in \bbZ$
and $\mathbf 1 s^j$ for $j> 0$. 
 Then 
$\mathbf 1 \mapsto \frac 1 {s+1}$ defines a morphism of $\eD$-modules $\fB \to k(s)$.
Since $\frac 1 {s+i}$ for $i \in \bbZ$ and $s^j$ for $j\ge 0$ are $k$-linearly independent
in $k(s)$, we see that this morphism is an injection $\fB \into k(s)$. 
The image is $\fB'$.
\end{proof}

\begin{proof}[Proof of Proposition \ref{prop:Dmod}] Suppose that $\fB$
is invertible in $D(\QCoh(\bbA^1)^\bbZ)$, i.e., there exists an object $N$ of
this monoidal category such that $\fB \ot_{k[s]} N \cong k[s]$. 
Then $N \cong \Hom_{k[s]}(k[s],N) \cong \Hom_{k[s]}(\fB,k[s])$.
There are no nonzero morphisms from $\fB'$ to $k[s]$, so $H^0 N=0$.
On the other hand, since $k(s) \ot_{k[s]} \fB' \cong k(s)$, we 
have $k(s) \ot_{k[s]} N \cong k(s)$, which implies that $H^0 N \ne 0$. We thus get a contradiction,
so $\fB$ is not invertible. 

Since $\eO(\bbA^1-\bbZ) = k[s][s^{-1},(s\pm 1)^{-1},\dotsc] \subset k(s)$, 
we see that \[ \eO(\bbA^1-\bbZ) \ot_{k[s]} \fB' = \eO(\bbA^1-\bbZ) \subset k(s) \] is the identity 
object, proving (2). 
 
The direct sums in (3) only depend on the class $\wbar \chi$ of $\chi$ in $k/\bbZ$.
If $\wbar\chi = 0+\bbZ$ we will assume that $\chi=0$. 
Let $\fB_i \subset \fB'$ denote the $k[s]$-submodule generated by $\frac 1 {s-i}$. 
Then $\fB'/\fB_i$ is isomorphic to the direct sum of skyscraper modules $k[s]/(s-j)$ for 
integers $j\ne i$. Thus $(\fB'/\fB_i) \ot_{k[s]} k[s]/(s-\chi-i)^n = 0$. 
On the other hand $\fB_i$ is free, so $\fB' \ot_{k[s]} k[s]/(s-\chi-i)^n$ is free 
with generator $\frac 1 {s-i} \ot 1$. These basis elements give
our desired isomorphism, which evidently commutes with the action of $T$.
\end{proof}

\subsection{Monodromization} The $\bbG_m$-action on $V$ induces
an algebra map $k[s] \to \eD_V$, where  
$s = x\partial_x$ is the invariant vector field on $\bbG_m$.
We say that $M \in \eM(V)$ is monodromic if every local section $m \in M$
is killed by some nonzero polynomial in $s = x \partial_x$. 
In other words, $M$ is monodromic if it is a torsion module over $k[s]$. 
This definition of monodromic was introduced by Verdier \cite{V130}.
Define an object of $D\eM(V)$ to be monodromic if each of its cohomology $\eD$-modules
is monodromic. We denote this full subcategory by $D_\mon \eM(V)\subset D\eM(V)$. 
 
For any $\chi \in k$ and $n \in \bbN$, let 
$A_{\chi,n} \subset k(s)$ consist of those rational functions with poles
of order $\le n$ at $\chi + \bbZ$ and no other poles. 
Define $I^{0,n}_\chi \in \eM(\bbG_m)$ to be the inverse Mellin transform 
$\fM^{-1}(A_{\chi,n}/k[s])$. 
The inclusions $A_{\chi,n} \to A_{\chi,n+1}$ induce morphisms $I^{0,n}_\chi
\to I^{0,n+1}_\chi$, which form an inductive system of $\eD$-modules.
Define 
\[ I^0 = \bigoplus_{\wbar\chi \in k/\bbZ} \dlim_n I^{0,n}_\chi \in \eM(\bbG_m) \]
where $\chi\in k$ is any lift of $\wbar\chi$. It follows that
$\fM(I^0) = k(s)/k[s]$.

Let $\underline 1$ be the unit object in the monoidal category $D\eM(\bbG_m)$, so  
$\fM(\underline 1) = k[s]$. The canonical extension of
$k(s)/k[s]$ by $k[s]$ defines an extension of $I^0$ by $\underline 1$ and therefore a
morphism \[\vareps :I^0\to \underline 1[1].\]
The monoidal category $D\eM(\bbG_m)$ acts on $D\eM(V)$ by convolution (without
compact support).

\begin{lem} \label{lem:1mon}
An object $M \in D\eM(V)$ is monodromic if and only if the morphism 
$I^0 * M \to M[1]$ induced by $\vareps$ is an isomorphism.
\end{lem}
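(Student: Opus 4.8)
The plan is to reinterpret $\vareps$ as a boundary map and thereby reduce the lemma to the vanishing of a single convolution. Set $Q=\fM^{-1}(k(s))\in\eM(\bbG_m)$. By construction, the extension of $I^0$ by $\underline 1$ that defines $\vareps$ is the inverse Mellin transform of the tautological sequence $0\to k[s]\to k(s)\to k(s)/k[s]\to 0$ (recall $\fM(\underline 1)=k[s]$ and $\fM(I^0)=k(s)/k[s]$), so there is a distinguished triangle $\underline 1\to Q\to I^0\xrightarrow{\vareps}\underline 1[1]$ in $D\eM(\bbG_m)$. Since convolving with $M$ is exact, we get a distinguished triangle $M\to Q*M\to I^0*M\to M[1]$ whose last arrow is the morphism induced by $\vareps$; hence $I^0*M\to M[1]$ is an isomorphism if and only if $Q*M\cong 0$. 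It therefore suffices to show that $Q*M\cong 0$ if and only if $M$ is monodromic.

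First suppose $Q*M\cong 0$. Then $I^0*M\cong M[1]$, so it is enough to check that $I^0*M$ is always monodromic. The external product $I^0\bt M$ on $\bbG_m\xt V$ is monodromic for the $\bbG_m$-action on the first factor: $\fM(I^0)=k(s)/k[s]$ is $s$-torsion, so $I^0$ is monodromic on $\bbG_m$, and a local section of $I^0\bt M$, being locally a finite sum $\sum a_i\bt b_i$, is killed by the product of polynomials in $s$ killing the $a_i$. Since $\theta(1)$ is $\bbG_m$-equivariant for this action and the scaling action on $V$, and de Rham pushforward along a $\bbG_m$-equivariant morphism preserves monodromicity, $I^0*M=\theta(1)_*(I^0\bt M)$ is monodromic; hence so is $M$.

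Now suppose $M$ is monodromic. One first reduces to the case where $M$ is a single coherent monodromic $\eD$-module: both ``$Q*M\cong 0$'' and ``$M$ monodromic'' cut out triangulated subcategories closed under colimits, monodromic complexes are built from monodromic $\eD$-modules by truncation, $Q*(-)$ commutes with filtered colimits, and every monodromic $\eD$-module is the union of its coherent monodromic submodules. The essential input is then the vanishing
\[ R\Gamma_{\mathrm{dR}}(\bbG_m,\, Q\ot^! E)\cong 0\qquad\text{for every local system }E\text{ on }\bbG_m, \]
which holds because $R\Gamma_{\mathrm{dR}}(\bbG_m,-)$ is computed by the action of the invariant vector field $s=x\partial_x$, and on $Q\ot^!E$ this action is invertible: on $Q$ alone, $s$ acts on $\Gamma(\bbG_m,Q)=k(s)$ as multiplication by the unit $s$, and twisting by $E$ only perturbs it by the (finitely many) exponents of $E$, which keeps it invertible over the field $k(s)$. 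To deduce $Q*M\cong 0$, I would pass to the fibers of $\theta(1)\colon\bbG_m\xt V\to V$. Via the isomorphism $(\pr_1,\theta(1))\colon\bbG_m\xt V\xrightarrow{\ \sim\ }\bbG_m\xt V$, every fiber of $\theta(1)$ is a copy of $\bbG_m$, and the restriction of $Q\bt M$ to the fiber over $v$ is $Q\ot^!\gamma_v^*M$, where $\gamma_v\colon\bbG_m\to V$ sends $\tau$ to $\tau^{-1}v$; so by base change $(Q*M)|_v\cong R\Gamma_{\mathrm{dR}}(\bbG_m,Q\ot^!\gamma_v^*M)$ up to shift. If $v$ lies off the zero section, then $\gamma_v$ is an isomorphism onto the $\bbG_m$-orbit of $v$, and since $*$-restriction to a $\bbG_m$-stable subvariety preserves monodromicity (the Euler field restricts to the Euler field) and $M$ is coherent, $\gamma_v^*M$ is a complex of local systems on $\bbG_m$; the input above (extended by dévissage) then gives $(Q*M)|_v\cong 0$. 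Over the zero section $0(S)$, $\theta(1)$ restricts to the projection $\bbG_m\xt 0(S)\to 0(S)$ and $Q\bt M$ to $Q\bt(M|_{0(S)})$, so by the Künneth formula $(Q*M)|_{0(S)}\cong R\Gamma_{\mathrm{dR}}(\bbG_m,Q)\ot_k(M|_{0(S)})\cong 0$. Hence $Q*M\cong 0$.

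\emph{The main obstacle} is not the conceptual content — that is already the identity $R\Gamma_{\mathrm{dR}}(\bbG_m,Q)\cong 0$ — but the bookkeeping needed to transport it to $V$: verifying the base-change and Künneth identities for $\eD$-module pushforward along $\theta(1)$, treating the zero section separately, and carrying out the routine reduction from complexes to coherent monodromic $\eD$-modules. If a ``Mellin transform for $V$'' identifying $D_\mon\eM(V)$ with $s$-torsion modules on $[V/\bbG_m]$ and intertwining the $D\eM(\bbG_m)$-action with $\ot_{k[s]}$ is available, this implication becomes immediate, as then $Q*M$ corresponds to $k(s)\ot_{k[s]}(\text{$s$-torsion})=0$.
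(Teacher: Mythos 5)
Your argument is correct in outline, but it takes a genuinely different and considerably more laborious route than the paper. The paper proves the lemma in essentially one step: a calculation with the relative de Rham complex of $\theta(1)$ shows that for \emph{any} $N\in D\eM(\bbG_m)$ and $M\in D\eM(V)$ there is a canonical isomorphism $N*M\cong\fM(N)\ot_{k[s]}M$ in the derived category of $k[s]$-modules (where $s$ acts on $M$ via the Euler field). Applying this to the triangle $\underline 1\to Q\to I^0\xrightarrow{\vareps}\underline 1[1]$ identifies the cocone of $I^0*M\to M[1]$ with $k(s)\ot_{k[s]}M$, and since $k(s)$ is flat over $k[s]$, this vanishes precisely when each cohomology of $M$ is $s$-torsion. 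Your closing sentence — ``if a Mellin transform for $V$ were available, this implication becomes immediate'' — is exactly this mechanism, so you have correctly identified the shortcut without realizing it is already in reach. What you do instead is replace that one identity by a fiber-by-fiber computation: reduce to coherent monodromic $\eD$-modules, prove $R\Gamma_{\mathrm{dR}}(\bbG_m,Q\ot^!E)=0$ for local systems $E$, transport it to $V$ via $!$-base change along the closed fibers of $\theta(1)$ (off the zero section, after straightening by $(\pr_1,\theta(1))$) and Künneth over the zero section, and for the converse show directly that $I^0*M$ is always monodromic via $\bbG_m$-equivariance of $\theta(1)$. All of these steps are sound, and the $\ot^!$/``up to shift'' hedging shows you are tracking the relevant subtleties, but the dévissage, the separate treatment of the zero section, and the equivariance argument are all bookkeeping that the paper's single $k[s]$-linear identity renders unnecessary. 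The paper's proof also has the advantage of not requiring a reduction to coherent objects or any holonomicity/six-functor input beyond the relative de Rham description of $\theta(1)_*$.
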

\begin{proof}
A calculation using the relative de Rham complex with respect to the action map $\bbG_m \xt V \to V$ 
shows that for any $M \in D\eM(V)$ and $N \in D\eM(\bbG_m)$, 
there is a canonical isomorphism $N * M \cong \fM(N) \ot_{k[s]} M$ in the derived category of (sheaves of)
$k[s]$-modules. 
This implies that the cocone of the morphism $I^0 * M \to M[1]$ is isomorphic (in the derived
category of $k[s]$-modules) to $k(s) \ot_{k[s]} M$. 
But $k(s)$ is flat over $k[s]$, so the vanishing of the cohomologies of $k(s)\ot_{k[s]} M$ is
equivalent to the cohomologies of $M$ being torsion modules over $k[s]$. 
\end{proof}

See \cite{Beilinson}, \cite{Lichtenstein}, and \cite[C.2]{DG} for 
further details in the unipotently monodromic case (when $\chi=1$). 

\begin{lem} \label{lem:Dmodmon}
There exists an inductive system of isomorphisms 
	\[ I^{0,n}_\chi * B \cong j_! I^{0,n}_\chi \cong I^{0,n}_\chi * \eL. \] 
\end{lem}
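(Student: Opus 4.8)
The plan is to prove Lemma \ref{lem:Dmodmon} by reducing everything to a Mellin-transform computation, where it becomes the statement that tensoring $\fB'$ (or the Mellin transform $\fM(\eL)$) with the skyscraper $A_{\chi,n}/k[s]$ recovers the Mellin transform of $j_! I^{0,n}_\chi$. First I would record what $\fM(j_!I^{0,n}_\chi)$ is: since $\fM$ is exact and monoidal, the exact triangle $j_! j^* \to \id \to 0_*$ on $\bbA^1$ (equivalently $1_!\to \underline 1 \to$ skyscraper in the $\bbG_m$-picture), convolved with $I^{0,n}_\chi$, together with $\fM(I^{0,n}_\chi) = A_{\chi,n}/k[s]$, pins down $\fM(j_!I^{0,n}_\chi)$ as a subquotient of $k(s)/k[s]$, or more concretely as $j_! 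I^{0,n}_\chi$ itself (the functor $j_!$ on monodromic $\eD$-modules corresponds on the Mellin side to deleting the pole at the integer points coming from $0^*$, cf.\ Lemma~\ref{lem:1mon}). The point is that both $B$ and $\eL$ become, after Mellin transform, $\bbZ$-equivariant $k[s]$-modules that are \emph{isomorphic to $k[s]$ after inverting $s+\bbZ$} (by Proposition~\ref{prop:Dmod}(2) for $\fB'$, and the analogous fact for $\fM(\eL)$, which is classical — $\eL$ is invertible for the convolution on $\bbG_m$ away from the augmentation, so $\fM(\eL)$ is a rank-one $k[s]$-lattice in $k(s)$).

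Concretely, I would carry out the following steps. Step one: use $N * M \cong \fM(N)\ot_{k[s]} M$ (the isomorphism from the proof of Lemma~\ref{lem:1mon}) and $\fM(L*K)\cong \fM(L)\ot_{k[s]}\fM(K)$ to translate $I^{0,n}_\chi * B$ and $I^{0,n}_\chi * \eL$ into $(A_{\chi,n}/k[s])\ot_{k[s]} \fB'$ and $(A_{\chi,n}/k[s])\ot_{k[s]} \fM(\eL)$ respectively, as objects of $\QCoh(\bbA^1)^\bbZ$ (one should check these tensor products are underived, i.e.\ the relevant $\mathrm{Tor}$'s vanish; this follows because $\fB'$ is a filtered union of free $k[s]$-modules — the $\fB_i$ of the previous proof — and similarly $\fM(\eL)$ is a lattice). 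Step two: identify $\fM(j_!I^{0,n}_\chi)$. The object $I^{0,n}_\chi$ has Mellin transform $A_{\chi,n}/k[s]$ supported (set-theoretically, as a $k[s]$-module) on $\chi+\bbZ$; convolving the triangle $1_!R \to \underline 1 \to (\text{skyscraper at }0)$ shows $j_!I^{0,n}_\chi$ has Mellin transform the kernel of $A_{\chi,n}/k[s] \to k[s]/(s-i)$-type quotients, i.e.\ an explicit $\bbZ$-equivariant module. Step three: for each of $\fB'$ and $\fM(\eL)$, use the local-at-$\chi+i$ triviality — exactly the mechanism of Proposition~\ref{prop:Dmod}(3) — to build compatible isomorphisms $(A_{\chi,n}/k[s])\ot_{k[s]}\fB' \cong \fM(j_!I^{0,n}_\chi)$ and likewise for $\fM(\eL)$, compatibly in $n$. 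Step four: apply $\fM^{-1}$.

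For the $\eL$ half I expect the cleanest route is not to recompute $\fM(\eL)$ from scratch but to invoke Lemma~\ref{lem:mon-AS}'s $\eD$-module analog reasoning: one already knows $\iota^*j^*\eL * \eL \cong B$ from the Proposition just before this lemma, and $\eL$ is $*$-invertible on $\bbG_m$ away from the augmentation ideal, which forces $\fM(\eL)$ to be a $\bbZ$-equivariant rank-one lattice in $k(s)$ whose class in $\mathrm{Pic}$ of the relevant localizations is trivial; then the same Step three argument applies verbatim. Alternatively one can just observe that $\iota^*j^*\eL * \eL \cong B$ convolved with $I^{0,n}_\chi$ (note $\iota^* I^{0,n}_\chi \cong I^{0,-\chi,n}_\chi$, and $\iota^*j^*\eL$ is invertible on the nose over $\bbG_m$) immediately gives $I^{0,n}_\chi * B \cong (\text{invertible}) * I^{0,n}_\chi * \eL$, and one checks the invertible twist is trivial on the Mellin side by a pole-order count at $\chi+i$, just as in the proof of Lemma~\ref{lem:mon-AS} where the Grothendieck–Ogg–Shafarevich formula forced the fiber dimension to be $1$.

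The main obstacle I anticipate is the bookkeeping in Step two and Step three: making the isomorphisms genuinely compatible with the $\bbZ$-action (the $T$-equivariance) and with the transition maps $I^{0,n}_\chi \to I^{0,n+1}_\chi$ as $n$ grows, so that one obtains an isomorphism of \emph{inductive systems} and not merely term-by-term. This is the same kind of issue handled at the end of the proof of Lemma~\ref{lem:mon-AS} via surjectivity of the transition maps, and I would resolve it the same way — choose the generators $\frac{1}{s-i}\ot 1$ canonically (as in the proof of Proposition~\ref{prop:Dmod}(3)) so that the identifications are forced to be compatible across $n$ and equivariant under translation. A secondary, more minor point is verifying that the tensor products in Step one are underived; this is where the explicit freeness of the $\fB_i$ and of $\fM(\eL)$-lattices does the work.
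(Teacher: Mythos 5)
Your core strategy --- reduce to $\bbG_m$, pass to Mellin transforms via monoidality, invoke Proposition~\ref{prop:Dmod}(3) for the $B$ side, and run a parallel local-at-$\chi+i$ computation for the $\eL$ side --- is exactly what the paper does. Two points in your write-up, however, need correction.

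First, Step two is garbled. The Mellin transform $\fM$ is a functor on $\eM(\bbG_m)$, not $\eM(\bbA^1)$, so ``$\fM(j_!I^{0,n}_\chi)$'' is not directly defined and the convolution-triangle maneuver you sketch is unnecessary. The clean reduction (and the one the paper uses) comes first: because $h_*B = h_*\eL = 0$, the K\"unneth formula gives $h_*(I^{0,n}_\chi * B) = h_*(I^{0,n}_\chi * \eL) = 0$; these complexes are monodromic, so $0^* \cong h_*$ vanishes and the triangle $j_!j^* \to \id \to 0_*0^*$ forces both to be the $j_!$-extension of their restrictions to $\bbG_m$, just as $j_!I^{0,n}_\chi$ is. After that reduction, the Mellin transform of the middle term is just $\fM(I^{0,n}_\chi) = A_{\chi,n}/k[s]$, with no subquotient bookkeeping to do.

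Second, your alternative route rests on a false claim: $\iota^*j^*\eL$ is \emph{not} $*$-invertible in $D\eM(\bbG_m)$. Under $\fM$, $*$-invertibility corresponds to invertibility in $D(\QCoh(\bbA^1)^\bbZ)$ for $\ot_{k[s]}$, hence to being (a shift of) a rank-one projective $k[s]$-module. But $\fM(j^*\eL) = E = \eD/(1-T^{-1}s)\eD$ is an increasing union $\bigcup_i E_i$ of free rank-one $k[s]$-submodules that is not finitely generated, hence not projective; the same is true of $\fB'$ (this is Proposition~\ref{prop:Dmod}(1)). So the ``invertible twist'' shortcut does not apply. Your primary route --- checking that $E/E_i$ tensors to zero against $k[s]/(s-\chi-i)^n$ and that the free $E_i$ gives a rank-one quotient, compatibly in $n$ and $T$-equivariantly --- is the correct one, and it is precisely how the paper handles the $\eL$ half. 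Your observation that flatness of $\fB'$ and $E$ (as filtered colimits of free rank-one modules) guarantees the tensor products are underived is also correct and fills a point the paper leaves implicit.
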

\begin{proof} 
Since $h_*B = h_*\eL = 0$, it suffices as in Lemma \ref{lem:mon-B} to 
give isomorphisms of the above objects after restriction to $\bbG_m$. 
In fact, it suffices to construct isomorphisms between the Mellin transforms of these
restrictions, i.e., isomorphisms $\fM(I^{0,n}_\chi * j^*B) \cong \fM(I^{0,n}_\chi)
\cong \fM(I^{0,n}_\chi * j^*\eL)$. This is equivalent to
constructing isomorphisms 
\begin{align} \label{eq:1}
	\fM(I^{0,n}_\chi) \ot_{k[s]} \fB &\cong \fM(I^{0,n}_\chi),\quad \fB := \fM(j^*B), \\
	\fM(I^{0,n}_\chi) \ot_{k[s]} E &\cong \fM(I^{0,n}_\chi),\quad E := \fM(j^*\eL). 
\label{eq:2} 
\end{align}
Note that we have isomorphisms
\begin{equation}\label{eq:3}
	\fM(I^{0,n}_\chi) = A_{\chi,n}/k[s] \cong \bigoplus_{i\in \bbZ} k[s]/(s-\chi-i)^n. 
\end{equation}
Combining \eqref{eq:3} and Proposition \ref{prop:Dmod}(3), one gets \eqref{eq:1}. 
Let us construct \eqref{eq:2}. We have 
\[ E = \eD/(1-T^{-1}s)\eD. \]
Let $\mathbf 1$ be the generator of $E$. 
Let $E_i \subset E$ denote the free $k[s]$-submodule generated by $\mathbf 1 T^{-i-1}$
for $i \in \bbZ$. If $\chi \in \bbZ$, set $\chi=0$.
From the relation $\mathbf 1 T^{-i} = \mathbf 1 T^{-i-1}(s-i)$, we deduce that
$E/E_i$ is supported away from $\chi+i$, so $(E/E_i) \ot_{k[s]} k[s]/(s-\chi-i)^n = 0$. 
Hence $E \ot_{k[s]} k[s]/(s-\chi-i)^n$ is freely generated by $\mathbf 1 T^{-i-1} \ot 1$, and 
this gives us \eqref{eq:2}.
\end{proof}

Lemma \ref{lem:Dmodmon} implies in particular that $I^0 * B \cong I^0 * \eL$. 
We deduce from Lemma \ref{lem:1mon} that $\Four_{V/S,B}$ agrees with $\Four_{V/S,\eL}$ 
on $D_\mon\eM(V)$.

\begin{cor} \label{cor:FB-FL}
There is an isomorphism 
\[ \Four_{V/S,B} \cong \Four_{V/S,\eL} \]
of functors $D_\mon\eM(V) \to D_\mon \eM(V^\vee)$. 
\end{cor}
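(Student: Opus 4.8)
The plan is to deduce Corollary \ref{cor:FB-FL} directly from the combination of Lemma \ref{lem:Dmodmon} and Lemma \ref{lem:1mon}, which together give us everything we need. First I would observe that Lemma \ref{lem:Dmodmon} produces, for each $\chi$ and $n$, a compatible system of isomorphisms $I^{0,n}_\chi * B \cong I^{0,n}_\chi * \eL$, and taking the inductive limit over $n$ and the direct sum over $\wbar\chi \in k/\bbZ$ yields an isomorphism $I^0 * B \cong I^0 * \eL$ of objects in $D\eM(\bbG_m)$. Convolving on the right with an arbitrary $M \in D_\mon\eM(V)$ (using that $D\eM(\bbG_m)$ acts on $D\eM(V)$ by convolution without compact support) gives $I^0 * B * M \cong I^0 * \eL * M$.

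Next I would bring in the relation, established in the Proposition preceding this subsection, that $\Four_{V/S,B}(M) \cong \iota^*j^*\eL * \Four_{V/S,\eL}(M)$, and more to the point, I would work with the defining formulas directly. The cleaner route is this: by the projection-formula/proper-base-change manipulation (exactly parallel to how $\Four_B$ is rewritten in \S\ref{sect:F^2}), convolution with $B$ appears inside $\Four_{V/S,B}$ in a way that lets us write $\Four_{V/S,B}(M)$ in terms of $B * M$ and similarly $\Four_{V/S,\eL}(M)$ in terms of $\eL * M$; then it suffices to produce a functorial isomorphism $B * M \cong \eL * M$ for $M$ monodromic. To get that, I would convolve the isomorphism $I^0 * B \cong I^0 * \eL$ on the right with $M$ and then use Lemma \ref{lem:1mon}: since $M$ is monodromic, the map $I^0 * (B * M) \to (B*M)[1]$ need not be an isomorphism because $B * M$ itself need not be monodromic, so instead I would first convolve $B$ and $\eL$ with the monodromic object $M$ and only then apply $I^0 * (-)$. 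More carefully: $B * M$ and $\eL * M$ are the objects we want to compare; apply $I^0 * (-)$ to both; the isomorphism $I^0 * B \cong I^0 * \eL$ gives $I^0 * B * M \cong I^0 * \eL * M$; now I need to descend this isomorphism from the monodromizations back to $B * M$ and $\eL * M$ themselves.

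The descent is where a little care is needed, and I expect it to be the main (though mild) obstacle. The point is that $B * M$ is \emph{not} monodromic in general — $B$ is not monodromic and $h_*B = 0$ only kills the "average" — so one cannot simply invoke Lemma \ref{lem:1mon} on $B * M$. The fix is to stay at the level of functors on monodromic objects: $\Four_{V/S,B}$ and $\Four_{V/S,\eL}$ both land in $D_\mon\eM(V^\vee)$ when restricted to $D_\mon\eM(V)$ (which follows because the convolution formulas only involve $B * M$ and $\eL * M$ integrated against $\mu^!$, and one checks monodromicity of the output directly from the $\bbG_m$-equivariance of $\mu$, as in the proof of Theorem \ref{thm:mon}). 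On the monodromic subcategory, $I^0 * (-)[-1]$ is the identity functor by Lemma \ref{lem:1mon}, hence an equivalence; therefore the isomorphism $I^0 * B * M \cong I^0 * \eL * M$ — which is the isomorphism $I^0 *(-)[-1]$ applied to $B*M$ versus $\eL * M$ \emph{after} these have been pushed into $D_\mon\eM(V^\vee)$ via the Fourier formulas — descends to an isomorphism $\Four_{V/S,B}(M) \cong \Four_{V/S,\eL}(M)$. Finally I would check functoriality in $M$: all the isomorphisms above are induced by the single morphism-level isomorphism $I^0 * B \cong I^0 * \eL$ convolved with $M$, together with the canonical structure maps, so naturality is automatic. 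This gives the stated isomorphism of functors $D_\mon\eM(V) \to D_\mon\eM(V^\vee)$.
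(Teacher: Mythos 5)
Your high-level strategy is the paper's: combine $I^0 * B \cong I^0 * \eL$ (Lemma \ref{lem:Dmodmon}) with Lemma \ref{lem:1mon} to descend to the monodromic subcategory. But the middle of your argument contains a genuine confusion that derails it. You repeatedly speak of ``$B * M$'' and claim that, as in \S\ref{sect:F^2}, $\Four_{V/S,B}(M)$ can be ``written in terms of $B * M$.'' This is not the case. What \S\ref{sect:F^2} rewrites is the \emph{square} $\Four_B^2(M)$, which becomes a convolution $j^*B * M$; the single transform $\Four_{V/S,B}(M)$ is a bona fide integral transform with kernel $\mu^!B$ and is \emph{not} of the form $B * M$. (Indeed ``$B*M$'' with $M \in D\eM(V)$ does not even typecheck under the convolution action, since $B$ lives on $\bbA^1$ rather than $\bbG_m$, and the output would live on $V$, not $V^\vee$.) Consequently the phrase ``$B*M$ versus $\eL * M$ after these have been pushed into $D_\mon\eM(V^\vee)$ via the Fourier formulas'' does not describe a real operation, and the descent step you present does not go through as written.

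The missing ingredient is the identity that transports $I^0$ through the Fourier kernel:
\[
I^0 * \Four_{V/S,B}(M) \;\cong\; \pr^\vee_*\bigl(\pr^!M \ot^! \mu^!(I^0 * B)\bigr)[1-d],
\]
where on the left $I^0$ acts on $D\eM(V^\vee)$ and on the right $I^0 * B$ is an object of $D\eM(\bbA^1)$. This follows from base change and the projection formula applied to the Cartesian square coming from the $\bbG_m$-equivariance $\mu(\lambda\xi,v)=\lambda\,\mu(\xi,v)$ of the pairing. With this formula (and the same one for $\eL$), the isomorphism $I^0*B \cong I^0*\eL$ immediately yields $I^0 * \Four_{V/S,B}(M) \cong I^0 * \Four_{V/S,\eL}(M)$ for all $M$, with no monodromicity assumption. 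Then, since $\Four_{V/S,B}$ and $\Four_{V/S,\eL}$ both preserve monodromicity (which you correctly note), Lemma \ref{lem:1mon} allows you to remove the $I^0$ from both sides when $M$ is monodromic, giving the corollary. Your instinct that one must be careful because $B*M$ is not monodromic was a good one, but the fix is not about $B*M$ at all: it is about applying $I^0*(-)$ to the full Fourier transform and proving that this equals the Fourier transform computed with the ``monodromized kernel'' $I^0*B$.
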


\appendix

\section{The monodromic subcategory} \label{append}

In this appendix we prove the facts we need about (non-unipotently) monodromic complexes.
For a more complete account of the unipotently monodromic story, see \cite{BY,Beilinson}.

\subsection{Free monodromic objects}
Let $p$ be the characteristic of $k$, which may be $0$. 
For $p \nmid n$, let $A^0_n$ be the group algebra $R[\mu_n]$ considered as a sheaf on $\spec k$, 
i.e., a $\Gal(\bar k/k)$-module. Put \[ A^0 = \ilim_{p\nmid n} A^0_n. \] 
Consider $\bbT: =\ilim_{p\nmid n} \mu_n(\bar k)$ the tame fundamental group of $\bbG_{m,\bar k}$.
For any $\gamma \in \bbT$, let $\wtilde \gamma$ denote the corresponding invertible element in $A^0(\bar k)$. 
Pick a topological generator $t \in \bbT$. 
Note that $\wtilde t -1$ is not a zero divisor in $A^0$, so $A^0$ injects to the localization 
$A = (A^0)_{\wtilde t-1}$. Define \[ A^i = (\wtilde t-1)^i A^0 \subset A \] 
for $i \in \bbZ$ and set $A^i_n = A^i \ot_{A^0} A^0_n$ for $p \nmid n$. 
The definition of $A^i$ is independent of the choice of $t$, and $A^i$
is a $\Gal(\bar k/k)$-module. Note that
$A^1$ is the kernel of the quotient map $A^0 \to A^0_1 = R$.

\begin{rem} The ring $A^0(\bar k)$ is isomorphic to the product
	of the completions of $R[t,t^{-1}]$ at all maximal ideals $\fm$ such that
	$t^n \equiv 1 \bmod \fm$ for some $p \nmid n$. The maximal ideals $\fm$ correspond
	to the eigenvalues of the monodromy action.
\end{rem}

For $i \in \bbZ$ and $p \nmid n$, let $I^i_n$ be the local system on $\bbG_m$ such that the  
fiber at $1 \in \bbG_m(k)$ is $A^i_n$ and the monodromy action of $\gamma \in \bbT$ 
is multiplication by $\wtilde \gamma$. We define $I^i$ to be the pro-sheaf 
\[ \prolim_{p\nmid n} I^i_n. \] 

\begin{rem}\label{rem:Itorsor}
After base change from $\spec k$ to $\spec \bar k$, 
the local systems $I^0_n$ and $I^i_n$ are isomorphic via
multiplication by $(\wtilde t-1)^i$, and this induces an isomorphism $I^0 \cong I^i$. 
The isomorphism is not canonical as it depends on the choice of $t$.
\end{rem}

\begin{lem} \label{lem:mon}
There is a canonical isomorphism of pro-objects
\[ I^0 * M \cong M(-1)[-2] \] 
for $M \in D^b_\mon(V)$ considered as a constant pro-object. 
\end{lem}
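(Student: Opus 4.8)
The statement claims a canonical isomorphism $I^0 * M \cong M(-1)[-2]$ of pro-objects for monodromic $M$. The plan is to reduce to the universal case and then exploit the defining exact triangle relating $I^0$ to the unit object of the convolution category. Recall that the monodromy structure gives a canonical morphism $\vareps : I^0 \to R[1]$ (the ``augmentation'' coming from the extension class of $k(s)/k[s]$ by $k[s]$ in the $\eD$-module picture, or here from the extension of $I^0$ by the unit $\delta_1$-type object). First I would make precise the $\ell$-adic analog of this augmentation: since $A^1 = \ker(A^0 \to R)$, there is a short exact sequence of pro-local-systems $0 \to I^1 \to I^0 \to \delta_1 \to 0$, where $\delta_1 = 1_!R$ is the skyscraper at $1 \in \bbG_m$, which is the unit for convolution $*$. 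This gives a canonical triangle $I^1 \to I^0 \to 1_!R \to I^1[1]$ in the pro-category $D_{ctf}(\bbG_m)$.

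Second, I would convolve this triangle with $M$ to get a triangle $I^1 * M \to I^0 * M \to M \to I^1*M[1]$, using that $1_!R * M \cong M$. So it suffices to identify $I^1 * M$ and the connecting map. Here is where monodromicity enters: $M$ being monodromic means $\theta(n)^*M \cong \pr_2^*M$ for some $n$, equivalently (after passing to the limit over $n$ and over the tame tower) the action of $I^0$ — or rather of the full tame group algebra — on $M$ factors through a ``locally nilpotent'' action in the appropriate sense. The cleanest route is: by the projection formula $I^i * M \cong \theta(1)_!(I^i_n \bt M)$, and since $M$ is monodromic there is a canonical trivialization of the $\bbG_m$-monodromy on $\theta(1)^*M$ compatible with $n$, so $I^i * M$ can be computed as $H^*_c(\bbG_m, I^i_n) \ot M$ in the limit. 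The key input is the cohomology computation $H^*_c(\bbG_{m,\bar k}, I^0) \cong A^0(-1)[-2]$ as a pro-object (all lower cohomology vanishing because $I^0_n$ is the ``free'' tame local system and $\bbG_m$ is an affine curve, with $H^2_c$ giving the coinvariants $A^0_n/(\wtilde t - 1) = R$ — wait, one must be careful: the coinvariants are $R$, but in the pro-limit over $n$ the transition maps are multiplication by norms, and the relevant pro-object is $A^0$, not $R$, because one is taking $\prolim$ not the individual fibers). This is the technical heart: showing $\prolim_n R\Gamma_c(\bbG_{m,\bar k}, I^0_n) \cong A^0(-1)[-2]$ with the pro-structure, and that this is compatible with descent to $k$.

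Third, granting the cohomology computation, I would conclude: $I^0 * M \cong (\prolim_n R\Gamma_c(\bbG_m, I^0_n)) \ot_{A^0} M' \cong A^0(-1)[-2] \ot_{A^0} M'$ where $M'$ records the monodromic structure; but tensoring the free rank-one module $A^0$ against the $A^0$-module structure on $M$ coming from monodromy recovers just $M$ itself (the monodromy module structure on $M$, pro-completed, is ``co-free of rank one'' over the relevant Hecke-type algebra precisely by the definition of monodromic via $\theta(n)^*M \cong \pr_2^*M$), yielding $I^0 * M \cong M(-1)[-2]$. The canonicity comes from canonicity of the augmentation triangle and of proper base change/projection formula isomorphisms — no choice of topological generator $t$ is needed for this statement, unlike Remark~\ref{rem:Itorsor}, because $I^0$ itself (as opposed to its identification with $I^i$) is canonical.

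\textbf{Main obstacle.} The delicate point is handling the pro-object structure in the cohomology computation: for a fixed $n$, $R\Gamma_c(\bbG_{m,\bar k}, I^0_n)$ is \emph{not} simply $A^0_n(-1)[-2]$ on the nose with the right maps — one must check that the transition maps $I^0_{n'} \to I^0_n$ (for $n \mid n'$) induce on $H^2_c$ precisely the norm/transfer maps $A^0_{n'} \to A^0_n$ that define $A^0 = \ilim A^0_n$ as a pro-ring, and that $H^0_c = H^1_c = 0$ stably (the $H^1$ vanishing is the substantive part and uses that $I^0_n$ has no global sections with compact support and is ``free'' over $A^0_n$, i.e.\ a sum of all tame characters with multiplicity governed by $A^0_n$). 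Equivalently, one can phrase everything through the equivalence between monodromic pro-sheaves and $A^0$-modules (the $\ell$-adic shadow of Corollary~\ref{cor:Locf}) and verify the statement as an identity of $A^0$-modules, which I expect is the most painless packaging.
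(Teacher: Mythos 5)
The paper's proof is short and hinges on one identity you did not use: $I^0_n \cong e_{n!}R$ where $e_n:\bbG_m \to \bbG_m$ is the $n$-th power map. From this one gets $I^0_n * M = \theta(1)_!(e_{n!}R \bt M) = \theta(n)_!\pr_2^*M$; for $n$ divisible by $n_0$ the monodromicity hypothesis $\theta(n)^*M \cong \pr_2^*M$ lets one replace $\theta(n)$ by $\pr_2$, so $I^0_n * M \cong \pr_{2!}\pr_2^*M \cong \Gamma_c(\bbG_m,R) \ot M$. The pro-system $\prolim\Gamma_c(\bbG_m,R)$ is essentially constant and $\cong R(-1)[-2]$ (Verdier, Lemme~5.2): the transition maps are the identity on $H^2_c = R(-1)$ and multiplication by $n'/n$ on $H^1_c = R$, so the $H^1_c$-part dies in the pro-limit. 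That finishes it.

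Your proposal has the right general flavor (monodromicity $+$ a cohomology computation) but the key computation is wrong, and the wrong computation is then paired with a wrong bookkeeping formula so that the two errors cancel. Concretely: you claim $\prolim_n R\Gamma_c(\bbG_{m,\bar k}, I^0_n) \cong A^0(-1)[-2]$. Since $R\Gamma_c(\bbG_m, I^0_n) = R\Gamma_c(\bbG_m, e_{n!}R) = R\Gamma_c(\bbG_m, R)$, each term is $R[-1]\oplus R(-1)[-2]$, and (as just noted) the transition maps on $H^2_c$ are the identity, not norms; so the pro-limit is $R(-1)[-2]$, with $A^0$ acting through the augmentation $A^0 \to R$. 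Your parenthetical correction (``the relevant pro-object is $A^0$ because one is taking $\prolim$'') does not repair this — the pro-limit of constant $R(-1)$ with identity transitions is $R(-1)$. Likewise the proposed formula $I^0 * M \cong (\prolim_n R\Gamma_c(\bbG_m, I^0_n)) \ot_{A^0} M'$ is not a valid identity: the stalk of $I^0_n * M$ at $v$ is the compactly supported cohomology of $I^0_n$ twisted by the restriction of $M$ to the $\bbG_m$-orbit through $v$, not $R\Gamma_c(\bbG_m,I^0_n)$ tensored against anything. You need the trick $I^0_n = e_{n!}R$ plus projection formula to untwist this, which is exactly what the paper does. (Your first idea, passing through the triangle $I^1 \to I^0 \to 1_!R$, is correctly abandoned: it reduces the problem to computing $I^1*M$, which is no easier.) Finally, the alternative packaging you suggest — proving the lemma through the equivalence of Corollary~\ref{cor:Locf} and a convolution-vs.-$\ot_{A^0}$ formula — would require first extending Lemma~\ref{lem:Aot} to the action of $D_{ctf}(\bbG_m)$ on $D^b_c(V)$ for a general vector bundle $V$, and the verification there again comes down to the same base-change computation with $e_n$; so it is not a shortcut.
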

\begin{proof}
Let $e_n : \bbG_m \to \bbG_m$ denote the $n^\mathrm{th}$ power map. 
Note that $e_{n!}R \cong I^0_n$ for $p\nmid n$. 
Since $M$ is monodromic, there exists $n_0$ coprime to $p$ such that 
$\theta(n_0)^*M \cong \pr_2^*M$. Then
\[ \prolim  (e_{n!}R) * M \cong \prolim \theta(n)_!\pr_2^*M 
\cong M(-1)[-2], \]
where we use the fact that the pro-object $\prolim \Gamma_c(\bbG_m,R)$
is essentially constant and isomorphic to $R(-1)[-2]$ (cf. \cite[Lemme 5.2]{Verdier}).
\end{proof}

\subsection{Monodromic sheaves as $A^0$-modules} 
\label{sect:Amod}
Let $\Mod_\tau(A^0)$ denote the abelian category of sheaves of discrete $A^0$-modules
on $\spec k$, where $A^0$ is equipped with the projective limit topology, and 
let $\Sh(\bbG_m)$ denote the abelian category of sheaves of $R$-modules
on $\bbG_m$. We have a canonical exact functor 
\[ \Loc:\Mod_\tau(A^0) \to \Sh(\bbG_m). \]
Define another functor $\fM : \Sh(\bbG_m) \to \Mod_\tau(A^0)$ 
by \[ \fM(\eF) = \dlim h'_* e_{n,*}e_n^* \eF \]
where $h':\bbG_m \to \spec k$ is the structure map and $A^0$ acts on $e_{n,*}e_n^*\eF$ 
by transport of structure. 
We deduce from \'etale descent that $\Loc$ is left adjoint to $\fM$. 
Passing to derived categories, the derived functors are still adjoint, and
we also denote them by 
\[ \Loc : D\Mod_\tau(A^0) \leftrightarrows D(\bbG_m) : \fM. \]
Note that $\fM : D(\bbG_m) \to D\Mod_\tau(A^0)$ is equal to the composition of the 
exact functor $\dlim e_{n,*}e_n^*$ with the derived functor $h'_*$.

\begin{prop} The derived functor $\Loc : D\Mod_\tau(A^0) \to D(\bbG_m)$
is fully faithful.  
\end{prop}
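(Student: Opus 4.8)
The plan is to prove that $\Loc : D\Mod_\tau(A^0) \to D(\bbG_m)$ is fully faithful by showing that the unit of the adjunction $\mathrm{id} \to \fM \circ \Loc$ is an isomorphism. Since $\Loc$ is left adjoint to $\fM$, full faithfulness of $\Loc$ is equivalent to this unit being an isomorphism on all of $D\Mod_\tau(A^0)$. Because both functors are triangulated and commute with the relevant filtered colimits, and because every discrete $A^0$-module is a filtered colimit of $A^0_n$-modules for varying $n$ coprime to $p$, it suffices to check the unit is an isomorphism on (complexes of) $A^0_n$-modules, and then, by dévissage and compatibility with colimits, on $A^0_n$ itself placed in degree $0$. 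So the crux reduces to computing $\fM(\Loc(A^0_n)) = \fM(I^0_n)$ and checking the unit map $A^0_n \to \fM(I^0_n)$ is an isomorphism.

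First I would unwind the definition: $\fM(\eF) = \dlim_{p\nmid m} h'_* e_{m,*} e_m^* \eF$, with $h'_*$ the derived pushforward. Applied to $\eF = I^0_n = e_{n!}R$, proper base change along the Cartesian squares relating the power maps $e_m$, $e_n$, $e_{mn}$ identifies $e_{m,*}e_m^* e_{n!}R$ with a sheaf whose global cohomology I can compute via the Leray spectral sequence for $h' \circ e_{m}$, or directly: $h'_* e_{m,*} e_m^* I^0_n \cong \Gamma(\bbG_m, e_m^* I^0_n)$ computed \'etale-locally. The key input is that $\Gamma(\bbG_{m,\bar k}, \text{local system})$ in degrees $0,1$ is governed by invariants and coinvariants under the tame fundamental group $\bbT$, and that the transition maps in the colimit over $m$ systematically kill the $H^1$ contribution (this is the same mechanism as in Lemma \ref{lem:mon}, where $\prolim \Gamma_c(\bbG_m,R)$ is essentially constant — here dually the $\dlim$ of the $H^1$ terms involving the nontrivial characters vanishes, and the $H^0$ terms reassemble into $A^0_n$). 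Tracking the $A^0$-action by transport of structure through this computation should produce exactly $A^0_n$ back, with the unit map the identity.

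The main obstacle I expect is the bookkeeping of the $A^0$-module structure and the topology: one must verify that the $\dlim$ over $m$ of the discrete modules $h'_* e_{m,*} e_m^* I^0_n$ recovers $A^0_n$ not just as an abstract $R$-module but as a discrete $A^0$-module, which requires being careful that the transition maps are the natural ones and that the Galois action matches. A secondary subtlety is the passage to the derived level: I need $\fM \circ \Loc$ to be computed by the naive formula (which holds because $\Loc$ is exact and $\dlim e_{m,*}e_m^*$ is exact, so only $h'_*$ is derived), and I need to know $h'_* e_{m,*}e_m^* I^0_n$ is concentrated in degrees $0$ and $1$ with the higher cohomology disappearing in the colimit — this is where the tameness of $I^0_n$ at $\infty$ and the structure of $\bbT$ as $\ilim \mu_m$ are essential. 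Once the unit is an isomorphism on each $A^0_n$ in degree $0$, a standard argument with the compatible $t$-structures (or simply with the fact that $D\Mod_\tau(A^0)$ is generated under colimits and shifts by the $A^0_n$) upgrades this to all of $D\Mod_\tau(A^0)$, giving full faithfulness.
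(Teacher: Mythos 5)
Your proposal follows essentially the same route as the paper: reduce to the unit of the $(\Loc,\fM)$-adjunction, use commutation with filtered colimits to get down to finite modules in degree~$0$, and then show that $\dlim_m H^0\Gamma(\bbG_m, e_{m,*}e_m^* \cdot)$ recovers the module while $\dlim_m H^1$ vanishes. Two small points of divergence are worth noting. First, you try to reduce all the way to $L = A^0_n$ itself and then invoke ``d\'evissage'' to handle general $A^0_n$-modules; this is an unnecessary and mildly delicate detour, since $A^0_n$ need not have finite global dimension, so passing from $A^0_n$ to an arbitrary finite $A^0_n$-module requires an unbounded free resolution and a homotopy-colimit argument. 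The paper sidesteps this entirely: for any finite module $L$ over $A^0_{n_0}$, one has $e_m^*\Loc(L) \cong \underline{L}$ (a constant sheaf) once $n_0 \mid m$, and then the colimit computation is uniform in $L$ and requires no resolution of $L$ at all. Second, ``tameness of $I^0_n$ at $\infty$'' is a red herring here (it is the relevant input for Lemma~\ref{lem:mon-AS}, not this proposition); the mechanism that kills $H^1$ in the colimit is simply that after the trivialization $e_m^*\Loc(L) \cong \underline L$, the transition maps on $H^1(\bbG_{m,\bar k},\underline L) \cong L(-1)$ are multiplication by integers that grow $\ell$-adically, hence are eventually zero on the finite $\ell$-torsion module $L$. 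With the trivialization observation inserted in place of the proper-base-change computation and the d\'evissage step dropped, your argument becomes the paper's.
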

\begin{proof} We need to show that the unit of adjunction $L \to \fM \circ \Loc(L)$
is an isomorphism for $L \in D\Mod_\tau(A^0)$. 
We can assume that $k$ is algebraically closed.
Since $\Loc$ and $\fM$ both commute with filtered colimits, we may further suppose that 
$L$ is a finite module concentrated in degree $0$. 
Then there exists $n_0$ not divisible by $p$ such that the action of $A^0$ on 
$L$ factors through $R[\mu_{n_0}]$. If $n$ is a multiple of $n_0$ then $e_n^* \Loc(L) 
\cong \underline L$, where $\underline L$ is the constant sheaf on $\bbG_m$ with stalk $L$. 
The proposition now follows from the fact that for any finite abelian group $L$ of
order prime to $p$, one has $\dlim H^0\Gamma(\bbG_m,e_{n,*}e_n^* \underline L) \cong \underline L$ 
and $\dlim H^i\Gamma(\bbG_m,e_{n,*}e_n^*\underline L)=0$ for $i\ne 0$.
\end{proof}

\begin{cor} \label{cor:Locf}
The restriction of $\Loc$ induces an equivalence 
between the subcategory of $D^b\Mod_\tau(A^0)$ consisting of complexes whose 
cohomology sheaves have finite stalks and $D^b_\mon(\bbG_m)$. 
Taking hearts with respect to the standard $t$-structures of the above triangulated
categories, we get an isomorphism between the abelian category of 
sheaves of $A^0$-modules on $\spec k$ with finite stalk and the abelian category of 
monodromic sheaves on $\bbG_m$.
\end{cor}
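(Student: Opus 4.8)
The plan is to bootstrap from the full faithfulness of $\Loc$ proved in the preceding proposition; the only substantial point is the identification of the essential image, and I would split it into an equivalence on hearts followed by a d\'evissage to pass to bounded derived categories.

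\emph{Hearts.} If $L$ is a finite-stalk sheaf of $A^0$-modules on $\spec k$, it is a discrete module, hence killed by the kernel of $A^0\onto A^0_{n_0}$ for some $n_0$ coprime to $p$; arguing as in the proof of the preceding proposition, $e_{n_0}^*\Loc(L)$ is then the constant sheaf $\underline L$. Thus $\Loc(L)$ is lisse on $\bbG_{m,k}$ with geometric monodromy of order dividing $n_0$, so it is monodromic by the case $V=\bbG_m$ of Verdier's criterion recalled in the notation section. Conversely, a monodromic sheaf $\eF$ on $\bbG_m$ is, by that same criterion, lisse on $\bbG_{m,k}$ with geometric monodromy of order prime to $p$; using the section $\spec k\to\bbG_{m,k}$ given by $1\in\bbG_m(k)$ to split $\pi_1$, the stalk $\eF_1$ becomes a finite sheaf of $R$-modules on $\spec k$ carrying a compatible action of $\bbT$ that factors through some $\mu_n$, i.e.\ a finite-stalk sheaf of $A^0_n$-modules, and the defining property of $\Loc$ (the monodromy of $\gamma\in\bbT$ being multiplication by $\wtilde\gamma$) gives $\Loc(\eF_1)\cong\eF$. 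Together with the full faithfulness of the preceding proposition applied to objects placed in degree $0$, this shows $\Loc$ restricts to an equivalence between the abelian category of finite-stalk sheaves of $A^0$-modules on $\spec k$ and the abelian category of monodromic sheaves on $\bbG_m$.

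\emph{Bounded derived categories.} Since $\Loc:\Mod_\tau(A^0)\to\Sh(\bbG_m)$ is exact, the derived functor $\Loc:D\Mod_\tau(A^0)\to D(\bbG_m)$ is $t$-exact for the standard $t$-structures, so by the preceding proposition its restriction to $D^b\Mod_\tau(A^0)$ is fully faithful, and by $t$-exactness together with the heart statement it sends the subcategory of complexes with finite-stalk cohomology sheaves into $D^b_\mon(\bbG_m)$. For essential surjectivity I would induct on cohomological amplitude: given $N\in D^b_\mon(\bbG_m)$ with top nonzero cohomology in degree $b$, the truncation triangle $\tau_{<b}N\to N\to H^b(N)[-b]\to(\tau_{<b}N)[1]$ has $\tau_{<b}N$ again monodromic of smaller amplitude and $H^b(N)$ monodromic, hence both outer terms lie in the essential image by induction and the heart statement; the connecting morphism between their lifts is then determined uniquely by full faithfulness, and the cone of this lift maps isomorphically to $N$ because $\Loc$ is triangulated. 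This gives the equivalence of triangulated categories, and restricting to hearts reproduces the abelian equivalence stated.

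The step I expect to be the main obstacle is the heart-level identification — specifically, making the dictionary ``multiplication by $\wtilde\gamma$ $=$ monodromy of $\gamma$'' precise and compatible with the $\Gal(\bar k/k)$-action, so that finite-stalk sheaves of $A^0$-modules on $\spec k$ correspond \emph{exactly} to monodromic sheaves on $\bbG_{m,k}$ over a general base field (rather than just over $\bar k$, where it is transparent). Once this and the $t$-exactness of $\Loc$ are in hand, the extension to bounded derived categories is routine d\'evissage.
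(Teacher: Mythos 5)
Your proof is correct, and it fills in the argument that the paper leaves implicit (the corollary is stated without proof as a direct consequence of the preceding full faithfulness Proposition). The heart-level dictionary — finite-stalk $A^0$-modules on $\spec k$ correspond to lisse sheaves on $\bbG_m$ with tame monodromy of finite order prime to $p$, which by Verdier's criterion are exactly the monodromic sheaves — together with $t$-exactness of $\Loc$ and the standard dévissage on cohomological amplitude is precisely the intended reasoning, and your use of the $k$-rational point $1\in\bbG_m(k)$ to split $\pi_1(\bbG_{m,k})$ over $\Gal(\bar k/k)$ correctly handles the Galois-compatibility worry you flag.
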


The monoidal structure on $D\Mod_\tau(A^0)$ with respect to (derived) tensor product
over $A^0$ corresponds under $\Loc$ to convolution on $D(\bbG_m)$. 

\begin{lem} \label{lem:Aot}
For $L,K \in D\Mod_\tau(A^0)$ there exists a canonical isomorphism 
\[ \Loc(L) * \Loc(K) \cong \Loc(L \ot_{A^0} K ) (-1)[-2]. \]
\end{lem}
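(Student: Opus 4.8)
The plan is to reduce by d\'evissage to the universal case $L = K = A^0_n$ and then compute both sides explicitly on $\bbG_m$. First I would note that both $(L,K)\mapsto\Loc(L)*\Loc(K)$ and $(L,K)\mapsto\Loc(L\ot_{A^0}K)(-1)[-2]$ are triangulated in each variable and commute with filtered colimits: $\Loc$ because it is exact and a left adjoint, convolution because $m_!$ and $\bt$ preserve colimits, and $\ot_{A^0}$ trivially. Since every object of $\Mod_\tau(A^0)$ is a filtered colimit of finite modules, and a finite module — being a module over the finite ring $A^0_n$ for some $p\nmid n$ — is the cokernel of a map of finite free $A^0_n$-modules, the triangulated category $D\Mod_\tau(A^0)$ is generated under shifts and colimits by the objects $A^0_n$. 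Hence, after pinning down a canonical comparison morphism between the two functors — which one can assemble, compatibly in $n$, from the projection-formula map for $\theta(n)_!$ and the trivialization of $\theta(n)_!R$ used below — it will suffice to check that it is an isomorphism when $L=K=A^0_n$ for a single $n$.

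For the left-hand side, recall (from the proof of Lemma~\ref{lem:mon} and Corollary~\ref{cor:Locf}) that $\Loc(A^0_n)\cong I^0_n\cong e_{n!}R$, where $e_n:\bbG_m\to\bbG_m$ is the $n$-th power map. Using the identity $\theta(1)\circ(e_n\xt\id)=\theta(n)$ one obtains $I^0_n*N\cong\theta(n)_!\pr_2^*N$ for every $N\in D^b_c(\bbG_m)$; taking $N=\Loc(A^0_n)$, which satisfies $\theta(n)^*N\cong\pr_2^*N$, the projection formula gives
\[ \Loc(A^0_n)*\Loc(A^0_n)\;\cong\;\Loc(A^0_n)\ot_R\theta(n)_!R. \]
The coordinate change $(\lambda,v)\mapsto(\lambda,\lambda^n v)$ identifies $\theta(n)$ with the second projection $\bbG_m\xt\bbG_m\to\bbG_m$, so $\theta(n)_!R$ is the constant complex on $R\Gamma_c(\bbG_m,R)$; and there is a canonical splitting $R\Gamma_c(\bbG_m,R)\cong R[-1]\oplus R(-1)[-2]$, obtained by dualizing the splitting $R\Gamma(\bbG_m,R)\cong R\oplus R(-1)[-1]$ determined by the unit section together with the tautological Kummer class generating $H^1(\bbG_m,R(1))\cong R$. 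Thus the left-hand side is isomorphic to $I^0_n[-1]\oplus I^0_n(-1)[-2]$.

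For the right-hand side, I would compute $A^0_n\ot_{A^0}A^0_n$. The kernel of $A^0\onto A^0_n$ is the principal ideal generated by the non-zero-divisor $\wtilde t^n-1$; as a $\Gal(\bar k/k)$-stable ideal it is free of rank one and canonically isomorphic to $A^0(1)$, since $\Gal(\bar k/k)$ acts on $\wtilde t^n-1$ through the cyclotomic character modulo $(\wtilde t-1)$ (compare the discussion of $A^1$ in \S\ref{sect:Amod}). Hence $0\to A^0(1)\xrightarrow{\,\cdot(\wtilde t^n-1)\,}A^0\to A^0_n\to 0$ is a free resolution of $A^0_n$, and tensoring over $A^0$ with $A^0_n$, on which $\wtilde t^n-1$ acts by $0$, gives $A^0_n\ot_{A^0}A^0_n\cong A^0_n\oplus A^0_n(1)[1]$. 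Applying $\Loc$ and the twist $(-1)[-2]$ yields $I^0_n(-1)[-2]\oplus I^0_n[-1]$, which matches the left-hand side.

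The hard part will be the bookkeeping that identifies the canonical comparison morphism with this isomorphism: one must check that the canonical splitting of $R\Gamma_c(\bbG_m,R)$ corresponds, after the overall shift and Tate twist, to the canonical decomposition $A^0_n\ot_{A^0}A^0_n\cong A^0_n\oplus A^0_n(1)[1]$ — tracking carefully the twist $(1)$ on $\operatorname{Tor}_1^{A^0}(A^0_n,A^0_n)\cong A^0_n(1)$ against the twist $R(-1)$ appearing in $H^2_c(\bbG_m,R)$. Once these twists are reconciled, the d\'evissage of the first paragraph promotes the isomorphism to arbitrary $L,K$.
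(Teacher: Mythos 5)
Your computation for $L=K=A^0_n$ is essentially correct (and more explicit than the paper, which just calls it ``an easy computation''): the identification $\Loc(A^0_n)*\Loc(A^0_n)\cong I^0_n\ot_R\theta(n)_!R$ via the projection formula and the coordinate change making $\theta(n)$ a projection, the canonical splitting of $R\Gamma_c(\bbG_m,R)$, the resolution of $A^0_n$ by $(\wtilde t^n-1)\subset A^0$, and the identification $\operatorname{Tor}_1^{A^0}(A^0_n,A^0_n)\cong A^0_n(1)$ (your claim that the ideal $(\wtilde t^n-1)$ is canonically $A^0(1)$ is a minor abuse — the Galois cocycle $\sigma\mapsto 1+\wtilde t^n+\cdots+\wtilde t^{n(\chi(\sigma)-1)}$ only reduces to $\chi(\sigma)$ after tensoring with $A^0_n$ — but the Tor computation is right). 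The d\'evissage principle is also sound in spirit, though you should invoke the finite cohomological amplitude of both functors (as the paper does) to make the reduction from finite $A^0_n$-modules to $A^0_n$ itself rigorous, since finite free resolutions over $A^0_n$ need not terminate.

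The genuine gap is that you never construct the canonical comparison morphism, and your proof cannot start without it. You verify in paragraphs two and three that the two sides are \emph{abstractly} isomorphic when $L=K=A^0_n$, but an abstract isomorphism for one value of $L,K$ does not propagate by d\'evissage; one needs a natural transformation to feed into the five lemma. You flag this yourself (``the hard part will be the bookkeeping''), but the proposed recipe — assembling a map ``compatibly in $n$'' from the projection-formula map for $\theta(n)_!$ — is tied to $L,K$ being $A^0_n$ and does not obviously produce a morphism of bifunctors on all of $D\Mod_\tau(A^0)$. The paper solves this cleanly: apply the external version $\Loc_{\bbG_m\xt\bbG_m}$ to the tautological map $L\ot_R K\to L\ot_{A^0}K$ to get $\Loc(L)\bt\Loc(K)\to m^*\Loc(L\ot_{A^0}K)$, use smoothness of $m$ to rewrite $m^*\cong m^!(-1)[-2]$, and invoke $(m_!,m^!)$-adjunction. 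This is where both the canonicality and the twist $(-1)[-2]$ come from, and it is the main content of the lemma — the part your proposal leaves open.
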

\begin{proof} Consider the functor $\Loc_{\bbG_m \xt \bbG_m}: D\Mod_\tau(A^0 \what\ot_R A^0)
\to D(\bbG_m \xt \bbG_m)$, which is defined similarly to the above functor 
$\Loc = \Loc_{\bbG_m}$. Applying $\Loc_{\bbG_m \xt \bbG_m}$ to 
the natural map $L \ot_R K \to L \ot_{A^0} K$, we get a map 
$\Loc(L) \bt \Loc(K) \to m^*\Loc(L \ot_{A^0} K)$ in $D(\bbG_m \xt \bbG_m)$.
Recall that since $m$ is smooth, $m^*\Loc(L \ot_{A^0}K) \cong m^!\Loc(L\ot_{A^0}K)(-1)[-2]$.
Therefore the $(m_!,m^!)$-adjunction induces a morphism
\[ \Loc(L) * \Loc(K) \to \Loc(L\ot_{A^0}K) (-1)[-2]. \]
To check this is an isomorphism, we can assume $k$ is algebraically closed 
and take $L=K=A^0_n$ for $p\nmid n$ since
the functors on both sides commute with filtered colimits and have finite cohomological amplitude.
Under these assumptions, the isomorphism is an easy computation.
\end{proof}

\begin{cor} \label{cor:Ii+j}
There is a canonical projective system of isomorphisms 
\[ I^i * I^j_n \cong I^{i+j}_n(-1)[-2] \]
for $p\nmid n$ and any integers $i$ and $j$. 
Consequently there is an isomorphism of pro-objects 
\[ I^i * I^j \cong I^{i+j}(-1)[-2]. \]
\end{cor}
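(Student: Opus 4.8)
The plan is to deduce Corollary~\ref{cor:Ii+j} from Lemma~\ref{lem:Aot} by the standard dictionary between pro-sheaves on $\bbG_m$ and $A^0$-modules. First I would identify each local system $I^i_n$ with $\Loc$ applied to the $A^0$-module $A^i_n = A^i \ot_{A^0} A^0_n$: indeed $\Loc(A^i_n)$ has fiber $A^i_n$ at $1 \in \bbG_m(k)$ with monodromy acting by $\wtilde\gamma$, which is exactly the defining description of $I^i_n$. Granting this, Lemma~\ref{lem:Aot} gives
\[ I^i_m * I^j_n \cong \Loc(A^i_m) * \Loc(A^j_n) \cong \Loc(A^i_m \ot_{A^0} A^j_n)(-1)[-2], \]
so the whole computation is reduced to the purely algebraic identity $A^i_m \ot_{A^0} A^j_n \cong A^{i+j}_{mn}$ (or, after fixing $m$ appropriately, $A^i \ot_{A^0} A^j_n \cong A^{i+j}_n$), together with compatibility with the transition maps as $n$ varies.

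The key algebraic point is that $A^i = (\wtilde t - 1)^i A^0$ is an invertible $A^0$-module — it is the $i$-th power of the invertible ideal generated by the non-zero-divisor $\wtilde t - 1$ inside the localization $A = (A^0)_{\wtilde t - 1}$ — so $A^i \ot_{A^0} A^j \cong A^{i+j}$ canonically, with the isomorphism given by multiplication $(\wtilde t-1)^i x \ot (\wtilde t - 1)^j y \mapsto (\wtilde t - 1)^{i+j} xy$. Moreover $A^i$ is flat (even projective, being a direct summand after tensoring suitably, or simply because $\wtilde t - 1$ acts invertibly on $A$ so $A^i$ is an $A^0$-submodule of the flat module $A$ on which things behave well), so tensoring with $A^j_n = A^j \ot_{A^0} A^0_n$ gives $A^i \ot_{A^0} A^j_n \cong A^{i+j} \ot_{A^0} A^0_n = A^{i+j}_n$ with no higher Tor terms; in particular the derived tensor product agrees with the underived one and $\Loc$ of it is again a single local system. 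Tracking the Galois action, all these isomorphisms are $\Gal(\bar k/k)$-equivariant because they are built from $\wtilde t - 1$ and the natural maps, even though the choice of $t$ itself is not canonical (the combination $(\wtilde t-1)^i \ot (\wtilde t-1)^j \mapsto (\wtilde t-1)^{i+j}$ is insensitive to rescaling $t$ in the way Remark~\ref{rem:Itorsor} is sensitive). This yields the first displayed isomorphism $I^i * I^j_n \cong I^{i+j}_n(-1)[-2]$.

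For the projective-system statement I would check that these isomorphisms are compatible with the transition maps $I^{i+j}_{n'} \to I^{i+j}_n$ for $n \mid n'$, which on the $A^0$-module side are the surjections $A^{i+j}_{n'} \onto A^{i+j}_n$ induced by $A^0_{n'} \onto A^0_n$; this is immediate from functoriality of $\ot_{A^0}$. Passing to the projective limit over $p \nmid n$ then gives $I^i * I^j \cong I^{i+j}(-1)[-2]$ as pro-objects, using that convolution commutes with the relevant limits (here one invokes, as in the proof of Lemma~\ref{lem:Aot}, that the functors have bounded cohomological amplitude so the $\prolim$ is well-behaved, and the analog of Lemma~\ref{lem:mon} for the essential constancy of the relevant pro-objects). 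The main obstacle I anticipate is not any single hard estimate but rather the bookkeeping: making sure the invertibility isomorphism $A^i \ot_{A^0} A^j \cong A^{i+j}$ is genuinely canonical and Galois-equivariant, that flatness of $A^i$ justifies passing freely between derived and underived tensor products so that $\Loc$ applied to the tensor product is again concentrated in a single degree, and that everything is strictly compatible with the pro-system transition maps — so that the final pro-object isomorphism is canonical in the stated sense.
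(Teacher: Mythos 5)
Your overall route is the same as the paper's — translate via $\Loc$ and Lemma~\ref{lem:Aot} to a computation of tensor products of $A^0$-modules — but there is a genuine gap in the step where you dispose of higher Tor. The functor $\Loc$ is defined only on \emph{discrete} $A^0$-modules, so $I^i$ is not $\Loc(A^i)$; it is the pro-object $\prolim_m \Loc(A^i_m)$. Consequently the object Lemma~\ref{lem:Aot} hands you is $\prolim_m \bigl(A^i_m \ot^{\bbL}_{A^0} A^j_n\bigr)$, built from the finite truncations $A^i_m$, not from the profinite module $A^i$. Your flatness argument is correct for $A^i$ itself (it is free of rank one over $A^0$), but $A^i_m \cong A^0_m$ is a torsion $A^0$-module and is certainly not flat; $A^i_m \ot^{\bbL}_{A^0} A^j_n$ genuinely has higher Tor (already in the toy model $A^0 = \bbZ_\ell$, $A^0_m = \bbZ/\ell^m$ one has $\operatorname{Tor}_1(\bbZ/\ell^m,\bbZ/\ell^n)\ne 0$). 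So ``no higher Tor because $A^i$ is free'' does not justify that the pro-object is concentrated in degree zero. (Also, your first guess $A^i_m \ot_{A^0} A^j_n \cong A^{i+j}_{mn}$ is wrong — for coprime $m,n$ the tensor product collapses to $A^{i+j}_1$ — though the hedged version with $n\mid m$ is what one wants.)

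What actually saves the day, and what the paper invokes, is that the higher Tor terms become \emph{pro-zero} as $m$ grows. The paper gets this for free from Remark~\ref{rem:Itorsor} together with Lemma~\ref{lem:mon}: over $\bar k$ one has $I^i \cong I^0$ non-canonically, and $I^0 * I^j_n \cong I^j_n(-1)[-2]$ is concentrated in a single degree, so the pro-object $\prolim_m \Loc(A^i_m \ot^{\bbL}_{A^0} A^j_n)$ is essentially constant and lives in degree zero. After that one only needs the degree-zero computation $H^0(A^i_m \ot_{A^0} A^j_n) = A^{i+j}_n$ for $n \mid m$, which is immediate and canonical. You mention Lemma~\ref{lem:mon} in passing at the very end as a ``well-behavedness'' input for the $\prolim$, but you do not actually use it to eliminate the higher Tor; you use the flatness of $A^i$, which is the wrong object. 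Replacing your flatness argument with the Remark~\ref{rem:Itorsor} + Lemma~\ref{lem:mon} degree-concentration argument would close the gap and reproduce the paper's proof. The rest of what you say — the multiplication isomorphism $A^i \ot A^j \to A^{i+j}$ being canonical and Galois-equivariant, compatibility with the transition maps — is fine.
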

\begin{proof} Fix $p \nmid n$. 
	By Lemma \ref{lem:Aot}, the first isomorphism is equivalent to an isomorphism
\[ \prolim_{p\nmid m} A^i_m \ot_{A^0} A^j_n \cong A^{i+j}_n \]
as pro-objects in $D\Mod_\tau(A^0)$. 
Remark \ref{rem:Itorsor} and Lemma \ref{lem:mon} imply that it suffices to 
consider the cohomology in degree $0$, i.e., we consider the non-derived tensor product
on the LHS. Then $H^0(A^i_m \ot_{A^0} A^j_n) \cong A^{i+j}_n$ for $n \mid m$ by
definition. These isomorphisms are evidently compatible with changes in $n$, so the
rest of the corollary follows.
\end{proof}

\bibliographystyle{alpha}
\bibliography{topic}

\end{document}